\def\theglossary{\@restonecoltrue\if@twocolumn\@restonecolfalse\fi
\columnseprule\z@ \columnsep 35\p@
\let\@makeschapterhead\indexchap
\@xp\chapter\@xp*\@xp{\glossaryname}%
\thispagestyle{plain}%
\let\item\@idxitem
\parindent\z@  \parskip\z@\@plus.3\p@\relax
\footnotesize}
\def\glossaryname{Notation Index}
\definecolor{ceruleanblue}{rgb}{0.16, 0.32, 0.75}
\renewcommand\theenumi{\roman{enumi}}
\newcommand\note[1]%
\def\today{\number\year-\ifnum\month<10
0\fi\number\month-\ifnum\day<10 0\fi\number\day}
\def\hour{\ifnum\count253<10
0\number\count253\else\number\count253\fi}
\def\minute{\ifnum\count254<10
0\number\count254\else\number\count254\fi}
\newcounter{alinea}
\numberwithin{alinea}{section}
\newenvironment{alinea}[1][]{\par\smallskip\noindent%
\refstepcounter{alinea}\thealinea.~\ifx\newenvironment#1\newenvironment%
\else\textbf{#1.}~\fi}{\smallskip}
\newenvironment{warning}[1][]{
\begin{trivlist}\item\noindent%
      \begingroup\hangindent=2pc\hangafter=-2%
      \clubpenalty=10000%
      \hbox to0pt{\hskip-\hangindent\manfntsymbol{127}\hfill}\ignorespaces%
      \refstepcounter{alinea}\thealinea.%
      \let\p@@r=\par\def\p@r{\p@@r\hangindent=0pc}\let\par=\p@r}%
    {\hspace*{\fill}%$\lrcorner$
\endgraf\endgroup\end{trivlist}}
\numberwithin{equation}{alinea}
\let\oldtagform@\tagform@
\renewcommand\tagform@[1]{\maketag@@@{\ignorespaces#1\unskip\@@italiccorr.}}
\renewcommand{\eqref}[1]{\textup{\oldtagform@{\ref{#1}}}}
\newtheorem{theorem}[alinea]{Theorem}
\newtheorem{lemma}[alinea]{Lemma}
\newtheorem{corollary}[alinea]{Corollary}
\newtheorem{subtheorem}[equation]{Theorem}
\newtheorem{sublemma}[equation]{Lemma}
\newtheorem{subproposition}[equation]{Proposition}
\newtheorem{subcorollary}[equation]{Corollary}
\theoremstyle{definition}
\newtheorem{subexample}[equation]{Example}
\newcommand\lie{\mathfrak}
\renewcommand{\a}{\lie{a}}
\newcommand{\g}{\lie{g}}
\newcommand{\h}{\lie{h}}
\newcommand{\m}{\lie{m}}
\newcommand{\n}{\lie{n}}
\newcommand{\p}{\lie{p}}
\newcommand{\q}{\lie{q}}
\renewcommand{\t}{\lie{t}}
\newcommand\bb[1]{{\text{\bf#1}}}
\newcommand\Z{\bb{Z}} 
\newcommand\Q{\bb{Q}}
\newcommand\R{\bb{R}} 
\newcommand\C{\bb{C}}
\newcommand\T{\bb{T}}
\newcommand\ca{\mathscr}
 \DeclareFontFamily{OT1}{pzc}{}
 \DeclareFontShape{OT1}{pzc}{m}{it}{<-> s * [1.100] pzcmi7t}{}
 \DeclareMathAlphabet{\mathpzc}{OT1}{pzc}{m}{it}
\newcommand\func[1]{\operatorname{\mathrm{#1}}}
\newcommand\Ad{\func{Ad}}
\newcommand\coker{\func{coker}}
\newcommand\id{\func{id}}
\newcommand\im{\func{im}}
\newcommand\pr{\func{pr}}
\newcommand\Lie{\func{Lie}}
\newcommand\group[1]{{\text{\bf#1}}}
\newcommand\U{\group{U}}
\newcommand\abs[1]{\lvert#1\rvert}
\newcommand\inner[1]{\langle#1\rangle}
\newcommand\qu[1][\kern.3ex]{/\kern-.7ex/_{\kern-.4ex#1}}
\newcommand\bigqu[1][\,\,]{\big/\kern-.85ex\big/_{\!\!#1}}
\newcommand\powl{[\kern-.3ex[}
\newcommand\powr{]\kern-.3ex]}
\newcommand\bigpowl{\bigl[\kern-.6ex\bigl[}
\newcommand\bigpowr{\bigr]\kern-.6ex\bigr]}
\newcommand\sur{\mathrel{\to\kern-1.8ex\to}}
\newcommand\iso{\mathrel{\hookrightarrow\kern-1.8ex\to}}
\newcommand\longto{\longrightarrow}
\newcommand\longsur{\mathrel{\longrightarrow\kern-1.8ex\to}}
\newcommand\Edh{{D\mkern-15mu\text{\raise0.1em\hbox{--}}\mkern8mu}}
\newcommand\zerodots%
\newcommand\bigzerodots%
\renewcommand\subset{\subseteq}
\renewcommand\supset{\supseteq}
\newcommand\bas{{\mathrm{bas}}}
\begin{document}

%%%%%%%%%%%%%%%%%%%%%%%%%%%%%%%%%%%%%%%%%%%%%%%%%%%%%%%%%%%%%%%%%%%%%%%%
%%%%%%%%%%%%%%%%%%%%%%%%%%%%%%%%%%%%%%%%%%%%%%%%%%%%%%%%%%%%%%%%%%%%%%%%

\title{Convexity properties of presymplectic moment maps}

\author{Yi Lin}

\email{yilin@georgiasouthern.edu}

\address{Department of Mathematical Sciences, Georgia Southern
  University, Statesboro, GA 30460 USA}

\author{Reyer Sjamaar}

\email{sjamaar@math.cornell.edu}

\address{Department of Mathematics, Cornell University, Ithaca, NY
14853-4201, USA}

%\subjclass[2010]{}

\date\today

%%%%%%%%%%%%%%%%%%%%%%%%%%%%%%%%%%%%%%%%%%%%%%%%%%%%%%%%%%%%%%%%%%%%%%%%
%%%%%%%%%%%%%%%%%%%%%%%%%%%%%%%%%%%%%%%%%%%%%%%%%%%%%%%%%%%%%%%%%%%%%%%%

\begin{abstract}
The convexity and Morse-theoretic properties of moment maps in
symplectic geometry typically fail for presymplectic manifolds.  We
find a condition on presymplectic moment maps that prevents these
failures.  Our result applies for instance to Prato's quasifolds and
to Hamiltonian actions on contact manifolds and cosymplectic
manifolds.
\end{abstract}

%%%%%%%%%%%%%%%%%%%%%%%%%%%%%%%%%%%%%%%%%%%%%%%%%%%%%%%%%%%%%%%%%%%%%%%%
%%%%%%%%%%%%%%%%%%%%%%%%%%%%%%%%%%%%%%%%%%%%%%%%%%%%%%%%%%%%%%%%%%%%%%%%

\maketitle

\tableofcontents

%%%%%%%%%%%%%%%%%%%%%%%%%%%%%%%%%%%%%%%%%%%%%%%%%%%%%%%%%%%%%%%%%%%%%%%%
%%%%%%%%%%%%%%%%%%%%%%%%%%%%%%%%%%%%%%%%%%%%%%%%%%%%%%%%%%%%%%%%%%%%%%%%

%%%%%%%%%%%%%%%%%%%%%%%%%%%%%%%%%%%%%%%%%%%%%%%%%%%%%%%%%%%%%%%%%%%%%%%%
\section{Introduction}
%%%%%%%%%%%%%%%%%%%%%%%%%%%%%%%%%%%%%%%%%%%%%%%%%%%%%%%%%%%%%%%%%%%%%%%%

\begin{alinea}
This paper deals with a topic in transverse geometry: in the context
of a manifold $X$ with a (regular) foliation $\ca{F}$ and a symplectic
structure transverse to the foliation we develop analogues of a few
basic results of symplectic geometry.  While statements such as the
Darboux theorem remain valid, one quickly discovers counterexamples to
naive parallels of the convexity theorems of Hamiltonian compact Lie
group actions proved by Atiyah \cite{atiyah;convexity-commuting},
Guillemin and Sternberg \cite{guillemin-sternberg;convexity;;1982},
and Kirwan \cite{kirwan;convexity-III}.  Some such counterexamples are
catalogued in Section \ref{section;example}.  Our main contribution is
to state a condition under which these convexity theorems are true in
the transversely symplectic setting.  The condition, which we call
\emph{cleanness} of the group action, and special cases of which have
been found earlier by other authors, such as He
\cite{he;odd-dimensional} and Ishida \cite{ishida;transverse-kaehler},
is that there should exist an ideal of the Lie algebra of the group,
called the \emph{null ideal}, which at every point of the manifold
spans the tangent space of the intersection of the group orbit with
the leaf of $\ca{F}$.

We state and prove our convexity theorem in Section
\ref{section;convex}.  It is formulated in terms of presymplectic
structures instead of the equivalent language of transversely
symplectic foliations.  The adjective ``presymplectic'' has
conflicting meanings in the current literature.  We will use it for a
\emph{closed} $2$-form of \emph{constant rank}.
\end{alinea}

\begin{alinea}
It was proved by Atiyah \cite{atiyah;convexity-commuting} and
Guillemin and Sternberg \cite{guillemin-sternberg;convexity;;1982}
that the components of a symplectic moment map are Morse-Bott
functions, an observation that lies at the heart of all subsequent
developments in equivariant symplectic geometry.  In the hope of
opening the way to similar applications to the topology of
presymplectic Lie group actions we show in Section \ref{section;morse}
that under the cleanness assumption the components of a presymplectic
moment map are Morse-Bott as well.  In Section \ref{section;example}
we discuss some examples of the convexity theorem, including
orbifolds, contact manifolds and Prato's quasifolds
\cite{prato;non-rational-symplectic}.
\end{alinea}

\begin{alinea}
To what extent does the moment polytope of the action of a compact Lie
group $G$ on a transversely symplectic manifold $X$ depend only on the
leaf space $X/\ca{F}$?  If the foliation $\ca{F}$ is fibrating, the
leaf space is a symplectic manifold and the moment polytope of $X$ is
the same as that of $X/\ca{F}$.  In this case the moment polytope of
$X$ is therefore completely determined by $X/\ca{F}$.  But if the null
foliation is not fibrating, the leaf space is often a messy
topological space from which one cannot hope to recover the polytope.
The structure of the leaf space can be enriched to that of an \'etale
symplectic stack $\ca{X}$ (as in Lerman and Malkin
\cite{lerman-malkin;deligne-mumford}, but without the Hausdorff
condition), which is equipped with a Hamiltonian action of a stacky
Lie group $\ca{G}$ (namely the ``quotient'' of $G$ by the in general
non-closed normal subgroup generated by the null ideal).  We will show
in a future paper \cite{hoffman-lin-sjamaar;hamiltonian-stack} that
the moment polytope, reinterpreted as a stacky polytope, is an
intrinsic invariant of the $\ca{G}$-action on $\ca{X}$.
\end{alinea}

\begin{alinea}
The authors are grateful to Zuoqin Wang and Jiang-Hua Lu for helpful
discussions.  We draw the reader's attention to Ratiu and Zung's paper
\cite{ratiu-zung;presymplectic}, which overlaps with ours and appeared
on the arXiv at nearly the same time.  We have omitted from this
revised version some material developed more fully by them.
\end{alinea}  

%%%%%%%%%%%%%%%%%%%%%%%%%%%%%%%%%%%%%%%%%%%%%%%%%%%%%%%%%%%%%%%%%%%%%%%%
\section{Presymplectic convexity}\label{section;convex}
%%%%%%%%%%%%%%%%%%%%%%%%%%%%%%%%%%%%%%%%%%%%%%%%%%%%%%%%%%%%%%%%%%%%%%%%

\begin{alinea}\label{alinea;presymplectic}
A \emph{presymplectic manifold} is a paracompact $C^\infty$-manifold
equipped with a closed $2$-form of constant rank.  A \emph{Hamiltonian
  action} on a presymplectic manifold $(X,\omega)$ consists of two
pieces of data: a smooth action of a Lie group $G$ on $X$ and a smooth
\emph{moment map} $\Phi\colon X\to\g^*$.  Here $\g=\Lie(G)$ denotes
the Lie algebra of $G$ and $\g^*$ the dual vector space of $\g$.
These data are subject to the following requirements: the $G$-action
should preserve the presymplectic form (i.e.\ $g^*\omega=\omega$ for
all $g\in G$) and $\Phi$ should be an equivariant map satisfying
$d\Phi^{\xi}=\iota(\xi_X)\omega$ for all $\xi\in\g$.  Here $\xi_X$
denotes the vector field on $X$ induced by $\xi\in\g$, and
$\Phi^{\xi}$ is the function on $X$ defined by
$\Phi^\xi(x)=\inner{\Phi(x),\xi}$ for $x\in X$.

In the remainder of this section $X$ will denote a fixed manifold with
a presymplectic form $\omega$ and $G$ will denote a fixed
\emph{compact connected} Lie group acting on $X$ in a Hamiltonian
fashion with moment map $\Phi$.  We will refer to $X$ as a
\emph{presymplectic Hamiltonian $G$-manifold}.  As far as we know,
this notion was first introduced (under a different name) by Souriau
\cite[\S\,11]{souriau;structure-dynamical}.  The main goal of this
section is to establish the following theorem.  This result is very
similar to the symplectic case, but the presence of the null foliation
causes some interesting new phenomena.
\end{alinea}

\begin{theorem}[presymplectic convexity theorem]\label{theorem;convex}
Assume that the $G$-action on $X$ is clean.  Assume also that the
manifold $X$ is connected and that the moment map $\Phi\colon
X\to\g^*$ is proper.  Choose a maximal torus $T$ of $G$ and a closed
Weyl chamber $C$ in $\t^*$, where $\t=\Lie(T)$, and define
$\Delta(X)=\Phi(X)\cap C$.
\begin{enumerate}
\item\label{item;open}
The fibres of $\Phi$ are connected and $\Phi\colon X\to\Phi(X)$ is an
open map.
\item\label{item;convex}
$\Delta(X)$ is a closed convex polyhedral set.
\item\label{item;rational}
$\Delta(X)$ is rational if and only if the null subgroup $N(X)$ of $G$
  is closed.
\end{enumerate}
\end{theorem}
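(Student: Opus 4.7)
The plan is to parallel the classical Atiyah--Guillemin--Sternberg proof of symplectic convexity, using as the main new input the Morse--Bott property of the components $\Phi^\xi$ established in Section \ref{section;morse} under cleanness. I shall focus on the torus case $G=T$; the general compact case should follow by Kirwan's reduction, which takes as inputs only the torus version of the result together with properness and connectedness of fibres. A preliminary observation is that, by cleanness, $\xi_X$ lies in $\ker\omega$ for every $\xi\in\n$, so $d\Phi^\xi=\iota(\xi_X)\omega=0$ and $\Phi^\xi$ is constant on the connected manifold $X$. Hence $\Phi(X)$ is contained in a single affine subspace $A\subset\t^*$ whose direction is $(\n\cap\t)^\perp$.

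For part (i), I would establish connectedness of fibres by induction on $\dim T$, following Atiyah: if $T'\subset T$ has codimension one with connected $\Phi_{T'}$-fibres, and $\xi\in\t$ generates the quotient $\t/\t'$, then the restriction of $\Phi^\xi$ to each fibre $\Phi_{T'}^{-1}(\alpha)$ is a proper Morse--Bott function whose critical submanifolds have even index and coindex, so its level sets are connected; this forces the fibres of $\Phi_T=(\Phi_{T'},\Phi^\xi)$ to be connected. Openness of $\Phi\colon X\to\Phi(X)$ then follows from the equivariant presymplectic local normal form at each point, which models $\Phi$ locally on a standard linear torus moment map on a symplectic transversal times a null factor, and shows that $\Phi$ carries each invariant open set to a relatively open subset of $A$.

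For part (ii), the same local normal form furnishes local convexity: a sufficiently small $T$-invariant neighborhood of $x\in X$ maps under $\Phi$ onto a neighborhood of $\Phi(x)$ in the intersection of $A$ with a convex polyhedral cone, whose edges are spanned by the weights of the $T_x$-representation on a transverse slice at $x$. Combined with connectedness of fibres, openness, and properness of $\Phi$, a standard local-to-global convexity argument of Condevaux--Dazord--Molino or Hilgert--Neeb--Plank type then promotes this local convexity of $\Delta(X)$ to global convexity, and local finiteness of facets to the polyhedral property.

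For part (iii), every facet hyperplane of $\Delta(X)$ arises as the intersection of $A$ with a weight hyperplane of $T$ in $\t^*$, and weight hyperplanes are rational. Hence rationality of $\Delta(X)$ is equivalent to rationality of the affine subspace $A$, i.e.\ to rationality of $\n\cap\t$, equivalently to $N(X)\cap T$ being closed in $T$; the structure theory of compact connected Lie groups then identifies this with closedness of $N(X)$ in $G$. The chief obstacle in carrying out this plan is the equivariant local normal form under cleanness: without cleanness both local convexity and polyhedrality can fail, as the examples in Section \ref{section;example} show, so the normal form must crucially exploit that the null ideal $\n$ captures exactly those group directions which lie in the null foliation, and must be precise enough that the local facet directions are identified with isotropy weights for the rationality analysis to go through.
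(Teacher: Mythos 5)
Your overall architecture is plausible, but it defers exactly the step that constitutes the bulk of the paper's proof, and two of the global reductions you invoke do not go through as stated. The ``equivariant presymplectic local normal form,'' which you yourself flag as the chief obstacle, is not available off the shelf: the paper manufactures it by first proving a foliated slice theorem (Theorem \ref{theorem;transversal}) that produces a $G$-invariant presymplectic transversal $Y$ at $x$ --- which cannot be an honest transverse section unless the action is leafwise nontangent, so one settles for $T_xY\cap T_x\ca{F}\subset T_x(G\cdot x)$ and then uses cleanness to show that the induced action on $Y$ is leafwise transitive and that the null ideal sheaf is locally constant --- and then symplectizing $Y$ via Gotay's coisotropic embedding and checking (Proposition \ref{proposition;symplectization}) that near $x$ the image of $\Phi$ is precisely the slice of the symplectization's moment image by the affine subspace $\phi(x)+\n_x^\circ$. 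Only after this does the symplectic local convexity theorem deliver the local cones $\Delta_x\cap\bigl(\phi(x)+\n_x^\circ\bigr)$, their rationality (essential for part (iii)), openness, and local fibre connectedness. Without this construction your openness claim, your local convexity, and your identification of facet normals with isotropy weights are all unsupported; note also that the Morse--Bott theorem you plan to use as input itself rests on the same slice theorem via the equivariant Darboux corollary. (A smaller slip: $\xi_X\in\ker\omega$ for $\xi\in\n$ is the \emph{definition} of the null ideal, not a consequence of cleanness; what cleanness buys, via Corollary \ref{corollary;clean}, is that all stalks $\n_x$ coincide with $\n(X)$, which is what makes the affine subspace global.)

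The two global reductions are also gapped. In the Atiyah induction you need the restriction of $\Phi^\xi$ to a fibre $\Phi_{T'}^{-1}(\alpha)$ to have connected level sets, but such a fibre is in general not a smooth manifold and the Morse--Bott property of $\Phi^\xi$ on $X$ (Theorem \ref{theorem;morse-bott}) does not simply restrict; Atiyah's original argument handles this with a separate analysis that would have to be redone in the presymplectic setting. And ``Kirwan's reduction'' of the nonabelian case to the torus case is not a formal reduction --- Kirwan's proof is a substantial independent argument. The paper sidesteps both problems at once by feeding the local cones directly into the Hilgert--Neeb--Plank local-to-global principle applied to the composite $\phi\colon X\to C$, which yields fibre connectedness, openness and global convexity simultaneously for arbitrary compact connected $G$; if you intend to invoke that principle (as you mention), the Atiyah induction is redundant, and if you do not, the nonabelian case is unaddressed. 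Finally, for part (iii) in the nonabelian case the affine span of $\Delta(X)$ inside $\t^*$ is not simply $(\n(X)\cap\t)^\circ$; the paper's converse implication instead projects the moment image to the centre $\lie{z}^*$ and then applies Corollary \ref{corollary;closed-rational}, and some such device is needed in your argument as well.
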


This statement contains several undefined terms, which we proceed to
explain in \ref{alinea;convex}--\ref{alinea;clean} below.  In
\ref{alinea;atiyah}--\ref{alinea;coisotropic} we make further
preliminary comments and state a sequence of auxiliary results.  The
proof of the theorem is in \ref{alinea;proof}.  Examples are presented
in Section \ref{section;example}.

\begin{alinea}\label{alinea;convex}
A \emph{convex polyhedral set} in a finite-dimensional real vector
space is an intersection of a locally finite number of closed
half-spaces.  A \emph{convex polyhedron} is an intersection of a
finite number of closed half-spaces.  A \emph{convex polytope} is a
bounded convex polyhedron.  If the manifold $X$ is compact, the set
$\Delta(X)$ defined in Theorem \ref{theorem;convex} is a convex
polytope.
\end{alinea}

\begin{warning}\label{warning;rational}
Let $E$ be a finite-dimensional real vector space equipped with a
$\Q$-structure.  We call a convex polyhedral subset of $E$
\emph{rational} if it can be written as the locally finite
intersection of half-spaces, each of which is given by an inequality
of the form $\inner{\eta,\cdot}\ge a$ with rational normal vector
$\eta\in E^*(\Q)$ and $a\in\R$.  This is nonstandard terminology.  The
more usual definition requires the scalars $a$ to be rational as well.
Our notion of rationality is equivalent to the normal fan of the
polyhedral set being rational.

We call a convex polyhedral subset of $\t^*$ rational if it is
rational with respect to the $\Q$-structure
$\Q\otimes_\Z\lie{X}^*(T)$.  Here $\lie{X}^*(T)\subset\t^*$ denotes
the character lattice of the torus $T$, i.e.\ the lattice dual to the
exponential lattice $\lie{X}_*(T)=\ker(\exp\colon\t\to T)$.
\end{warning}  

\begin{alinea}[The null ideal sheaf]\label{alinea;null}
The subbundle $\ker(\omega)$ of the tangent bundle $TX$ is involutive
(see e.g.\ \cite[\S\,3]{block-getzler;quantization-foliations}) and
therefore, by Frobenius' theorem, integrates to a (regular) foliation
$\ca{F}=\ca{F}_X$, called the \emph{null foliation} of $\omega$.  We
call $\ker(\omega)$ the \emph{tangent bundle} of the foliation and
denote it usually by $T\ca{F}$.  The leaves of $\ca{F}$ are not
necessarily closed; indeed the case where they are \emph{not} closed
is the focus of our attention.

Let $U$ be an open subset of $X$.  Define $\n(U)$ to be the Lie
subalgebra of $\g$ consisting of all $\xi\in\g$ with the property that
the $1$-form $d\Phi^\xi=\iota(\xi_X)\omega$ vanishes on the
$G$-invariant open set $G\cdot U$.  Equivalently, $\xi$ is in $\n(U)$
if and only if the moment map component $\Phi^\xi$ is locally constant
on $G\cdot U$, which is the case if and only if the induced vector
field $\xi_X$ is tangent everywhere on $G\cdot U$ to the foliation
$\ca{F}$.  Define $N(U)$ to be the connected immersed (but not
necessarily closed) Lie subgroup of $G$ whose Lie algebra is $\n(U)$.
If the leaves of $\ca{F}|_{G\cdot U}$ are closed subsets of $G\cdot
U$, then the subgroup $N(U)$ is closed, but we do not assume this to
be the case.  For all $g\in G$ and $\xi\in\n(U)$ we have
$$
\iota((\Ad_g(\xi))_X)\omega=\iota(g_*(\xi_X))\omega=
(g^{-1})^*(\iota(\xi_X)\omega)=0
$$
on $G\cdot U$, and therefore the adjoint action of $G$ on $\g$
preserves $\n(U)$.  In particular the subalgebra $\n(U)$ of $\g$ is an
ideal and the subgroup $N(U)$ of $G$ is normal.

The assignment $\n\colon U\mapsto\n(U)$ is a presheaf on $X$.  Its
associated sheaf $\tilde{\n}$ is a subsheaf of ideals of the constant
sheaf $\g$.  We have $\tilde{\n}(U)=\prod_V\n(V)$, where the product
is over all connected components $V$ of $U$.  The restriction
morphisms of the presheaf $\n$ are injective, which implies that for a
decreasing sequence of open sets $U_1\supset U_2\supset\cdots\supset
U_n\supset\cdots$ the sequence of ideals
$\n(U_1)\subset\n(U_2)\subset\dots\subset\n(U_n) \subset\dots$ is
increasing and therefore eventually constant.  Hence the sheaf
$\tilde{\n}$ is constructible and its stalk $\n_x=\tilde{\n}_x$ at $x$
is equal to $\n(U)$ for all sufficiently small open neighbourhoods $U$
of $x$.  Similarly, the presheaf $N\colon U\mapsto N(U)$ sheafifies to
a constructible subsheaf $\tilde{N}$ of normal subgroups of the
constant sheaf $G$, whose stalk $N_x=\tilde{N}_x$ at $x$ is equal to
$N(U)$ for any suitably small open $U$ containing $x$.  We will call
$\tilde{\n}=\tilde{\n}_X$ the \emph{null ideal sheaf} and
$\tilde{N}=\tilde{N}_X$ the \emph{null subgroup sheaf} of the
presymplectic Hamiltonian action.
\end{alinea}

\begin{alinea}[Clean actions]\label{alinea;clean}
Let $x\in X$ and let $U$ be a $G$-invariant open neighbourhood of $x$.
The $N(U)$-action maps each leaf of the foliation $\ca{F}|_U$ to
itself.  Therefore the orbit $N(U)\cdot x$ is contained in $G\cdot
x\cap\ca{F}(x)$, where $\ca{F}(x)$ denotes the leaf of $x$.
Infinitesimally, the tangent space $T_x(N(U)\cdot x)$ is contained in
$T_x(G\cdot x)\cap T_x\ca{F}$.  Taking $U$ to be small enough we have
$N(U)=N_x$ and so $T_x(N_x\cdot x)\subset T_x(G\cdot x)\cap
T_x\ca{F}$.  We call the $G$-action on $X$ \emph{clean} at $x$ if this
inclusion is an equality, i.e.\
\begin{equation}\label{equation;clean}
T_x(N_x\cdot x)=T_x(G\cdot x)\cap T_x\ca{F}.
\end{equation}
Cleanness is a $G$-invariant condition: if the action is clean at $x$,
then it is clean at $gx$ for every $g\in G$.  Cleanness is a local
condition: the $G$-action on $X$ is clean at $x$ if and only if the
$G$-action on $U$ is clean at $x$ for some $G$-invariant open set $U$
containing $x$.  Cleanness is not necessarily an open condition.  (See
Example \ref{example;clean}.)

We will state some criteria for the action to be clean in terms of the
induced $G$-action on the leaf space.  Since the $G$-action on $X$
preserves the form $\omega$, it sends leaves to leaves, and therefore
descends to a continuous action on the leaf space $X/\ca{F}$ (equipped
with its quotient topology).  Let $G_x$ be the stabilizer of $x\in X$
and $\g_x$ its Lie algebra.  Let $\bar{x}=\ca{F}(x)$ denote the leaf
of $x$ considered as a point in the leaf space.  Let $G_{\bar{x}}$ be
the stabilizer of $\bar{x}$ with respect to the induced $G$-action on
$X/\ca{F}$.  We equip $G_{\bar{x}}$ with its induced Lie group
structure (see e.g.\ \cite[\S~III.4.5]{bourbaki;groupes-algebres}),
which makes it an immersed (but not necessarily embedded) subgroup of
$G$.  Its Lie algebra $\g_{\bar{x}}$ consists of all $\xi\in\g$
satisfying $\xi_X(x)\in T_x\ca{F}$.  By definition we have
\begin{equation}\label{equation;stabilizer}
G_{\bar{x}}\cdot x=G\cdot x\cap\ca{F}(x)
\end{equation}
and
\begin{equation}\label{equation;null}
\n(U)=\bigcap_{x\in G\cdot U}\g_{\bar{x}}
\end{equation}
for all open $U\subset X$.  (Thus $N(X)$ is the identity component of
the subgroup of $G$ that acts trivially on the leaf space $X/\ca{F}$.)
The foliation $\ca{F}$, being $G$-invariant, induces a foliation of
each orbit $G\cdot x$.  This induced foliation is equal to the null
foliation of the form $\omega$ restricted to $G\cdot x$.  The leaves
of the induced foliation are the connected components of intersections
of the form $G\cdot x\cap\ca{F}(y)$.  We see from
\eqref{equation;stabilizer} that the leaves of the induced foliation
can also be described as the left translates of the connected
components of $G_{\bar{x}}\cdot x$.

\begin{sublemma}\label{lemma;clean-x}
For every $x\in X$ the following conditions are equivalent.
\begin{enumerate}
\item\label{item;clean}
The $G$-action is clean at $x$;
\item\label{item;clean-vector}
there exist vectors $\xi_1$, $\xi_2,\dots$, $\xi_k\in\g$ and a
$G$-invariant open neighbourhood $U$ of $x$ with the property that
$\xi_{1,X}$, $\xi_{2,X},\dots$, $\xi_{k,X}$ are tangent to $\ca{F}$ on
$U$ and $\xi_{1,X}(x)$, $\xi_{2,X}(x),\dots$, $\xi_{k,X}(x)$ span
$T_x(G\cdot x)\cap T_x\ca{F}$;
\item\label{item;foliation}
the leaves of the foliation of the $G$-orbit $G\cdot x$ induced by
$\ca{F}$ are $N_x$-orbits;
\item\label{item;stabilizer-orbit}
$T_x(N_x\cdot x)=T_x(G_{\bar{x}}\cdot x)$;
\item\label{item;leaf}
the orbit $N_x\cdot x$ is an open subset of the orbit
$G_{\bar{x}}\cdot x$;
\item\label{item;stalk}
$\g_{\bar{x}}=\g_x+\n_x$;
\item\label{item;sub-clean}
the $G_{\bar{x}}$-action is clean at $x$.
\end{enumerate}
\end{sublemma}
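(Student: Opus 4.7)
The plan is to route everything through the algebraic reformulation (vi) using the infinitesimal action $\g\to T_xX$, $\xi\mapsto\xi_X(x)$, whose kernel is $\g_x$. Since $\xi_X(x)\in T_x\ca{F}$ precisely when $\xi\in\g_{\bar{x}}$, this map identifies $\g_{\bar{x}}/\g_x$ with $T_x(G\cdot x)\cap T_x\ca{F}=T_x(G_{\bar{x}}\cdot x)$, and sends $\n_x$ onto $T_x(N_x\cdot x)=\n_x\cdot x$. In this dictionary, condition (i) reads $\n_x\cdot x=\g_{\bar{x}}\cdot x$, which is precisely (iv); quotienting by $\g_x\subset\g_{\bar{x}}$ (the containment holds because $\xi\in\g_x$ forces $\xi_X(x)=0\in T_x\ca{F}$) turns this into $\g_{\bar{x}}=\g_x+\n_x$, which is (vi). Moreover, since $\n_x\subset\g_{\bar{x}}$ the immersed subgroup $N_x$ sits inside $G_{\bar{x}}$, so $N_x\cdot x$ is an immersed submanifold of $G_{\bar{x}}\cdot x$ whose tangent space at $x$ fills that of $G_{\bar{x}}\cdot x$ iff $N_x\cdot x$ is open in $G_{\bar{x}}\cdot x$, which is (v). Hence (i), (iv), (v), and (vi) are immediately equivalent.

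For (iii) $\Leftrightarrow$ (v), I invoke the description just above the lemma: the leaves of the induced foliation on $G\cdot x$ are the left translates of the connected components of $G_{\bar{x}}\cdot x$, and since $\n_x$ is an ideal in $\g$ the subgroup $N_x$ is normal in $G$, so its orbits on $G\cdot x$ are the left translates of $N_x\cdot x$. The two partitions agree iff the connected subset $N_x\cdot x\subset G_{\bar{x}}\cdot x$ is the component of $x$, iff $N_x\cdot x$ is open in $G_{\bar{x}}\cdot x$. For (i) $\Leftrightarrow$ (ii) I use the constructibility of the presheaf $\n$ from \ref{alinea;null}: choose a $G$-invariant open neighbourhood $U$ of $x$ with $\n(U)=\n_x$. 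Assuming (i), any basis of $\n_x$ supplies the vectors required in (ii); conversely, vectors $\xi_j$ as in (ii) belong to $\n(U)\subset\n_x$, so the $\xi_{j,X}(x)$ lie in $T_x(N_x\cdot x)$ and by hypothesis span $T_x(G\cdot x)\cap T_x\ca{F}$, which combined with the always-valid inclusion $T_x(N_x\cdot x)\subset T_x(G\cdot x)\cap T_x\ca{F}$ forces equality, giving (i).

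The remaining equivalence (i) $\Leftrightarrow$ (vii) I obtain by applying the already-established (i) $\Leftrightarrow$ (vi) to the $G_{\bar{x}}$-action on $X$. For that action at $x$, the leaf stabilizer is all of $G_{\bar{x}}$ (since $G_{\bar{x}}$ fixes $\ca{F}(x)$ by definition) and the point stabilizer is $G_{\bar{x}}\cap G_x=G_x$; writing $\mathring{\n}_x$ for its null-ideal stalk at $x$, (vi) becomes $\g_{\bar{x}}=\g_x+\mathring{\n}_x$. A $G$-invariant open set is automatically $G_{\bar{x}}$-invariant, so $\n_x\subset\mathring{\n}_x\subset\g_{\bar{x}}$, which yields (i) $\Rightarrow$ (vii) at once. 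The main obstacle is the converse, which reduces to verifying the orbit identity $\n_x\cdot x=\mathring{\n}_x\cdot x$ in $T_xX$: given $\xi\in\mathring{\n}_x$ whose induced vector field is tangent to $\ca{F}$ on a $G_{\bar{x}}$-invariant neighbourhood of $x$, one must produce $\eta\in\n_x$ with $\eta_X(x)=\xi_X(x)$, i.e.\ modify $\xi$ by an element of $\g_x$ so that tangency holds on some $G$-invariant neighbourhood of $G\cdot x$. This is where the compactness of $G$ and the $G$-equivariance of both $\Phi$ and $\ca{F}$ genuinely enter — likely via the slice theorem around $x$ so as to compare the two flavours of null ideal. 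Granting this identity, both versions of (vi) collapse to the single statement $\n_x\cdot x=\g_{\bar{x}}\cdot x$, closing the cycle.
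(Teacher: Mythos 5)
Your treatment of (i), (ii), (iv), (v) and (vi) is correct and essentially coincides with the paper's argument: everything is read off through the infinitesimal action $\g\to T_xX$, using $T_x(G_{\bar{x}}\cdot x)=T_x(G\cdot x)\cap T_x\ca{F}$ from \eqref{equation;stabilizer}, and (ii) is unwound via $\n(U)=\n(G\cdot U)\subset\n_x$ exactly as you do. Your route to (iii), via the description of the leaves of the induced foliation as translates of the components of $G_{\bar{x}}\cdot x$ together with the normality of $N_x$, is a mild repackaging of the paper's direct argument (which translates the leaf through $y=gx$, notes $\ca{F}^x(y)\supset N_x\cdot y$, and observes that equality holds iff $N_x\cdot x$ is open in $\ca{F}^x(x)$); both are fine.

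The one incomplete step is (vii)$\Rightarrow$(i), and you flag it yourself. You are right that the null ideal stalk $\n_x'$ of the $G_{\bar{x}}$-action a priori only satisfies $\n_x\subset\n_x'\subset\g_{\bar{x}}$, since membership in $\n_x'$ asks for tangency of $\xi_X$ to $\ca{F}$ only on the possibly much smaller saturation $G_{\bar{x}}\cdot U$ rather than on $G\cdot U$; so (vii), which in your dictionary reads $\g_{\bar{x}}=\g_x+\n_x'$, does not formally yield $\g_{\bar{x}}=\g_x+\n_x$. Writing ``granting this identity'' leaves that implication unproved. For comparison, the paper's proof simply declares (vii) to be a reformulation of (iv), i.e.\ it reads cleanness of the $G_{\bar{x}}$-action as the equality $T_x(N_x\cdot x)=T_x(G_{\bar{x}}\cdot x)\cap T_x\ca{F}=T_x(G_{\bar{x}}\cdot x)$ with the ambient null group $N_x$, under which reading there is nothing left to prove. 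If one instead insists, as you do, on recomputing the null ideal intrinsically for $G_{\bar{x}}$, a genuine argument is needed (for abelian $G$ it is easy: $\Phi^\xi$ is $G$-invariant, so $d\Phi^\xi$ vanishing on $G_{\bar{x}}\cdot U\supset U$ forces vanishing on $G\cdot U$, whence $\n_x'=\n_x$; the nonabelian case requires more care). Either adopt the paper's reading of (vii) and close the cycle immediately, or actually establish $\n_x'\cdot x=\n_x\cdot x$; as submitted, the proposal does neither, so this direction stands as a gap.
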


\begin{proof}
Condition \eqref{item;clean-vector} is a straightforward reformulation
of the definition of cleanness.  Next we show that
\eqref{item;clean}~$\iff$~\eqref{item;foliation}.  Let $\ca{F}^x$ be
the induced foliation of the orbit $G\cdot x$.  Let $g\in G$ and
$y=gx$.  The leaf $\ca{F}^x(y)=g\cdot\ca{F}^x(x)$ contains the orbit
$N_x\cdot y=N_x\cdot gx=g\cdot N_x\cdot x$.  Since $N_x$ is connected,
the reverse inclusion $\ca{F}^x(y)\subset N_x\cdot y$ holds if and
only if $N_x\cdot x$ is open in $\ca{F}^x(x)$, which is the case if
and only if $T_x(N_x\cdot x)=T_x(G\cdot x)\cap T_x\ca{F}$.  The
equivalence \eqref{item;clean}~$\iff$~\eqref{item;stabilizer-orbit} is
immediate from \eqref{equation;clean} and \eqref{equation;stabilizer}.
Since the orbit $N_x\cdot x$ is always a subset of $G_{\bar{x}}\cdot
x$, the equivalence
\eqref{item;stabilizer-orbit}~$\iff$~\eqref{item;leaf} is immediate.
Condition \eqref{item;stabilizer-orbit} is equivalent to
$\g_{\bar{x}}/\g_x=(\n_x+\g_x)/\g_x$, which is equivalent to
\eqref{item;stalk}.  Finally, \eqref{item;sub-clean} is a
reformulation of \eqref{item;stabilizer-orbit}.
\end{proof}

\begin{subexample}\label{example;clean}
Let $X=Y\times V$ and $\omega=\omega_Y\oplus0$, where $(Y,\omega_Y)$
is a symplectic Hamiltonian $G$-manifold and $V$ is a real $G$-module.
Then the leaves of the null foliation $\ca{F}$ are the fibres of the
projection $X\to Y$.  Let $x=(y,v)\in X$.  Then $\ca{F}(x)=\{y\}\times
V$ and therefore for $\xi\in\g$ the tangent vector to the orbit
$\xi_{X,x}\in T_x(G\cdot x)$ is tangent to the leaf if and only if
$\xi_{Y,y}=0$.  This shows that
\begin{equation}\label{equation;gf}
T_x(G\cdot x)\cap T_x\ca{F}=\{0\}\times T_v(G_y\cdot v)\subset
T_yY\times V.
\end{equation}
The stabilizer of the leaf $\ca{F}(x)$ is $G_{\bar{x}}=G_y$.  It
follows from \eqref{equation;null} that for all open $U\subset X$ we
have 
$$
\n(U)=\bigcap_{(y,v)\in G\cdot U}\g_{\bar{x}}=\bigcap_{(y,v)\in G\cdot
  U}\g_y=\lie{k},
$$
where $\lie{k}=\ker(\g\to\Gamma(TY))$ denotes the kernel of the
infinitesimal action of $\g$ on $Y$.  Hence $\tilde{\n}_X$ is the
constant sheaf with stalk $\lie{k}$ and $N_x=K$, where
$K=\exp(\lie{k})$.  Therefore
\begin{equation}\label{equation;n}
T_x(N_x\cdot x)=\{0\}\times T_v(K\cdot v)\subset T_yY\times V.
\end{equation}
Comparing \eqref{equation;gf} with \eqref{equation;n} we see that the
action is clean at $x=(y,v)$ if $v=0$, but usually not at other
points.  For instance, if the action on $Y$ is effective ($K=\{1\}$),
the action is not clean at $x$ soon as $\dim(G_y\cdot v)\ge1$.
\end{subexample}  

Two extreme cases of the cleanness condition merit attention.  We say
the $G$-action is \emph{leafwise transitive at $x$} if
$\ca{F}(x)=N_x\cdot x$.  Leafwise transitivity at a point is a
$G$-invariant, local and open condition.  We say that the action is
\emph{leafwise nontangent at $x$} if $T_x(G\cdot x)\cap T_x\ca{F}=0$.
Leafwise nontangency forces the stalk $N_x$ to be a subgroup of the
stabilizer $G_x$.  Leafwise nontangency at a point is a $G$-invariant
local condition, but not necessarily an open condition.  Either
condition implies cleanness.

We say that the $G$-action on $X$ is \emph{clean}
(resp.\ \emph{leafwise transitive}, resp.\ \emph{leafwise nontangent})
if it is clean (resp.\ leafwise transitive, resp.\ leafwise
nontangent) at all points of $X$.  Leafwise transitivity guarantees
that the null foliation is Riemannian, a property that is frequently
useful in applications.
\end{alinea}

\begin{alinea}\label{alinea;atiyah}
In the symplectic case ($\ker(\omega)=0$) the foliation $\ca{F}$ is
trivial, the sheaf $\tilde{\n}$ is constant with stalk equal to the
kernel of the infinitesimal action $\g\to\Gamma(TX)$, and the
cleanness condition is automatically fulfilled, so Theorem
\ref{theorem;convex} reduces to the convexity theorems of Atiyah
\cite{atiyah;convexity-commuting}, Guillemin and Sternberg
\cite{guillemin-sternberg;convexity;;1982}, and Kirwan
\cite{kirwan;convexity-III}.  A novel feature of our more general
theorem is that the polyhedral set $\Delta(X)$ may be irrational.  It
is however ``rational'' in the weak sense that its normal vectors are
contained in the quasi-lattice $\lie{X}_*(T)/(\lie{X}_*(T)\cap\n(X))$
in the quotient space $\t/\n(X)$, as we shall explain in
\ref{alinea;quasi-lattice}.
\end{alinea}

\begin{alinea}\label{alinea;prato-he}
Other antecedents of Theorem \ref{theorem;convex} can be found in the
papers of Prato \cite{prato;non-rational-symplectic} and Ishida
\cite{ishida;transverse-kaehler} and in He's thesis
\cite{he;odd-dimensional}.  These authors impose opposite versions of
our cleanness condition: Prato and Ishida deal with leafwise
transitive torus actions, while He studies certain leafwise nontangent
torus actions.  It was our attempt to unify their results that led to
this paper.  It was observed by He \cite[Ch.\ 4]{he;odd-dimensional}
that in the absence of any cleanness hypothesis the convexity of the
image may fail.  We give further counterexamples in
\ref{alinea;failure}.  However, cleanness is not necessary for
convexity to hold.  For instance, let $Z$ be a $G$-manifold and
suppose we have an equivariant surjective submersion $f\colon Z\to X$.
Then $f^*\omega$ is a presymplectic form on $Z$ and $f^*\Phi$ is a
moment map for the $G$-action on $Z$.  Clearly $Z$ has the same moment
map image as $X$.  But the action on $Z$ is rarely clean, even if the
action on $X$ is clean.
\end{alinea}

\begin{alinea}[The leaf space as a Hamiltonian space]
\label{alinea;leaf-space}
The null subgroup $N(X)$ acts trivially on the leaf space $X/\ca{F}$,
so the induced $G$-action descends to an action of the (in general
non-Hausdorff) quotient group $G/N(X)$ on the (in general
non-Hausdorff) space $X/\ca{F}$.  The moment map also descends because
of the following basic proposition.  Here we denote by
$C^{\smash\infty}_{\smash\bas}(X)$ the set of \emph{basic} smooth
functions on $X$, i.e.\ those that are constant along the leaves.  By
the \emph{affine span} of a subset $A$ of a vector space $E$ we mean
the smallest affine subspace of $E$ that contains $A$, i.e.\ the
intersection of all affine subspaces of $E$ that contain $A$.  We
denote by $F^\circ\subset E^*$ the annihilator of a linear subspace
$F$ of $E$.

\begin{subproposition}\label{proposition;affine}
\begin{enumerate}
\item\label{item;constant}
The moment map $\Phi$ is constant along the leaves of $\ca{F}$.
\item\label{item;poisson}
The moment map $\Phi$ induces a morphism of Poisson algebras
$\Phi^*\colon\g\to C^{\smash\infty}_{\smash\bas}(X)$.
\item\label{item;affine}
If $X$ is connected, the affine span of the image $\Phi(X)$ is of the
form $\lambda+\n(X)^\circ$ for some element $\lambda\in\g^*$ which is
fixed under the coadjoint $G$-action.
\end{enumerate}
\end{subproposition}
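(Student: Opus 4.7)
The plan is to handle the three parts essentially independently, using only the defining relation $d\Phi^\xi=\iota(\xi_X)\omega$, the constant-rank hypothesis $T\ca{F}=\ker(\omega)$, and the equivariance of $\Phi$.

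For part (i), the key observation is that for every $x\in X$, every $\xi\in\g$, and every $v\in T_x\ca{F}=\ker(\omega_x)$, one has
\[(d\Phi^\xi)_x(v)=\omega_x(\xi_X(x),v)=0.\]
Hence each component $\Phi^\xi$ has differential vanishing along $T\ca{F}$, so it is constant along each connected leaf. Letting $\xi$ run over a basis of $\g$ gives the leafwise constancy of $\Phi$.

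For part (ii), I would first recall the standard construction of a Poisson bracket on $C^\infty_{\bas}(X)$ from a constant-rank $2$-form: $\omega$ descends to a fibrewise-symplectic form $\bar\omega$ on the quotient bundle $TX/T\ca{F}$; a basic function $f$ has $df\in\Gamma(T\ca{F}^\circ)$, so $df$ defines a section of $(TX/T\ca{F})^*$, and $\bar\omega$ assigns to it a Hamiltonian vector field $X_f$ well-defined modulo $T\ca{F}$. The bracket $\{f,g\}:=X_f(g)$ is well-defined (since $dg$ annihilates $T\ca{F}$) and again basic. By part (i), $\Phi^\xi$ is basic, and the moment map equation identifies $\xi_X$ with $X_{\Phi^\xi}$ modulo $T\ca{F}$. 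The Poisson identity $\{\Phi^\xi,\Phi^\eta\}=\Phi^{[\xi,\eta]}$ then follows by differentiating the equivariance relation $\Phi(gx)=\Ad^*(g)\Phi(x)$ at $g=e$, exactly as in the symplectic case.

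For part (iii), the containment direction is immediate from the definition of the null ideal: for $\xi\in\n(X)$ we have $d\Phi^\xi=\iota(\xi_X)\omega=0$ identically on $X$, and connectedness of $X$ forces $\Phi^\xi$ to be constant. Fixing $x_0\in X$ and setting $\lambda_0=\Phi(x_0)$ therefore yields $\Phi(X)\subset\lambda_0+\n(X)^\circ$, so the affine span of $\Phi(X)$ lies in $\lambda_0+\n(X)^\circ$. Conversely, suppose $\xi\in\g$ annihilates $\Phi(X)-\lambda_0$; then $\Phi^\xi$ is constant on $X$, so $\iota(\xi_X)\omega=d\Phi^\xi=0$ everywhere, so $\xi\in\n(X)$. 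Hence the affine span is exactly $\lambda_0+\n(X)^\circ$. To arrange coadjoint invariance, I use that $\n(X)$ is an $\Ad$-invariant ideal (as observed in \ref{alinea;null}), so $\n(X)^\circ$ is $\Ad^*$-invariant; together with equivariance of $\Phi$ this makes $\lambda_0+\n(X)^\circ$ a $G$-invariant affine subspace. Compactness of $G$ then lets me average: $\lambda:=\int_G\Ad^*(g)(\lambda_0)\,dg$ is coadjoint-fixed by construction and lies in $\lambda_0+\n(X)^\circ$ since this affine subspace is $G$-stable. The main obstacle is the bookkeeping in part (ii) needed to transfer the classical Hamiltonian/Poisson formalism to the quotient bundle $TX/T\ca{F}$; parts (i) and (iii) are direct consequences of the moment map equation and the connectedness of $X$.
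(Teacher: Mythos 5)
Your proof is correct and follows essentially the same route as the paper: part (i) is the identical computation $d\Phi^\xi(v)=\omega_x(\xi_X(x),v)=0$ for $v\in\ker(\omega_x)$; part (ii) spells out the Poisson structure on basic functions that the paper delegates to a reference, with the bracket identity again coming from equivariance as in the nondegenerate case; and part (iii) matches the paper's argument, with your averaging of $\lambda_0$ over $G$ being exactly the proof of the paper's Lemma \ref{lemma;module-affine} inlined.
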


\begin{proof}
\eqref{item;constant}~Let $x\in X$, $v\in T_x\ca{F}=\ker(\omega_x)$,
and $\xi\in\g$.  Then $d\Phi^\xi(v)=\omega_x(\xi_X(x),v)=0$, so
$\Phi^\xi$ is constant on the leaf $\ca{F}(x)$ for all $\xi$.

\eqref{item;poisson}~It follows from \eqref{item;constant} that $\Phi$
pulls back smooth functions on $\g^*$ to basic functions on $X$.  That
$C_{\smash\bas}^{\smash\infty}(X)$ is a Poisson algebra is discussed
for example in \cite[\S\,2.2]{bursztyn;introduction-dirac}.  That
$\Phi^*$ preserves the Poisson bracket follows from the equivariance
of $\Phi$ as in the non-degenerate case.

\eqref{item;affine}~Let $\xi\in\g$.  Regarding $\xi$ as a linear
function on $\g^*$ we have: $\xi$ is constant on $\Phi(X)$ $\iff$
$\Phi^\xi$ is constant on $X$ $\iff$ $d\Phi^\xi=0$ $\iff$
$\xi\in\n(X)$.  Thus the affine span of $\Phi(X)$ is equal to
$\lambda+\n(X)^\circ$, where $\lambda=\Phi(x)$ for some $x\in X$.
Since the moment map is equivariant, this affine subspace contains the
coadjoint orbit of $\lambda$.  It follows from Lemma
\ref{lemma;module-affine}, applied to the $G$-module $\g^*$ and the
submodule $\n(X)^\circ$, that
$\lambda+\n(X)^\circ=\lambda_0+\n(X)^\circ$, where $\lambda_0$ is
$G$-fixed.
\end{proof}  

So if $X$ is connected we can replace $\Phi$ with $\Phi-\lambda$ to
obtain a new equivariant moment map which maps $X$ into
$\n(X)^\circ\cong(\g/\n(X))^*$ and which descends to a continuous map
$$\Phi_\ca{F}\colon X/\ca{F}\longto(\g/\n(X))^*.$$
This suggests the point of view that $\Phi_\ca{F}$ is the ``moment
map'' for a ``Hamiltonian action'' on the ``symplectic leaf space''
$X/\ca{F}$ of the ``Lie group'' $G/N(X)$, whose ``tangent Lie
algebra'' is $\g/\n(X)$, and that $\Delta(X)=\Phi_\ca{F}(X/\ca{F})\cap
C$ is the ``moment polytope'' for this action.  In the forthcoming
paper \cite{hoffman-lin-sjamaar;hamiltonian-stack} we will justify
this point of view in terms of the language of Lie groupoids or
differentiable stacks: we will ``integrate'' the foliated manifold
$(X,\ca{F})$ to a Lie groupoid $X_\bullet$ and the Lie algebra
homomorphism $\n(X)\to\g$ to a Lie $2$-group $G_\bullet$ which acts on
$X_\bullet$, and show that the moment polytope is a Morita invariant
of the $G_\bullet$-action on $X_\bullet$.

Here we point out just one manifestation of this Morita invariance.
Suppose that the null subgroup $N(X)$ admits a complement in the sense
that there exists an immersed Lie subgroup $K$ of $G$ with the
property that $KN(X)=G$ and $\lie{k}+\n(X)=\g$.  Let
$\m(X)=\lie{k}\cap\n(X)$ be the null ideal of the $K$-action on $X$.
Then the Lie $2$-group $N(X)\to G$ is Morita equivalent to the Lie
$2$-group $M(X)\to K$, where $M(X)$ is the immersed Lie subgroup of
$K$ generated by $\exp(\m(X))$.  The Lie algebras $\g/\n(X)$ and
$\lie{k}/\m(X)$ are isomorphic, so it follows immediately from
Proposition \ref{proposition;affine} that under the projection
$\g^*\to\lie{k}^*$ the $G$-moment map image of $X$ maps bijectively to
the $K$-moment map image.  We can play $G$ and $K$ off against each
other in two opposite ways.  (1)~If the $G$-action is clean, then the
$K$-action may not be clean, but even so the convexity theorem
guarantees the convexity of the $K$-moment map image.  (2)~If $G$ is
not compact, but $K$ is compact and acts cleanly, then the convexity
theorem guarantees the convexity of the $G$-moment map image.
\end{alinea}

\begin{alinea}[A foliated slice theorem]\label{alinea;transversal}
The first step towards the proof of Theorem \ref{theorem;convex} is to
construct equivariant foliation charts.  We would like to choose a
$G$-invariant transverse section to the foliation at a point $x$, but
a transverse section $Y$ has the property $T_xY\cap T_x\ca{F}=0$, and
therefore cannot be $G$-invariant unless the action is leafwise
nontangent at $x$.  Instead we do the next best thing, which is to
choose a $G$-invariant presymplectic submanifold $Y$ that is
transverse to $\ca{F}$ at $x$ and has the weaker property that
$T_xY\cap T_x\ca{F}\subset T_x(G\cdot x)$.  The slice theorem for
compact Lie group actions says that $x$ has an invariant open
neighbourhood which is equivariantly diffeomorphic to a homogeneous
vector bundle $E=G\times^{G_x}V$, where $V$ is the $G_x$-module
$T_xX/T_x(G\cdot x)$.  The following refinement of the slice theorem
states that we can single out a direct summand $V_1$ of $V$ such that
the corresponding subbundle $G\times^{G_x}V_1$ is a transversal $Y$ of
the desired type.  We write points in homogeneous bundles such as $E$
as equivalence classes $[g,v]$ of pairs $(g,v)\in G\times V$.

\begin{subtheorem}\label{theorem;transversal}
Let $x\in X$ and let $H=G_x$ be the stabilizer of $x$.  Define the
$H$-modules
$$
V=\frac{T_xX}{T_x(G\cdot x)},\quad V_0=\frac{T_x(G\cdot
  x)+T_x\ca{F}}{T_x(G\cdot x)},\quad V_1=V/V_0=\frac{T_xX}{T_x(G\cdot
  x)+T_x\ca{F}},
$$
and the corresponding $G$-homogeneous vector bundles
$$
E=G\times^HV,\qquad E_0=G\times^HV_0,\qquad E_1=E/E_0=G\times^HV_1.
$$
Choose an $H$-invariant inner product on $T_xX$ and identify $V\cong
V_0\oplus V_1$ as an orthogonal direct sum of $H$-modules and $E\cong
E_0\oplus E_1$ as an orthogonal direct sum of vector bundles.  There
exists a $G$-equivariant open embedding $\chi\colon E\to X$ which
sends $[1,0]$ to $x$ and has the following properties:
\begin{enumerate}
\item\label{item;sub}
$Y=\chi(E_1)$ is a presymplectic Hamiltonian $G$-manifold with
  presymplectic form $\omega_Y=\omega|_Y$ and moment map
  $\Phi_Y=\Phi|_Y$;
\item\label{item;tubular}
$U=\chi(E)$ is a $G$-equivariant tubular neighbourhood of $Y$ with
  tubular neighbourhood projection $p\colon U\to Y$ corresponding to
  the orthogonal projection $E\to E_1$; every fibre of $p$ is
  contained in a leaf of $\ca{F}$ and $p^{-1}(x)\cong V_0$;
\item\label{item;restrict}
$p^*\omega_Y=\omega_U$ and $p^*\Phi_Y=\Phi_U$, where
  $\omega_U=\omega|_U$ and $\Phi_U=\Phi|_U$;
\item\label{item;transverse}
$Y$ is transverse to the foliation $\ca{F}$;
\item\label{item;minimal}
$T_xY\cap T_x\ca{F}\subset T_x(G\cdot x)$.
\end{enumerate}
Now assume that the $G$-action is clean at $x$.  Then $\chi$ can be
chosen in such a way that in addition to
\eqref{item;sub}--\eqref{item;minimal} the following conditions are
satisfied:
\begin{enumerate}
\addtocounter{enumi}{5}
\item\label{item;leafwise}
the $G$-action on $Y$ is leafwise transitive;
\item\label{item;null}
the null ideal sheaves $\tilde{\n}_Y$ and $\tilde{\n}_U$ are constant
with stalk equal to $\n_{X,x}$.
\end{enumerate}
\end{subtheorem}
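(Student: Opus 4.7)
The plan is to begin with the classical equivariant slice theorem of Koszul and then deform the resulting tube by a $G$-equivariant Moser isotopy so that the orthogonal projection $p\colon E\to E_1$ becomes foliated, with $p^*\omega_Y=\omega_U$. Fix an $H$-invariant inner product on $T_xX$ (obtained by averaging); this induces orthogonal $H$-invariant splittings $V=V_0\oplus V_1$ and $E=E_0\oplus E_1$. Koszul's slice theorem furnishes a $G$-equivariant open embedding $\chi_0\colon E\to X$ sending $[1,0]$ to $x$ and with differential at $[1,0]$ restricting to the identity on $V$. Writing $Y_0=\chi_0(E_1)$, the identification $T_xY_0=T_x(G\cdot x)\oplus V_1$ inside $T_xX=T_x(G\cdot x)\oplus V_0\oplus V_1$ yields $T_xY_0\cap T_x\ca{F}=T_x(G\cdot x)\cap T_x\ca{F}$ and $T_xY_0+T_x\ca{F}=T_xX$, so \eqref{item;transverse} and \eqref{item;minimal} hold at $x$ and persist on the tube after shrinking the $H$-invariant neighborhood of $0$ in $V$.

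The central step is to adjust $\chi_0$ so that \eqref{item;restrict} holds. Let $p_0\colon E\to E_1$ be the orthogonal projection, and set $\omega_0=\chi_0^*\omega$ and $\omega_1=p_0^*(\omega_0|_{E_1})$. Both are closed $G$-invariant $2$-forms which agree on $E_1$, and since $\ker(\omega_0)_{[1,0]}=V_0=\ker(\omega_1)_{[1,0]}$ the linear homotopy $\omega_t=(1-t)\omega_0+t\omega_1$ has constant rank equal to $\rank\omega$ on a neighborhood of $E_1$. A $G$-equivariant relative Poincar\'e lemma for the deformation retract $E\to E_1$ produces a $G$-invariant $1$-form $\alpha$ with $d\alpha=\omega_1-\omega_0$ and $\alpha|_{E_1}=0$. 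This vanishing along $E_1$, combined with the coincidence of the null distributions of $\omega_t$ with the vertical distribution of $p_0$ along $E_1$, makes the presymplectic Moser equation $\iota(X_t)\omega_t=-\alpha$ solvable by a smooth $G$-invariant time-dependent vector field $X_t$ vanishing on $E_1$. Its flow is a $G$-equivariant isotopy fixing $E_1$ pointwise, and composing $\chi_0$ with the inverse flow produces a chart $\chi$ with $\chi^*\omega=p_0^*(\chi^*\omega|_{E_1})$. Setting $p=\chi\circ p_0\circ\chi^{-1}$, $Y=\chi(E_1)$, $U=\chi(E)$, the properties \eqref{item;sub}--\eqref{item;restrict} follow: the fibers of $p$ have dimension $\dim V_0=\corank\omega$ and lie in $\ker\omega_U$, so they are open in (hence fill out connected components of) leaves of $\ca{F}$; the identity $p^*\Phi_Y=\Phi_U$ comes from Proposition \ref{proposition;affine}\eqref{item;constant}, since $\Phi$ is constant along leaves and therefore factors through $p$.

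For the second half assume cleanness at $x$. For \eqref{item;null}, the monotonicity of the presheaf $\n$ noted in \ref{alinea;null} gives $\n_y\supset\n(W)=\n_x$ for every $y$ in a sufficiently small neighborhood $W$ of $x$; for the reverse inclusion, Lemma \ref{lemma;clean-x}\eqref{item;stalk} combined with upper semicontinuity of stabilizers along the slice (so $G_y\subset H$ for $y$ near $x$) and the identity $\g_{\bar{y}}=\g_y+\n_y$ forces $\n_y=\n_x$ after a final shrinking of $U$. This gives \eqref{item;null}. Condition \eqref{item;leafwise} then follows: since $N_y=N_x$ for all $y\in Y$, the orbit $N_x\cdot y$ is contained in the leaf $\ca{F}_Y(y)$; cleanness at $y$ (inherited from cleanness at $x$ together with $\n_y=\n_x$), lower semicontinuity of orbit dimension, and the constant rank of $\ca{F}_Y$ give $T_y(N_x\cdot y)=T_y\ca{F}_Y$, so $N_x\cdot y$ is open in $\ca{F}_Y(y)$; as distinct $N_x$-orbits are disjoint and a connected leaf cannot decompose into more than one nonempty open orbit, equality holds.

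The main technical obstacle is the presymplectic Moser argument in the second paragraph: unlike in the symplectic case, $\omega_t$ is degenerate, so solvability of $\iota(X_t)\omega_t=-\alpha$ requires $\alpha$ to vanish on $\ker\omega_t$, which is not automatic. The vanishing $\alpha|_{E_1}=0$ from the relative Poincar\'e lemma, together with the constancy of the null distribution of $\omega_t$ along $E_1$ (where $\omega_0$ and $\omega_1$ coincide), is what makes the Moser equation solvable after a final shrinking to a smaller tube.
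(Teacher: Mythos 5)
Your construction of the chart is genuinely different from the paper's, and the difference is exactly where the gap lies. The paper uses no Moser isotopy: it builds $\chi$ from a foliation chart $\breve\zeta$ that straightens $\ca{F}$, and observes that averaging over $H$ preserves the property of mapping leaf plaques onto affine subspaces parallel to $T_x\ca{F}$ (because $H$ preserves $T_x\ca{F}$). The fibres of $p$ then lie in leaves by construction and $p^*\omega_Y=\omega_U$ is immediate from $\omega$ being $\ca{F}$-basic. Your Moser route does not go through as written. First, $\omega_0$ and $\omega_1=p_0^*(\omega_0|_{E_1})$ agree as sections of $\Lambda^2T^*E$ only along the orbit $G\cdot[1,0]$, where $V_0\subset\ker(\omega_x)$ forces $\omega_x(u,v)=\omega_x(p_0u,p_0v)$; at a point $[g,v_1]\in E_1$ with $v_1\ne0$ the kernel of $\omega_0$ need not contain the $p_0$-vertical directions, since $\chi_0$ was not adapted to $\ca{F}$. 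Hence the relative Poincar\'e lemma along $E_1$ only yields $\alpha$ whose restriction to $TE_1$ vanishes, not $\alpha=0$ at points of $E_1$, and the constant-rank claim for $\omega_t$ near $E_1$ is unsupported. Second, and more fundamentally, solving $\iota(X_t)\omega_t=-\alpha$ requires $\alpha$ to annihilate $\ker(\omega_t)$ \emph{exactly} at every point of the tube; this pointwise algebraic condition is not implied by any vanishing along $E_1$ and cannot be produced by shrinking the tube, since smallness of $\alpha$ on the kernel is useless. You correctly flag this as the main obstacle, but the proposed fix does not remove it. (Two smaller slips: $\ker(\omega_0)_{[1,0]}=T_x\ca{F}=V_0\oplus\bigl(T_x(G\cdot x)\cap T_x\ca{F}\bigr)$, not $V_0$; and $\dim V_0=\corank\omega$ only in the leafwise nontangent case, so the fibres of $p$ are generally not open in leaves --- fortunately only containment is asserted in \eqref{item;tubular}.)

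The second half is circular. Cleanness is not an open condition (see \ref{alinea;clean} and Example \ref{example;clean}), so cleanness at nearby points $y$ --- equivalently the identity $\g_{\bar y}=\g_y+\n_y$ of Lemma \ref{lemma;clean-x}\eqref{item;stalk} --- cannot be ``inherited'' from cleanness at $x$; yet your proof of \eqref{item;null} invokes exactly that identity, while your justification of cleanness at $y$ in the \eqref{item;leafwise} paragraph cites $\n_y=\n_x$, i.e.\ \eqref{item;null} itself. The paper breaks the circle by establishing leafwise transitivity on $Y$ first: $\dim(N_{X,x}\cdot y)\ge\dim(N_{X,x}\cdot x)$ because $Y$ fibres equivariantly over $G\cdot x$, while $N_{X,x}\cdot y\subset\ca{F}_Y(y)$ and $\dim\ca{F}_Y(y)$ is constant; the resulting equality gives $\g_{\bar y}=\g_y+\n_{X,x}$ for all $y\in Y$ with the \emph{fixed} ideal $\n_{X,x}$, with no appeal to cleanness at $y$. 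Only then is constancy of the null ideal sheaf proved, and that step needs an argument you omit entirely: supposing $\xi\in\n_{Y,y}\setminus\n_{Y,x}$ yields $\xi\in\g_w+\n_{X,x}$ for all $w$ near $y$ but $\xi\notin\g_v+\n_{X,x}$ for some $v$ near $x$, which is contradicted by taking $w$ generic so that $\g_w$ is conjugate into $\g_v$. An appeal to upper semicontinuity of stabilizers does not substitute for this genericity argument. Your proof of \eqref{item;leafwise} from \eqref{item;null} is essentially sound on its own, but it is being fed an unproved input.
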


\begin{proof}
Choose a foliation chart $(O,\zeta)$ at $x$ in the following manner:
start with an $H$-invariant open neighbourhood $O$ of $x$ and a chart
$\breve{\zeta}\colon O\to T_xX$ centred at $x$ which satisfies
$T_x\breve{\zeta}=\id_{T_xX}$ and which maps the leaf of each $y\in O$
onto the affine subspace through $\breve{\zeta}(y)$ parallel to
$T_x\ca{F}$.  In other words,
$\breve{\zeta}(O\cap\ca{F}(y))=\breve{\zeta}(y)+T_x\ca{F}$ for all
$y$, and in particular $\breve{\zeta}(O\cap\ca{F}(x))=T_x\ca{F}$.  Let
$dh$ be the normalized Haar measure on $H$ and put
$\zeta(y)=\int_Hh\breve{\zeta}(h^{-1}y)\,dh$ for $y\in O$.  Then
$\zeta\colon O\to T_xX$ is $H$-equivariant and $T_x\zeta=\id_{T_xX}$.
For all $y\in O$ and $z\in O\cap\ca{F}(y)$ we have
$$
\zeta(z)-\zeta(y)=
\int_Hh\bigl(\breve{\zeta}(h^{-1}z)-\breve{\zeta}(h^{-1}y)\bigr)\,dh\in
T_x\ca{F},
$$
because $\breve{\zeta}(h^{-1}z)-\breve{\zeta}(h^{-1}y)\in T_x\ca{F}$
and $H$ preserves the subspace $T_x\ca{F}$ of $T_xX$.  This shows that
$\zeta(O\cap\ca{F}(y))$ is contained in the affine subspace
$\zeta(y)+T_x\ca{F}$.  After replacing $O$ with a smaller open set and
after rescaling $\zeta$ if necessary we obtain an $H$-equivariant
chart $\zeta\colon O\to T_xX$ centred at $x$ with the property that
$\zeta(O\cap\ca{F}(y))=\zeta(y)+T_x\ca{F}$ for all $y\in O$.  The
normal bundle of the orbit $G\cdot x\cong G/H$ is the homogeneous
vector bundle $E$, whose fibre $V$ we identify with the $H$-submodule
$T_x(G\cdot x)^\perp$ of $T_xX$ (the orthogonal complement of
$T_x(G\cdot x)$ with respect to the $H$-invariant inner product on
$T_xX$).  Likewise we identify
$$
V_0\cong T_x(G\cdot x)^\perp\cap T_x\ca{F}\qquad\text{and}\qquad
V_1\cong T_x(G\cdot x)^\perp\cap(T_x\ca{F})^\perp.
$$
Then $V=V_0\oplus V_1$ is an orthogonal direct sum and the map
$\chi_V=\zeta^{-1}|_V$ is an $H$-equivariant embedding of the fibre
$V$ into $O$ which maps every affine subspace of $V$ parallel to $V_0$
into a leaf of $\ca{F}$.  The map $\chi_V$ extends uniquely to a
$G$-equivariant open immersion $\chi\colon E\to X$, the image of which
is the open set $U=\chi(E)=G\cdot O$ and which maps each fibre of the
orthogonal projection $E\to E_1$ into a leaf of $\ca{F}$.  Let us
choose $O$ so small that $\chi$ is an embedding.  Then $U$ is a
tubular neighbourhood of the orbit $G\cdot x$.  We claim that
$\chi\colon E\to X$ satisfies requirements
\eqref{item;sub}--\eqref{item;minimal}.  The sum $E=E_1\oplus E_2$ is
a vector bundle over $E_1$ with fibre $V_0$, which proves
\eqref{item;tubular}.  The form $\omega$ restricts to forms $\omega_U$
on $U=\chi(E)$ and $\omega_Y$ on $Y=\chi(E_1)$.  The map $\Phi$
restricts to maps $\Phi_U\colon U\to\g^*$ and $\Phi_Y\colon Y\to\g^*$.
Since $\omega$ is $\ca{F}$-basic
(i.e.\ $\iota(v)\omega=\iota(v)d\omega=0$ for all vectors $v$ tangent
to $\ca{F}$) and each fibre of $p$ is contained in a leaf of $\ca{F}$,
we have $p^*\omega_Y=\omega_U$ and $p^*\Phi_Y=\Phi_U$, which proves
\eqref{item;restrict}.  This also implies that $\omega_Y$ is of the
same rank as $\omega$.  In particular $\omega_Y$ is of constant rank,
which proves \eqref{item;sub}, and $Y$ is transverse to $\ca{F}$,
which proves \eqref{item;transverse}.  The restricted foliation
$\ca{F}_Y=\ca{F}|_Y$ is the null foliation of $\omega_Y$.  The tangent
space to $Y$ at $x$ is $T_xY=T_x(G\cdot x)\oplus V_1$, so the tangent
space to $\ca{F}_Y$ at $x$ is the subspace of $T_xY$ given by
$$
T_x\ca{F}_Y=\ker(\omega_{Y,x})=T_xY\cap T_x\ca{F}=\bigl(T_x(G\cdot
x)\oplus V_1\bigr)\cap T_x\ca{F}.
$$
Since $V_1$ is orthogonal to both $T_x(G\cdot x)$ and $T_x\ca{F}$ this
gives
$$
T_x\ca{F}_Y=T_x(G\cdot x)\cap T_x\ca{F},
$$
which implies \eqref{item;minimal}.  Now assume the action is clean at
$x$.  Then we obtain
$$T_x\ca{F}_Y=T_x(N_{X,x}\cdot x).$$
Let us choose $O$ so small that $N_{X,x}=N_X(U)$; then $N_{X,x}\subset
N_{X,y}$ for all $y\in Y$.  The orbit $N_{X,x}\cdot y$ is an immersed
submanifold of $Y$ diffeomorphic to a homogeneous space of $N_{X,x}$.
Because $Y$ is a $G$-homogeneous vector bundle over $G\cdot x$, we
have a $G$-equivariant projection $Y\to G\cdot x$.  It follows that
$\dim(N_{X,x}\cdot y)\ge\dim(N_{X,x}\cdot x)$ for all $y\in Y$.  On
the other hand, since $N_{X,x}$ is contained in $N_{X,y}$,
$N_{X,x}\cdot y$ is contained in the leaf $\ca{F}_Y(y)$, whose
dimension is independent of $y$.  So we see that $\dim(N_{X,x}\cdot
y)=\dim(N_{X,x}\cdot x)$ and $T_y\ca{F}_Y=T_y(N_{X,x}\cdot y)$ for all
$y\in Y$.  This proves \eqref{item;leafwise}, but it actually proves
something a bit stronger, namely $T_y(G_{\bar{y}}\cdot
y)=T_y(N_{X,x}\cdot y)$ for all $y\in Y$.  This means
$\g_{\bar{y}}/\g_y=(\n_{X,x}+\g_y)/\g_y$, i.e.
\begin{equation}\label{equation;stalks}
\g_{\bar{y}}=\g_y+\n_{X,x}
\end{equation}
for all $y\in Y$.  Let $z\in U$ and put $y=p(z)\in Y$.  To prove
\eqref{item;null} it is enough to show that
$\n_{X,z}=\n_{Y,y}=\n_{X,x}$.  Let $\xi\in\g$.  By definition we have
$\xi\in\n_{X,z}$ if and only if $\iota(\xi_{U'})\omega_{U'}=0$ for
some open neighbourhood $U'\subset U$ of $z$.  Since the vector fields
$\xi_U$ and $\xi_Y$ are $p$-related and $p^*\omega_Y=\omega_U$, we
have $\iota(\xi_{U'})\omega_{U'}=0$ if and only if
$\iota(\xi_{p(U')})\omega_{p(U')}=0$, i.e.\ $\xi\in\n_{X,y}$.  This
proves $\n_{X,z}=\n_{Y,y}$.  Taking $z=x$ yields $\n_{X,x}=\n_{Y,x}$.
We finish by showing that $\n_{Y,y}=\n_{Y,x}$.  We replace $O$, if
necessary, by a smaller open neighbourhood of $x$ such that $Y$ has
the property that $\n_Y(Y)=\n_{Y,x}$.  Then we have
$\n_{Y,y}\supset\n_{Y,x}$.  Supposing our assertion
$\n_{Y,y}=\n_{Y,x}$ to be false, we can find $\xi\in\g$ such that
$\xi\in\n_y\setminus\n_x$.  This means that the vector field $\xi_Y$
is tangent to $\ca{F}_Y$ in an invariant neighbourhood $U_2$ of $y$ in
$Y$ but not tangent to $\ca{F}_Y$ in a invariant neighbourhood $U_1$
of $x$ in $Y$.  In other words,
$$
\xi_Y(w)\in T_w(G\cdot w)\cap T_w\ca{F}_Y=T_w(G_{\bar{w}}\cdot w)
$$
for all $w\in U_2$, but
$$
\xi_Y(v)\not\in T_v(G\cdot v)\cap T_v\ca{F}_Y=T_v(G_{\bar{v}}\cdot v)
$$
for some $v\in U_1$.  Because of \eqref{equation;stalks} this means
that $\xi\in\g_w+\n_{X,x}$ for all $w\in U_2$ but
$\xi\not\in\g_v+\n_{X,x}$ for some $v\in U_1$.  But, the group $G$
being compact, by choosing $w$ to be generic with respect to the
$G$-action we can arrange for $\g_w$ to be a subalgebra of $\g_v$,
which is a contradiction.  Therefore $\n_{Y,y}=\n_{Y,x}$.
\end{proof}

\begin{subcorollary}\label{corollary;clean}
\begin{enumerate}
\item\label{item;clean-x}
If the action is clean at $x\in X$, then the sheaves $\tilde{\n}$ and
$\tilde{N}$ are constant on a neighbourhood of $x$.
\item\label{item;clean-global}
Suppose that the $G$-action on $X$ is clean and that $X$ is connected.
Then the sheaves $\tilde{\n}$ and $\tilde{N}$ are constant.  It
follows that $T_x(N(X)\cdot x)=T_x(G\cdot x)\cap T_x\ca{F}$ for all
$x\in X$.
\item\label{item;leafwise-transitive-global}
If $X$ is connected and the $G$-action on $X$ is leafwise transitive,
then $\ca{F}(x)=N(X)\cdot x$ for all $x$.
\end{enumerate}
\end{subcorollary}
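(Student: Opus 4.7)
The plan is to reduce each of the three parts to the foliated slice theorem (Theorem \ref{theorem;transversal}), which does the real work.

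For part \eqref{item;clean-x}, cleanness at $x$ allows us to invoke conclusion \eqref{item;null} of Theorem \ref{theorem;transversal}: there is a $G$-equivariant tubular neighbourhood $U=\chi(E)$ of the orbit $G\cdot x$ on which the null ideal sheaf $\tilde{\n}|_U$ is the constant sheaf with stalk $\n_{X,x}$. Since the null subgroup sheaf $\tilde{N}$ is built pointwise from $\tilde{\n}$ by taking the connected immersed Lie subgroup of $G$ with prescribed Lie algebra, it is automatically constant on $U$ with stalk $N_{X,x}$.

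For part \eqref{item;clean-global}, part \eqref{item;clean-x} shows that the assignment $x\mapsto\n_x$ is locally constant on $X$. Viewing this as a map into the set of Lie subalgebras of the fixed Lie algebra $\g$, its level sets are open; consequently, each level set is both open and closed (its complement being a union of other open level sets). Since $X$ is connected, $x\mapsto\n_x$ is constant, so $\tilde{\n}$ is the constant sheaf with stalk $\n(X)$ and likewise $\tilde{N}$ is constant with stalk $N(X)$. Substituting $N(X)=N_x$ into the cleanness identity \eqref{equation;clean} yields the stated tangent-space equality.

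For part \eqref{item;leafwise-transitive-global}, as was observed in \ref{alinea;clean} leafwise transitivity implies cleanness, so part \eqref{item;clean-global} is applicable and gives $N_x=N(X)$ for every $x\in X$. Combining this with the definition $\ca{F}(x)=N_x\cdot x$ of leafwise transitivity at $x$ yields $\ca{F}(x)=N(X)\cdot x$ for all $x$. The only nontrivial step in the whole argument is the local-to-global passage in part \eqref{item;clean-global}, and even that is a routine application of the fact that a locally constant function on a connected space is constant; the deeper content has already been absorbed into Theorem \ref{theorem;transversal}.
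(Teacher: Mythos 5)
Your argument is correct and follows essentially the same route as the paper: part \eqref{item;clean-x} is a restatement of Theorem~\ref{theorem;transversal}\eqref{item;null}, part \eqref{item;clean-global} is the standard locally-constant-on-a-connected-space argument, and part \eqref{item;leafwise-transitive-global} is immediate from the definition of leafwise transitivity. The only point you leave implicit is why the common value of the stalks equals $\n(X)$ --- one has $\n(X)\subset\n_x$ by restriction, and conversely any $\xi$ lying in every stalk satisfies $d\Phi^\xi=0$ in a neighbourhood of each point, hence everywhere --- which is exactly what the paper's appeal to the monotonicity of the presheaf $\n$ takes care of.
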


\begin{proof}
\eqref{item;clean-x}~is a restatement of Theorem
\ref{theorem;transversal}\eqref{item;null}.
\eqref{item;clean-global}~follows from \eqref{item;clean-x} and the
monotonicity property $\n(U_1)\supset\n(U_2)$ for $U_1\subset U_2$ of
the presheaf $\n$.  \eqref{item;leafwise-transitive-global}~follows
immediately from \eqref{item;clean-global}.
\end{proof}

If the $G$-action is leafwise nontangent at $x$, the transversal $Y$
of Theorem \ref{theorem;transversal} is a section of the foliation and
therefore symplectic, which shows that $X$ is near $x$ a
$G$-equivariant bundle over a symplectic Hamiltonian $G$-manifold.  In
particular, if $x$ is a fixed point the action is leafwise nontangent
at $x$ and we obtain the following linearization or equivariant
Darboux theorem.  In the statement of this theorem we regard the
tangent space $T_xX$ as a presymplectic manifold with constant
presymplectic form $\omega_x$.  It follows from Corollary
\ref{corollary;hessian} that at a fixed point $x$ the moment map has a
well-defined Hessian $T^2_x\Phi\colon T_xX\to\g^*$.

\begin{subcorollary}[equivariant presymplectic Darboux theorem]
\label{corollary;linear}
Let $x\in X$ be a fixed point of $G$.  Then $x$ has a $G$-invariant
neighbourhood that is isomorphic as a presymplectic Hamiltonian
$G$-manifold to a $G$-invariant neighbourhood of the origin in the
tangent space $T_xX$, equipped with the presymplectic structure
$\omega_x$ and the moment map $\lambda+T_x^2\Phi$, where
$\lambda=\Phi(x)\in(\g^*)^G$ and the Hessian is given by
$T_x^2\Phi^\xi(v)=\frac12\omega_x(\xi(v),v)$.
\end{subcorollary}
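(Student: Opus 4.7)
The plan is to reduce to the standard equivariant Darboux theorem for symplectic Hamiltonian $G$-manifolds, with the null foliation contributing only a trivial product factor. Since $G\cdot x=\{x\}$, we have trivially $T_x(G\cdot x)\cap T_x\ca{F}=0$, so the action is leafwise nontangent, hence clean, at $x$. Applying Theorem \ref{theorem;transversal} with $H=G$, I obtain an orthogonal $G$-invariant decomposition $T_xX=V_0\oplus V_1$ with $V_0=T_x\ca{F}=\ker\omega_x$, together with a $G$-equivariant open embedding $\chi\colon V_0\oplus V_1\to X$ sending $0$ to $x$ whose image $U$ is tubular over $Y=\chi(V_1)$. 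By parts \eqref{item;sub} and \eqref{item;restrict} of that theorem, $(Y,\omega_Y,\Phi_Y)$ is a genuine symplectic Hamiltonian $G$-manifold with fixed point $x$, and $\chi^*\omega=\pi_1^*\omega_Y$, $\chi^*\Phi=\pi_1^*\Phi_Y$, where $\pi_1\colon V_0\oplus V_1\to V_1$ denotes the orthogonal projection.

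I would then apply the standard equivariant symplectic Darboux theorem at the fixed point $x$ of the symplectic Hamiltonian $G$-manifold $Y$. Transported to $V_1\cong T_xY$ via the diffeomorphism $\chi|_{V_1}$, it supplies a $G$-equivariant self-diffeomorphism $\psi_1$ of a neighborhood of $0$ in $V_1$ with $\psi_1(0)=0$, $\psi_1^*(\chi|_{V_1})^*\omega_Y=\omega_x|_{V_1}$, and $\psi_1^*(\chi|_{V_1})^*\Phi_Y=\lambda+\tfrac12\omega_x|_{V_1}(\xi(\cdot),\cdot)$. Extending trivially in the null directions by $\psi(v_0,v_1)=(v_0,\psi_1(v_1))$ and setting $\Psi=\chi\circ\psi$, I obtain a $G$-equivariant diffeomorphism from a neighborhood of $0$ in $T_xX$ onto a $G$-invariant neighborhood of $x$ in $X$.

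To verify $\Psi^*\omega=\omega_x$ and $\Psi^*\Phi=\lambda+T_x^2\Phi$, I would use two observations. First, since $V_0=\ker\omega_x$ and $V_0$ is $G$-stable (the null foliation being $G$-invariant), both the constant form $\omega_x$ and the quadratic map $v\mapsto\tfrac12\omega_x(\xi(v),v)$ factor as $\pi_1^*$ of their restrictions to $V_1$. Second, $\pi_1\circ\psi=\psi_1\circ\pi_1$ by construction. Combining these with the Darboux identities for $\psi_1$ and the pullback identities $\chi^*\omega=\pi_1^*\omega_Y$, $\chi^*\Phi=\pi_1^*\Phi_Y$ yields the desired equalities. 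One last point to check is the Hessian formula itself: that $\tfrac12\omega_x(\xi(v),v)$ equals the intrinsic Hessian $T_x^2\Phi^\xi(v)$ at the critical point $x$ (which is well defined by Corollary \ref{corollary;hessian}) follows by a short computation from $d\Phi^\xi=\iota(\xi_X)\omega$ using $\xi_X(x)=0$ and the $G$-invariance of $\omega$. I expect no real obstacle: all substance lies in the classical symplectic Darboux applied to $Y$, while the presymplectic structure of $X$ is absorbed into the trivial $V_0$-factor.
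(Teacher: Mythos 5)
Your argument is correct and follows the same route as the paper: invoke Theorem \ref{theorem;transversal} at the fixed point (where $V_0=T_x\ca{F}$ and $Y=\chi(V_1)$ is symplectic, so $U\cong V_0\times Y$), apply the symplectic equivariant Darboux theorem to $Y$ at $x$, and extend trivially in the null directions, with the quadratic moment map supplied by Lemma \ref{lemma;symplectic-module}. Your write-up merely makes explicit the pullback bookkeeping ($\pi_1\circ\psi=\psi_1\circ\pi_1$ and the factoring of $\omega_x$ through $\pi_1$) that the paper leaves implicit.
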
  

\begin{proof}
Since $G_x=G$, we have $E=V=T_xX$, $E_0=V_0=T_x\ca{F}$, and
$E_1=V_1=T_xX/T_x\ca{F}$.  Moreover, $T_xY\cap T_x\ca{F}=0$, so
$Y=\chi(V_1)$ is symplectic.  Therefore $U=\chi(V)=\chi(V_0\times
V_1)=V_0\times Y$.  Applying the symplectic equivariant Darboux
theorem (see
\cite[Theorem~22.2]{guillemin-sternberg;techniques;;1990}) to the
symplectic Hamiltonian $G$-manifold $Y$ and the fixed point $x\in Y$,
we find that a $G$-invariant neighbourhood of $x$ in $X$ is
presymplectically and $G$-equivariantly isomorphic to a neighbourhood
of the origin in $V\cong V_0\oplus V_1$.  The action on $V$ being
linear, the moment map on $V$ is quadratic with constant term
$\lambda$ and homogeneous quadratic part $T_x^2\Phi=\Phi_V$, where
$\Phi_V$ is as in Lemma \ref{lemma;symplectic-module}.
\end{proof}  
\end{alinea}

\begin{alinea}[Symplectization]\label{alinea;coisotropic}
The second step towards the proof of Theorem \ref{theorem;convex} is
symplectization.  Let $T^*\ca{F}$ be the vector bundle dual to
$T\ca{F}$ and let $\pr\colon T^*\ca{F}\to X$ be the bundle projection.
We choose a $G$-invariant Riemannian metric on $X$.  (Recall our
standing hypothesis that $G$ is compact.)  Let $TX\to T\ca{F}$ be the
orthogonal projection onto the subbundle $T\ca{F}$ of $TX$ (with
respect to the metric) and let $j\colon T^*\ca{F}\to T^*X$ be the dual
embedding.  Let $\omega_0$ be the standard symplectic form on the
cotangent bundle $T^*X$ and let $\Omega=\pr^*\omega+j^*\omega_0$.  The
$2$-form $\Omega$ on $T^*\ca{F}$ is symplectic in a neighbourhood of
the zero section $X$ and the embedding $X\to T^*\ca{F}$ is
coisotropic.  See \cite{gotay;coisotropic-presymplectic} for these
facts.  The $G$-action on $T^*\ca{F}$ is Hamiltonian with moment map
$\Psi\colon T^*\ca{F}\to\g^*$ given by
$$\Psi=\pr^*\Phi+j^*\Phi_0,$$
where $\Phi_0\colon T^*X\to\g^*$ is the moment map for the cotangent
action given by the dual pairing $\Phi_0^\xi(y)=\inner{y,\xi_X(x)}$
for $y\in T^*_xX$.  The germ at $X$ of the Hamiltonian $G$-manifold
$T^*\ca{F}$ is called the \emph{symplectization} of $X$.  The next
result says that in the leafwise transitive case every fibre of $\Phi$
is a fibre of $\Psi$ and that the image of $\Phi$ is the intersection
of the image of $\Psi$ with an affine subspace.

\begin{subproposition}\label{proposition;symplectization}
Assume that $X$ is connected and that the $G$-action on $X$ is
leafwise transitive.  Let $\lambda\in(\g^*)^G$ be as in Proposition
\ref{proposition;affine}\eqref{item;affine}.
\begin{enumerate}
\item\label{item;preimage-affine}
$X=\Psi^{-1}(\lambda+\n(X)^\circ)$.
\item\label{item;moment-clean}
$\Psi\colon T^*\ca{F}\to\g^*$ intersects the affine subspace
  $\lambda+\n(X)^\circ$ cleanly.
\item\label{item;image-affine}
$\Phi(X)=\Psi(T^*\ca{F})\cap(\lambda+\n(X)^\circ)$.
\end{enumerate}
\end{subproposition}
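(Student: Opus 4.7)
The plan is to compute $\Psi$ and $d\Psi$ along the zero section and to use leafwise transitivity to identify $T_x\ca{F}$ with the infinitesimal null-ideal orbit. Two preliminary observations underlie the whole argument. First, by Corollary~\ref{corollary;clean}\eqref{item;leafwise-transitive-global}, leafwise transitivity together with connectedness of $X$ gives $T_x\ca{F}=T_x(N(X)\cdot x)=\{\xi_X(x)\mid\xi\in\n(X)\}$ at every $x\in X$. Second, by Proposition~\ref{proposition;affine}\eqref{item;affine} one has $\Phi(X)\subset\lambda+\n(X)^\circ$, so $\langle\Phi(x)-\lambda,\xi\rangle=0$ for every $x\in X$ and $\xi\in\n(X)$, and in fact each such $\Phi^\xi$ is constant on $X$.

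For \eqref{item;preimage-affine}, using that $j$ is dual to the orthogonal projection $\pi\colon TX\to T\ca{F}$, I would expand, for $(x,y)\in T^*\ca{F}$ and $\xi\in\g$,
\[
\Psi^\xi(x,y)=\Phi^\xi(x)+\langle j(y),\xi_X(x)\rangle=\Phi^\xi(x)+\langle y,\pi(\xi_X(x))\rangle.
\]
When $\xi\in\n(X)$ the vector $\xi_X(x)$ lies in $T_x\ca{F}$, so $\pi(\xi_X(x))=\xi_X(x)$ and hence $\langle\Psi(x,y)-\lambda,\xi\rangle=\langle y,\xi_X(x)\rangle$. This vanishes for every $\xi\in\n(X)$ precisely when $y$ annihilates $\{\xi_X(x)\mid\xi\in\n(X)\}=T_x\ca{F}$, which, since $y\in T^*_x\ca{F}$, forces $y=0$. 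Therefore $\Psi^{-1}(\lambda+\n(X)^\circ)$ is exactly the zero section $X$.

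For \eqref{item;moment-clean}, at a point $x\in X$ on the zero section I split $T_x(T^*\ca{F})=T_xX\oplus T^*_x\ca{F}$ and differentiate the formula above to obtain
\[
d\Psi_x(v,w)^\xi=d\Phi^\xi(v)+\langle w,\pi(\xi_X(x))\rangle.
\]
For $\xi\in\n(X)$ the first term vanishes by constancy of $\Phi^\xi$, and the second reduces to $\langle w,\xi_X(x)\rangle$, so $d\Psi_x(v,w)\in\n(X)^\circ$ iff $w=0$, iff $(v,w)\in T_xX$. Combined with \eqref{item;preimage-affine}, this gives the clean-intersection equality $T_x(\Psi^{-1}(\lambda+\n(X)^\circ))=(d\Psi_x)^{-1}(\n(X)^\circ)$ at every point of the intersection. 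Part \eqref{item;image-affine} is then formal: $\Phi(X)=\Psi(X)=\Psi(\Psi^{-1}(\lambda+\n(X)^\circ))=\Psi(T^*\ca{F})\cap(\lambda+\n(X)^\circ)$. There is no real obstacle; the only point requiring care is the dual-projection identity $\langle j(y),v\rangle=\langle y,\pi(v)\rangle$, after which leafwise transitivity carries the argument through mechanically.
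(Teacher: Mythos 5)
Your proposal is correct and follows essentially the same route as the paper: compute $\Psi$ (and $T_x\Psi$) along the zero section, use Proposition \ref{proposition;affine}\eqref{item;affine} to kill the $\Phi$-terms for $\xi\in\n(X)$, and invoke Corollary \ref{corollary;clean}\eqref{item;leafwise-transitive-global} to identify $T_x\ca{F}$ with the span of the $\xi_X(x)$, $\xi\in\n(X)$, so that a covector in $T^*_x\ca{F}$ annihilating it must vanish. The only cosmetic difference is that you phrase the last step via the dual-projection identity $\inner{j(y),v}=\inner{y,\pi(v)}$ while the paper uses $\im(j)\cap(T_x\ca{F})^\circ=0$; these are the same fact.
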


\begin{proof}
\eqref{item;preimage-affine}~Let $x\in X$.  Then $\Phi_0(j(x))=0$, so
$$\Psi(x)=\Phi(x)+\Phi_0(j(x))=\Phi(x),$$
and so $\Psi(x)$ is in $\lambda+\n(X)^\circ$ by Proposition
\ref{proposition;affine}\eqref{item;affine}.  This shows that
$X\subset\Psi^{-1}(\lambda+\n(X)^\circ)$.  Conversely, let $z\in
T^*\ca{F}$ and suppose that $\Psi(z)\in \lambda+\n(X)^\circ$.  Put
$x=\pr(z)\in X$ and $y=j(z)\in T_x^*X$.  Then
$\Psi(z)=\Phi(x)+\Phi_0(y)$, so
$\Phi_0(y)=\Psi(z)-\Phi(x)\in\n(X)^\circ$.  It follows that
$\inner{y,\xi_X(x)}=0$ for every $\xi\in\n(X)$.  In other words, $y\in
T_x^*X$ annihilates the tangent space to the $N(X)$-orbit $N(X)\cdot
x$.  By Corollary \ref{corollary;clean}\eqref{item;clean-global} we
have $N(X)\cdot x=\ca{F}(x)$ because the action is leafwise
transitive.  Therefore $y$ annihilates all of $T_x\ca{F}$.  But $y$ is
in the image of $j$, which is a splitting of the natural surjection
$T^*X\to T^*\ca{F}$, and therefore
$y\in\im(j)\cap(T_x\ca{F})^\circ=0$.  We conclude that $z=x\in X$.

\eqref{item;moment-clean}~We have just shown that
$\Psi^{-1}(\lambda+\n(X)^\circ)$ is equal to $X$ and is therefore a
submanifold of $M=T^*\ca{F}$.  It remains only to show that $X$ has
the correct tangent bundle, namely $TX=(T\Psi)^{-1}(\n(X)^\circ)$.
Let $x\in X$.  Regarding $x$ as a point in the zero section of $T^*X$,
we have $T_x(T^*X)=T_xX\oplus T^*_xX$ and
$$
T_xM=T_xX\oplus T^*_x\ca{F},\qquad T_x\Psi=\pr^*T_x\Phi+j^*T_x\Phi_0,
$$
where now $\pr$ stands for the projection $T_xM\to T_xX$ and $j$ for
the inclusion $T_xM\to T_x(T^*X)$.  The derivative at $x$ of
$\Phi_0\colon T^*X\to\g^*$ is the linear map $T_x\Phi_0\colon
T_xX\oplus T^*_xX\to\g^*$ given by
$\inner{T_x\Phi_0(u,v),\xi}=\inner{v,\xi_X(x)}$ for $u\in T_xX$, $v\in
T^*_xX$ and $\xi\in\g$.  Now let $w\in T_xM$ and suppose
$T_x\Psi(w)\in\n(X)^\circ$.  Put $u=\pr(w)\in T_xX$ and $v=j(w)\in
T_x(T^*X)$.  Then $T_x\Phi_0(v)=T_x\Psi(w)-T_x\Phi(u)\in\n(X)^\circ$,
so $\inner{v,\xi_X(x)}=0$ for all $\xi\in\n(X)$.  As in the proof of
\eqref{item;preimage-affine} we deduce from this that $v=0$,
i.e.\ $w=u\in T_xX$.  This proves $(T_x\Psi)^{-1}(\n(X)^\circ)\subset
T_xX$.  The reverse inclusion follows from the fact that $X$ is
contained in $\Psi^{-1}(\lambda+\n(X)^\circ)$.

\eqref{item;image-affine}~follows immediately from
\eqref{item;preimage-affine}.
\end{proof}

The next result, which is essentially due to Guillemin and Sternberg,
is a partial converse to Proposition \ref{proposition;symplectization}
as well as a useful source of examples.

\begin{subproposition}\label{proposition;coisotropic}
Let $(M,\omega_M)$ be a symplectic Hamiltonian $G$-manifold with
moment map $\Phi_M\colon M\to\g^*$.  Let $\lambda\in\g^*$ and let $\a$
be an ideal of $\g$ satisfying $\lie{k}\subset\a\subset\g_\lambda$,
where $\lie{k}$ is the kernel of the infinitesimal action
$\g\to\Gamma(TM)$.  Assume that $\Phi_M$ intersects the affine
subspace $\lambda+\a^\circ$ cleanly.  Then
$X=\Phi_M^{-1}(\lambda+\a^\circ)$ is a coisotropic submanifold of $M$
preserved by the action of $G$.  Therefore $X$ is a presymplectic
Hamiltonian $G$-manifold with presymplectic form $\omega=\omega_M|_X$
and moment map $\Phi=\Phi_M|_X$.  The action on $X$ is leafwise
transitive and the null ideal $\n(X)$ of $X$ is equal to $\a$.
\end{subproposition}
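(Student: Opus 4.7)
The plan is to verify each assertion of the proposition in turn. The key computation, used throughout, translates clean intersection through the moment-map identity $\inner{T_x\Phi_M(v),\xi}=\omega_M(\xi_M(x),v)$: this shows $(T_x\Phi_M)^{-1}(\a^\circ)=(\a\cdot x)^{\omega_M}$, the $\omega_M$-orthogonal of the infinitesimal $\a$-orbit $\a\cdot x:=\{\xi_M(x):\xi\in\a\}$; clean intersection thus gives $T_xX=(\a\cdot x)^{\omega_M}$.

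For $G$-invariance, it suffices that $\lambda+\a^\circ$ be $G$-stable. For $\xi\in\g$ and $\eta\in\a$, $\inner{\lambda,[\xi,\eta]}=0$ since $\eta\in\a\subset\g_\lambda$; this says the tangent space to the coadjoint orbit $G\cdot\lambda$ at $\lambda$ lies in $\a^\circ$. Combined with the $G$-stability of $\a^\circ$ (a consequence of $\a$ being an ideal) and the connectedness of $G$, this yields $G\cdot\lambda\subset\lambda+\a^\circ$, so $\lambda+\a^\circ$ is $G$-stable.

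For coisotropy and constant rank, take $\omega_M$-orthogonals again: $(T_xX)^{\omega_M}=\a\cdot x$. The inclusion $\a\cdot x\subset T_xX$ follows from $\omega_M(\xi_M,\eta_M)=\Phi_M^{[\xi,\eta]}$ combined with $[\xi,\eta]\in\a\subset\g_\lambda$, which forces $\Phi_M^{[\xi,\eta]}(x)=\inner{\Phi_M(x),[\xi,\eta]}=\lambda([\xi,\eta])=0$ for every $x\in X\subset\Phi_M^{-1}(\lambda+\a^\circ)$. Thus $X$ is coisotropic with $\ker\omega_x=\a\cdot x$. Clean intersection forces $\dim T_xX$, and therefore $\dim(\a\cdot x)$, to be constant on $X$, so $\omega=\omega_M|_X$ has constant rank; that $\Phi=\Phi_M|_X$ is a moment map follows by pulling back the defining identity to $X$.

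It remains to identify the null ideal and to verify leafwise transitivity. For $\xi\in\a$, the linear functional $\xi$ is constant $=\lambda(\xi)$ on $\lambda+\a^\circ$, so $\Phi_M^\xi\equiv\lambda(\xi)$ on $X$, whence $d\Phi^\xi=0$ on $X$ and $\a\subset\n(X)$; consequently the connected immersed Lie subgroup $A\subset G$ with $\Lie A=\a$ is contained in the stalk $N_x$ at every $x\in X$. Combined with $T_x(A\cdot x)=\a\cdot x=\ker\omega_x=T_x\ca{F}$ and the general inclusion $T_x(N_x\cdot x)\subset T_x\ca{F}$, this forces $T_x(N_x\cdot x)=T_x\ca{F}$, i.e., leafwise transitivity. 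For the reverse inclusion $\n(X)\subset\a$, the condition $\xi\in\n(X)$ is equivalent to $\xi_X(x)\in\a\cdot x$ for all $x\in X$, i.e.\ $\xi\in\bigcap_{x\in X}(\a+\g_x)$. The main obstacle is to show this intersection equals $\a$; I would do so by choosing $x^*\in X$ in the principal orbit stratum of the $G$-action on $M$, where $\g_{x^*}=\lie{k}\subset\a$ by hypothesis, so that $\a+\g_{x^*}=\a$. That $X$ meets this open-dense stratum follows from a density argument using the clean intersection of $\Phi_M$ with $\lambda+\a^\circ$.
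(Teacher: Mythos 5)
Your argument up through the identification of the null foliation is correct and, unlike the paper, essentially self-contained: the paper quotes Guillemin--Sternberg (Theorem 26.4 of \emph{Symplectic techniques}) for the coisotropy of $X$ and for the fact that the leaves of $\ca{F}$ are the $A$-orbits, and Lemma \ref{lemma;coadjoint} for the invariance of $\lambda+\a^\circ$, whereas you verify these by direct computation with the identity $\inner{T_x\Phi_M(v),\xi}=\omega_M(\xi_M(x),v)$. The inclusion $\a\subset\n(X)$ and the deduction of leafwise transitivity are also fine (strictly, you should pass from the pointwise statement $T_y(A\cdot y)=T_y\ca{F}$ at every $y$ in a leaf to the global statement $\ca{F}(x)=A\cdot x$ by the usual open--closed argument on the connected leaf, but this is routine).

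The final step, $\n(X)\subset\a$, has a genuine gap. You correctly reduce it to showing $\bigcap_{x\in X}(\a+\g_x)=\a$, but the proposed mechanism --- find a single $x^*\in X$ with $\g_{x^*}=\lie{k}\subset\a$ by going to the principal orbit stratum --- fails on two counts. First, for nonabelian $G$ the principal isotropy algebra is in general strictly larger than $\lie{k}$: for $G=\SO(3)$ acting on the coadjoint orbit $M=S^2$ one has $\lie{k}=0$ while \emph{every} isotropy algebra is a copy of $\lie{so}(2)$. (Take $\a=0$ there, so $X=M$; the conclusion $\n(X)=\bigcap_x\g_x=\lie{k}=0=\a$ still holds, but only because the intersection runs over many points --- no single $x^*$ satisfies $\a+\g_{x^*}=\a$. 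So the single-point strategy cannot work in general, even where the proposition is true.) Second, the assertion that $X$ meets the principal stratum is not a formal consequence of density, since a submanifold can be disjoint from an open dense set; the ``density argument using the clean intersection'' would itself need a proof. The paper closes this step by an entirely different route: by Proposition \ref{proposition;affine}\eqref{item;affine} the affine span of $\Phi(X)$ equals $\lambda+\n(X)^\circ$, and comparing this with the affine spans of $\Phi(M)$ and of $\lambda+\a^\circ$ gives $\n(X)^\circ=\a^\circ$. You would need either to adopt that argument or to find some other way to exclude an ideal $\n\supsetneq\a$ with $\n\cdot x=\a\cdot x$ for all $x\in X$; as it stands, your proof does not do so.
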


\begin{proof}
It follows from Lemma \ref{lemma;coadjoint} that $\lambda+\a^\circ$ is
preserved by the coadjoint action.  Therefore $X$ is preserved by $G$
and, by \cite[Theorem 26.4]{guillemin-sternberg;techniques;;1990}, $X$
is coisotropic and the leaves of the null foliation of the
presymplectic form $\omega$ are the orbits of the $A$-action on $X$,
where $A$ is the connected immersed normal subgroup corresponding to
the ideal $\a$.  Hence the action is leafwise transitive.  By
Proposition \ref{proposition;affine}\eqref{item;affine} the affine
span of $\Phi(X)$ is the affine subspace $\lambda+\n(X)^\circ$ and the
affine span of $\Phi(M)$ is $\lambda+\lie{k}^\circ$.  Therefore
$$
\lambda+\n(X)^\circ=(\lambda+\a^\circ)\cap(\lambda+\lie{k}^\circ)=
\lambda+\a^\circ.
$$
We conclude that $\n(X)=\a$.
\end{proof}  

\end{alinea}

\begin{alinea}[Proof of the convexity theorem]\label{alinea;proof}
First we prove the following local version of the presymplectic
convexity theorem.  Recall that $T$ denotes a maximal torus of $G$,
$\t^*$ the dual of its Lie algebra $\t$, and $C$ a closed chamber in
$\t^*$.  Recall also that every coadjoint orbit intersects the chamber
$C$ in exactly one point and that the inclusion $C\to\g^*$ induces a
homeomorphism $C\to\g^*/\Ad^*(G)$, the quotient of $\g^*$ by the
coadjoint action.  (See
e.g.\ \cite[\S~IX.5.2]{bourbaki;groupes-algebres}.)  We identify
$\g^*/\Ad^*(G)$ with $C$ via this homeomorphism and denote by
$\phi\colon X\to C$ the composition of the moment map $\Phi\colon
X\to\g^*$ with the quotient map $\g^*\to C$.  Then the intersection
$\Delta(X)=\Phi(X)\cap C$ is nothing but the image $\phi(X)$.

\begin{subtheorem}[local presymplectic convexity theorem]
\label{theorem;local-convex}
Assume that the $G$-action is clean at $x\in X$.  Then there exist a
rational convex polyhedral cone $\Delta_x$ in $\t^*$ with apex
$\phi(x)$ and a basis of $G$-invariant open neighbourhoods $U$ of $x$
in $X$ with the following properties:
\begin{enumerate}
\item\label{item;connected}
the fibres of the map $\phi|_U$ are connected;
\item\label{item;cone}
$\phi\colon U\to\Delta_x\cap\bigl(\phi(x)+\n_x^\circ\bigr)$ is an open
  map.
\end{enumerate}  
\end{subtheorem}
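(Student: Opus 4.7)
The plan is a two-step reduction to the classical symplectic local convexity theorem: first, replace $X$ near $x$ by a leafwise transitive slice using the foliated slice theorem \ref{theorem;transversal}; second, symplectize that slice via Proposition \ref{proposition;symplectization} and intersect the resulting symplectic local-convexity data with an $\Ad^*(G)$-invariant affine subspace.

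\emph{Reduction to the leafwise transitive case.}\ By Theorem \ref{theorem;transversal}, $x$ has a basis of $G$-invariant tubular neighbourhoods $U=\chi(E|_B)$, indexed by concentric $H$-invariant balls $B\subset V$, together with a $G$-equivariant surjective submersion $p\colon U\to Y$ onto the transverse slice $Y$, whose fibres are the connected affine spaces modelled on $V_0$ and which satisfies $p^*\Phi_Y=\Phi|_U$. Hence $\phi|_U=\phi_Y\circ p$: the images of $\phi|_U$ and $\phi_Y$ coincide, the fibres of $\phi|_U$ are connected iff those of $\phi_Y$ are, and $\phi|_U$ is open iff $\phi_Y$ is (since $p$ is an open surjection). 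The cleanness hypothesis at $x$ combined with the second half of Theorem \ref{theorem;transversal} ensures moreover that the $G$-action on $Y$ is leafwise transitive and that $\tilde{\n}_Y$ is the constant sheaf with stalk $\n_x$. Hence it suffices to prove the theorem under the extra assumption that $X$ itself is leafwise transitive and $\tilde{\n}_X$ is constant with stalk $\n_x$.

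\emph{Symplectization and the symplectic local convexity theorem.}\ Shrink $X$ to a $G$-invariant open neighbourhood of $x$ on which the symplectization $M=T^*\ca{F}$, with form $\Omega=\pr^*\omega+j^*\omega_0$ and moment map $\Psi=\pr^*\Phi+j^*\Phi_0$, is genuinely symplectic. Proposition \ref{proposition;symplectization} asserts that $X=\Psi^{-1}(\lambda+\n_x^\circ)$ as a clean preimage, and that $\Psi|_X=\Phi$. Applying the symplectic local convexity theorem (Sjamaar; Lerman--Meinrenken--Tolman--Woodward) to $(M,\Psi)$ at $x$ yields a rational convex polyhedral cone $\Delta_x\subset\t^*$ with apex $\phi(x)$ and a basis of $G$-invariant open neighbourhoods $W$ of $x$ in $M$ such that $\psi|_W\colon W\to\Delta_x$ is an open map with connected fibres, where $\psi=\pi_C\circ\Psi$.

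\emph{Passage from $M$ to $X$, and the main obstacle.}\ Put $V=W\cap X$; as $W$ ranges over the basis above, these $V$ form a basis of $G$-invariant open neighbourhoods of $x$ in $X$, and $\phi|_V=\psi|_V$. Because $\lambda$ is $G$-fixed and $\n_x$ is an ideal, the affine subspace $\lambda+\n_x^\circ\subset\g^*$ is $\Ad^*(G)$-invariant, so its image under $\pi_C$ is the set denoted $\phi(x)+\n_x^\circ$ in the statement; for any $\alpha$ in this set the orbit $\Ad^*(G)\alpha$ lies in $\lambda+\n_x^\circ$. Consequently $\psi|_W^{-1}(\alpha)\subset\Psi^{-1}(\lambda+\n_x^\circ)=X$, so $\phi|_V^{-1}(\alpha)=\psi|_W^{-1}(\alpha)$ is connected. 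For openness, write any open $V'\subset V$ as $V'=W'\cap X$ for some open $W'\subset W$; the same $\Ad^*$-invariance gives $\phi(V')=\psi(W')\cap(\phi(x)+\n_x^\circ)$, which is open in $\Delta_x\cap(\phi(x)+\n_x^\circ)$ because $\psi(W')$ is open in $\Delta_x$. Taking $V'=V$ identifies the image. The principal difficulty is orchestrating the two reductions coherently: one must propagate cleanness at $x$ through the foliated slice so that $\tilde{\n}_Y$ becomes constant (hence Proposition \ref{proposition;symplectization} is available), and then verify that the clean intersection with the $\Ad^*(G)$-invariant affine subspace preserves both connectedness of fibres and openness of the chamber-valued moment map. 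Once these compatibilities are set up, the classical symplectic theorem does the rest.
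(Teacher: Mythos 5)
Your proposal is correct and follows essentially the same route as the paper: pass to the leafwise transitive transversal of Theorem \ref{theorem;transversal}, symplectize it, identify the slice as the clean preimage $\Psi^{-1}(\lambda+\n_x^\circ)$ of the $\Ad^*(G)$-invariant affine subspace via Proposition \ref{proposition;symplectization}, and transfer connectedness of fibres and openness from the symplectic local convexity theorem back through $p$ and the inclusion. The explicit verification that $\phi|_V^{-1}(\alpha)=\psi|_W^{-1}(\alpha)$ and $\phi(V')=\psi(W')\cap(\phi(x)+\n_x^\circ)$ is exactly the content of the paper's displayed identities \eqref{equation;fibre} and \eqref{equation;image}.
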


\begin{proof}
Let us replace $x$ by a suitable $G$-translate in order that
$\Phi(x)=\phi(x)$.  Choose a transversal $Y$ at $x$ and a
$G$-invariant tubular neighbourhood $U$ of $Y$ as in Theorem
\ref{theorem;transversal}.  Since $G$ is connected, we may assume $Y$
and $U$ to be connected.  Then $\Phi(U)\subset\phi(x)+\n_x^\circ$ by
Proposition \ref{proposition;affine}\eqref{item;affine}, and therefore
$\phi(U)\subset\phi(x)+\n_x^\circ$.  Let $\phi_U=\phi|_U$ and
$\phi_Y=\phi|_Y$.  Let $M$ be the symplectization of $Y$ as defined in
\ref{alinea;coisotropic}.  Let $\Psi\colon M\to\g^*$ the moment map
for the $G$-action on $M$ and $\psi\colon M\to C$ the composition of
$\Psi$ with the orbit map $\g^*\to C$.  It follows from Theorem
\ref{theorem;transversal} and Proposition
\ref{proposition;symplectization} that for all $\nu\in\phi(U)$
\begin{equation}\label{equation;fibre}
\phi_U^{-1}(\nu)=p^{-1}(\phi_Y^{-1}(\nu)),\qquad
\phi_Y^{-1}(\nu)=\psi^{-1}(\nu),
\end{equation}
and that
\begin{equation}\label{equation;image}
\phi_U=\phi_Y\circ p,\qquad\phi_Y=\psi|_Y,\qquad
Y=\psi^{-1}\bigl(\phi(x)+\n_x^\circ\bigr).
\end{equation}
The local convexity theorem in the symplectic case (see
e.g.\ \cite[Theorem~6.5]{sjamaar;convexity}) states that the fibres of
$\psi$ are connected and that $\psi\colon M\to\Delta_x$ is an open
mapping to a rational convex polyhedral cone $\Delta_x$ in $\t^*$ with
apex $\phi(x)=\psi(x)$.  Taking this into account, we see from
\eqref{equation;fibre} that the fibres of $\phi_U$ contract onto
fibres of $\psi$ and are therefore connected as well, and we see from
\eqref{equation;image} that $\phi_U$ is an open map to
$\Delta_x\cap\bigl(\phi(x)+\n_x^\circ\bigr)$.
\end{proof}

\begin{proof}[Proof of Theorem \ref{theorem;convex}]
Theorem \ref{theorem;local-convex} asserts that the family of convex
cones $\Delta_x\cap\bigl(\phi(x)+\n_x^\circ\bigr)$, where $x$ ranges
over $X$, is a system of \emph{local convexity data} for the quotient
map $\phi$ in the sense of Hilgert, Neeb and Planck
\cite[Definition~3.3]{hilgert-neeb-plank;symplectic-convexity;compositio}.
Parts \eqref{item;open} and \eqref{item;convex} of Theorem
\ref{theorem;convex} now follow from the local-to-global principle due
to these authors,
\cite[Theorem~3.10]{hilgert-neeb-plank;symplectic-convexity;compositio}.
It remains to prove part \eqref{item;rational}.  The local-to-global
principle also tells us that for each $x\in X$ the cone
$\Delta_x\cap\bigl(\phi(x)+\n_x^\circ\bigr)$ is equal to the
intersection of all supporting half-spaces of the convex set
$\Delta(X)$ at the point $\phi(x)$.  A closed convex set is equal to
the intersection of all its supporting half-spaces, and therefore
$$
\Delta(X)=\bigcap_{x\in X}\Delta_x\cap\bigl(\phi(x)+\n_x^\circ\bigr).
$$
Since the action is clean, by Corollary
\ref{corollary;clean}\eqref{item;clean-global} we have $\n_x=\n(X)$
for all $x$.  By Proposition
\ref{proposition;affine}\eqref{item;affine} there is an
$\Ad^*(G)$-fixed $\lambda\in\g^*$ such that
$\Phi(X)\subset\lambda+\n(X)^\circ=\phi(x)+\n(X)^\circ$ for all $x$.
After shifting the moment map we may assume $\lambda=0$.  Then
\begin{equation}\label{equation;cones}
\Delta(X)=\bigcap_{x\in X}\Delta_x\cap\n(X)^\circ,
\end{equation}
the intersection of the locally finite family of rational cones
$\Delta_x$ with the linear subspace $\n(X)^\circ$.  Now suppose that
the null subgroup $N(X)$ is closed.  Then by Corollary
\ref{corollary;closed-rational} the subspace $\n(X)\cap\t$ of $\t$ is
rational and therefore the subspace $\n(X)^\circ\cap\t^*$ of $\t^*$ is
rational.  Because of this and \eqref{equation;cones} the polyhedral
set $\Delta(X)$ is rational.  Conversely, suppose that $\Delta(X)$ is
rational.  Let $\lie{z}$ be the centre of $\g$ and
$\pr\colon\g^*\to\lie{z}^*$ the projection dual to the inclusion of
$\lie{z}$ into $\g\cong[\g,\g]\oplus\lie{z}$.  Then the image
$\pr(\Delta(X))$ is a rational polyhedral subset of $\lie{z}$.  The
chamber $C$ of $\g^*$ is of the form $C=C'\times\lie{z}^*$, where $C'$
is a chamber of $[\g,\g]^*$, so we see that $\pr(\Delta(X))$ is equal
to the image of $X$ under the moment map $\Phi_Z$ for the action of
the central subtorus $Z=\exp_G(\lie{z})$ of $G$.  By Proposition
\ref{proposition;affine}\eqref{item;affine} the affine span of
$\Phi_Z(X)=\pr(\Delta(X))$ is equal to $(\n(X)\cap\lie{z})^\circ$.  It
follows that $\n(X)\cap\lie{z}$ is a rational subspace of $\lie{z}$.
By Corollary \ref{corollary;closed-rational} we conclude that $N(X)$
is a closed subgroup of $G$.
\end{proof}

\end{alinea}  

\begin{alinea}[Irrationality]\label{alinea;quasi-lattice}
The  following result is a consequence of the proof of Theorem
\ref{theorem;convex}\eqref{item;rational}.

\begin{subcorollary}
Under the hypotheses of Theorem \ref{theorem;convex}, the polyhedral
set $\Delta(X)$ is rational if and only if the moment map image for
the action of the central subtorus $Z=\exp(\lie{z})$ is a rational
polyhedral subset of $\lie{z}^*$.  In particular $\Delta(X)$ is always
rational if $G$ is semisimple.
\end{subcorollary}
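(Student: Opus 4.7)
The plan is to reduce the corollary to Theorem~\ref{theorem;convex}\eqref{item;rational}, which characterizes rationality of $\Delta(X)$ as closedness of $N(X)$. Granting that theorem, it suffices to show that rationality of $\Phi_Z(X)$ in $\lie{z}^*$ is also equivalent to $N(X)$ being closed; both ingredients needed for this are essentially isolated already inside the proof of Theorem~\ref{theorem;convex}\eqref{item;rational}, namely the identity $\pr(\Delta(X)) = \Phi_Z(X)$ (where $\pr\colon\g^*\to\lie{z}^*$ is the projection dual to the inclusion $\lie{z}\inj\g$) and the fact that the affine span of $\Phi_Z(X)$ is a translate of $(\n(X)\cap\lie{z})^\circ$.

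For the direction ``$\Phi_Z(X)$ rational $\Rightarrow N(X)$ closed'', I would apply Proposition~\ref{proposition;affine}\eqref{item;affine} to the Hamiltonian action of the central subtorus $Z$ on $X$; since $Z$ is central, the null ideal of the restricted $Z$-action is exactly $\n(X)\cap\lie{z}$, so the affine span of $\Phi_Z(X)$ is a translate of $(\n(X)\cap\lie{z})^\circ \subset \lie{z}^*$. Rationality of $\Phi_Z(X)$ then forces rationality of this affine span, whence $\n(X)\cap\lie{z}$ is a rational subspace of $\lie{z}$. Corollary~\ref{corollary;closed-rational} now yields that $N(X)$ is closed in $G$.

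For the converse, suppose $N(X)$ is closed. Then Theorem~\ref{theorem;convex}\eqref{item;rational} gives that $\Delta(X)\subset\t^*$ is rational polyhedral. Using the product decomposition $C=C'\times\lie{z}^*$ of the chamber (with $C'\subset[\g,\g]^*$ a chamber of the semisimple part), the projection $\pr\colon\g^*\to\lie{z}^*$ is a rational linear map and therefore sends rational polyhedral sets to rational polyhedral sets; the image $\pr(\Delta(X))$ coincides with $\Phi_Z(X)$, finishing this direction.

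For the semisimple case, $\lie{z}=0$ makes $\Phi_Z(X)$ trivially a rational polyhedral subset of the zero vector space, and the implication just proved gives the rationality of $\Delta(X)$. I expect no genuine obstacle in executing this plan, since the two key identities are already present in the proof of Theorem~\ref{theorem;convex}\eqref{item;rational}; the only work is to isolate the central-torus component of that argument and read off the rephrased statement.
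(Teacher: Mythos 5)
Your proposal is correct and takes essentially the same route as the paper, which presents this corollary as a direct consequence of its proof of Theorem \ref{theorem;convex}\eqref{item;rational}: both directions are routed through the closedness of $N(X)$, using the identity $\pr(\Delta(X))=\Phi_Z(X)$, the fact that the affine span of $\Phi_Z(X)$ is a translate of $(\n(X)\cap\lie{z})^\circ$ (Proposition \ref{proposition;affine}\eqref{item;affine} applied to the central torus), and Corollary \ref{corollary;closed-rational}. Nothing further is needed.
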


So we see that the irrationality of presymplectic moment polytopes is
essentially an abelian phenomenon.

As explained in \ref{alinea;leaf-space}, $\Delta(X)$ is best regarded
intrinsically as a subset of $\t_0^*$, where $\t_0$ is the quotient
$\t/(\t\cap\n(X))$.  The vector spaces $\t_0$ and $\t_0^*$ have no
natural $\Q$-structure (except when the null subgroup $N(X)$ is
closed), but as a substitute we have the quasi-lattice
$\Lambda=\im\bigl(\lie{X}_*(T)\to\t_0\bigr)$.  Here
$\lie{X}_*(T)=\ker(\exp\colon\t\to T)$ is the exponential lattice of
the maximal torus, and by a \emph{quasi-lattice} in a vector space $V$
we mean a finitely generated additive subgroup that spans $V$ over
$\R$.  The rank of the quasi-lattice $\Lambda$ is $\ge\dim(\t_0)$,
where equality holds if and only if $\t\cap\n(X)$ is rational,
i.e.\ $N(X)$ is closed.  If we regard the polyhedral set $\Delta(X)$
as a subset of $\t_0^*$, the normal vectors to its facets are in
$\t_{\smash0}^{\smash**}=\t_0$.  It follows from
\eqref{equation;cones} that these normal vectors are contained in the
quasi-lattice $\Lambda$.

\begin{subcorollary}
Under the hypotheses of Theorem \ref{theorem;convex}, the polyhedral
subset $\Delta(X)\subset\t_0^*$ is the intersection of a locally
finite collection of halfspaces of the form $\inner{\eta,\cdot}\ge a$,
where $a\in\R$ and $\eta$ is in the quasi-lattice
$\Lambda=\im\bigl(\lie{X}_*(T)\to\t_0\bigr)$.
\end{subcorollary}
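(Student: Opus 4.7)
The plan is to read off the conclusion directly from the presentation \eqref{equation;cones} of $\Delta(X)$ established during the proof of Theorem \ref{theorem;convex}. After the shift of moment map used there (so that $\lambda=0$), we have
\[
\Delta(X)=\bigcap_{x\in X}\Delta_x\cap\n(X)^\circ,
\]
where each $\Delta_x$ is the rational convex polyhedral cone in $\t^*$ supplied by Theorem \ref{theorem;local-convex}. My first step is to unpack what ``rational'' means for $\Delta_x$: it is a finite intersection of closed half-spaces $\bigl\{\mu\in\t^*:\inner{\eta_i^{(x)},\mu}\ge a_i^{(x)}\bigr\}$ with normals $\eta_i^{(x)}\in\lie{X}_*(T)\subset\t$ and constants $a_i^{(x)}\in\R$.

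Next I would identify $\t_0^*$ with a linear subspace of $\t^*$ so that I can restrict these inequalities. Embedding $\t^*\hookrightarrow\g^*$ via a $G$-invariant inner product, one has $\n(X)^\circ\cap\t^*=(\t\cap\n(X))^\circ$, which is by definition of $\t_0=\t/(\t\cap\n(X))$ nothing but $\t_0^*$. By Proposition \ref{proposition;affine}\eqref{item;affine} (combined with the shift $\lambda=0$), $\Delta(X)$ already lies in this subspace.

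Restricting the inequality $\inner{\eta_i^{(x)},\mu}\ge a_i^{(x)}$ to $\mu\in\t_0^*$, the pairing depends only on the image $\bar\eta_i^{(x)}$ of $\eta_i^{(x)}$ under $\t\twoheadrightarrow\t_0$, because such a $\mu$ annihilates $\t\cap\n(X)$. By construction $\bar\eta_i^{(x)}\in\Lambda$. This yields
\[
\Delta(X)=\bigcap_{x\in X}\bigcap_i\bigl\{\mu\in\t_0^*:\inner{\bar\eta_i^{(x)},\mu}\ge a_i^{(x)}\bigr\},
\]
exhibiting $\Delta(X)\subset\t_0^*$ as an intersection of half-spaces of the required form.

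All that remains is local finiteness of the family of half-spaces. This is not a new fact: Theorem \ref{theorem;convex}\eqref{item;convex} already guarantees that $\Delta(X)$ is a convex polyhedral set, hence the intersection of a locally finite family of half-spaces in $\t_0^*$; discarding redundant constraints from the presentation above (keeping only the facet-supporting half-spaces) leaves a locally finite subfamily whose normals still belong to $\Lambda$. I do not anticipate any essential obstacle here---the corollary is essentially a bookkeeping observation about where the normals of the local cones $\Delta_x$ land after passage to the quotient $\t_0$.
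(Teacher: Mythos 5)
Your argument is correct and is essentially the paper's own: the corollary is deduced directly from the presentation \eqref{equation;cones} of $\Delta(X)$ as $\bigcap_{x}\Delta_x\cap\n(X)^\circ$, by noting that the normals of the rational cones $\Delta_x$ may be taken in $\lie{X}_*(T)$ and that their pairings with points of $\n(X)^\circ\cap\t^*\cong\t_0^*$ factor through the quotient $\t\to\t_0$, hence land in $\Lambda$. Your extra step of pruning to the facet-supporting half-spaces to secure local finiteness matches the paper's implicit use of the fact (from the local-to-global principle) that $\Delta(X)$ is already known to be a convex polyhedral set whose facet normals arise from the local cones.
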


\end{alinea}

%%%%%%%%%%%%%%%%%%%%%%%%%%%%%%%%%%%%%%%%%%%%%%%%%%%%%%%%%%%%%%%%%%%%%%%%
\section{Morse functions and the abelian case}\label{section;morse}
%%%%%%%%%%%%%%%%%%%%%%%%%%%%%%%%%%%%%%%%%%%%%%%%%%%%%%%%%%%%%%%%%%%%%%%%

\begin{alinea}
This section is a discussion of Morse-theoretic properties of
presymplectic moment maps and of the presymplectic convexity theorem
in the abelian case.  The main result is Theorem
\ref{theorem;morse-bott}, which asserts that the components of a
presymplectic moment map are Morse-Bott functions under the assumption
that the action is clean.

We keep the notational conventions of Section \ref{section;convex}:
$X$ is a manifold with presymplectic form $\omega$ and $G$ is a
compact connected Lie group acting on $M$ in a Hamiltonian fashion
with moment map $\Phi$.  We denote the null foliation of $\omega$ by
$\ca{F}$, the null ideal of an open subset $U$ by $\n(U)$ and the null
subgroup by $N(U)$.  We denote the leaf of $x\in X$ by $\ca{F}(x)$.
When we think of the leaf as a point in the leaf space $X/\ca{F}$ we
denote it by $\bar{x}$.
\end{alinea}

\begin{alinea}\label{alinea;compatible}
Let $E$ be a finite-dimensional real vector space and $\sigma$ a
presymplectic form on $E$.  We call an inner product on $E$
\emph{compatible} with $\sigma$ if on the linear subspace
$F=\ker(\sigma)^\perp$ orthogonal to $\ker(\sigma)$ the inner product
is compatible with the symplectic form $\sigma|_F$ in the usual sense,
namely that the endomorphism $J$ of $F$ determined by
$\inner{u,v}=\sigma(Ju,v)$ for all $u$, $v\in F$ defines an orthogonal
complex structure.  On the subspace $F$ the symplectic form and the
compatible inner product combine to give a Hermitian inner product.
Compatible inner products always exist and, if $E$ is a presymplectic
$H$-module for some compact Lie group $H$, can be chosen to be
$H$-invariant.  A choice of such an inner product makes the subspace
$F$ a unitary $H$-module.
\end{alinea}

\begin{alinea}\label{alinea;critical}
We say that a subset $Z$ of $X$ is a \emph{submanifold at $x\in X$} if
$x$ has an open neighbourhood $U$ with the property that $Z\cap U$ is
a submanifold of $U$.  Let $f\colon X\to\R$ be smooth and let
$\Gamma_f=\{\,x\in X\mid T_xf=0\,\}$ be its critical set.  We say a
critical point $x\in\Gamma_f$ is \emph{nondegenerate in the sense of
  Bott} if $\Gamma_f$ is a submanifold at $x$ and the kernel of the
Hessian $T_x^2f\colon T_xX\to\R$ is equal to the tangent space
$T_x\Gamma_f$.  The \emph{index} of $f$ at $x$ is the dimension of a
maximal negative definite subspace for the quadratic form $T_x^2f$.
We say $f$ is a \emph{Morse-Bott function} if all its critical points
are nondegenerate in the sense of Bott.
\end{alinea}

\begin{alinea}\label{alinea;bott}
For $\xi\in\g$ we denote the critical set $\Gamma_{\Phi^\xi}$ of the
function $\Phi^\xi$ by $X^{[\xi]}$.  For a subalgebra $\h$ of $\g$ we
define the \emph{critical set of} $\h$ to be
$X^{[\h]}=\bigcap_{\xi\in\h}X^{[\xi]}$, the common critical set of the
functions $\Phi^\xi$ with $\xi\in\h$.  In the symplectic case the
critical set of $\h$ is the fixed point manifold of the subgroup
generated by $\h$.  The next lemma says that for this to remain true
in the presymplectic case we have to replace fixed points by fixed
\emph{leaves}.

\begin{sublemma}\label{lemma;fixed}
\begin{enumerate}
\item\label{item;fixed-leaf}
Let $\h$ be a Lie subalgebra of $\g$ and $H$ the connected immersed
subgroup of $G$ with Lie algebra $\h$.  Then
$$
X^{[\h]}=\{\,x\in X\mid\h\subset\g_{\bar{x}}\,\}=\{\,x\in X\mid
H\cdot\bar{x}=\bar{x}\,\}.
$$
\item\label{item;critical}
$X^{[\xi]}=\{\,x\in X\mid\xi\in\g_{\bar{x}}\,\}=\{\,x\in
  X\mid\xi_X(x)\in T_x\ca{F}\,\}$ for all $\xi\in\g$.
\end{enumerate}
\end{sublemma}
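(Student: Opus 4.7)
The plan is to prove part \eqref{item;critical} first, straight from the moment map identity, and then deduce part \eqref{item;fixed-leaf} by intersecting over $\h$ and invoking the connected-subgroup/Lie-algebra correspondence.

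For \eqref{item;critical}, I would simply unpack definitions. By the moment map relation $d\Phi^\xi = \iota(\xi_X)\omega$, a point $x$ lies in $X^{[\xi]} = \Gamma_{\Phi^\xi}$ if and only if $\iota(\xi_X(x))\omega_x = 0$, that is, $\xi_X(x) \in \ker(\omega_x) = T_x\ca{F}$. But the description of $\g_{\bar x}$ recorded in \ref{alinea;clean} says exactly that $\g_{\bar x} = \{\xi \in \g : \xi_X(x) \in T_x\ca{F}\}$, so this last condition is precisely $\xi \in \g_{\bar x}$. Both equalities in \eqref{item;critical} follow at once.

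For \eqref{item;fixed-leaf}, the first equality is formal: by definition $X^{[\h]} = \bigcap_{\xi \in \h} X^{[\xi]}$, and applying \eqref{item;critical} to each $\xi$ shows that $x \in X^{[\h]}$ iff every $\xi \in \h$ lies in $\g_{\bar x}$, iff $\h \subset \g_{\bar x}$. For the second equality, the leaf $\bar x$ is fixed by $H$ iff $H \subset G_{\bar x}$. The stabilizer $G_{\bar x}$ is equipped with its immersed Lie-group structure (see \ref{alinea;clean}), whose Lie algebra is $\g_{\bar x}$; since $H$ is the connected immersed subgroup of $G$ with Lie algebra $\h$, the standard correspondence between connected Lie subgroups and Lie subalgebras gives $H \subset G_{\bar x}$ iff $\h \subset \g_{\bar x}$.

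There is no real obstacle: the lemma is essentially a dictionary entry translating the defining identity $d\Phi^\xi = \iota(\xi_X)\omega$ via the conventions of \ref{alinea;clean}. The one point requiring care is to work with the immersed rather than the subspace topology on $G_{\bar x}$, since otherwise the connected-subgroup/subalgebra correspondence would not apply to (the possibly non-closed) $H$ sitting inside (the possibly non-closed) $G_{\bar x}$.
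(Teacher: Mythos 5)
Your proof is correct and rests on the same two ingredients as the paper's: the moment map identity $d\Phi^\xi=\iota(\xi_X)\omega$ (which the paper packages as Lemma \ref{lemma;image-kernel}\eqref{item;kernel} applied to $H$, and you compute directly pointwise) and the connected-immersed-subgroup/subalgebra correspondence for $H\subset G_{\bar x}$. The only difference is that you derive \eqref{item;critical} first and intersect to get \eqref{item;fixed-leaf}, whereas the paper goes the other way; this is immaterial.
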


\begin{proof}
\eqref{item;fixed-leaf}~Let $x\in X$.  Since $H$ is connected, we have
$H\cdot\bar{x}=\bar{x}$ $\iff$ $H\subset G_{\bar{x}}$ $\iff$
$\h\subset\g_{\bar{x}}$, which proves the second equality.  Moreover,
$H\cdot\bar{x}=\bar{x}$ $\iff$ $T_x(H\cdot x)\subset T_x\ca{F}$.
Applying Lemma \ref{lemma;image-kernel}\eqref{item;kernel} to the
subgroup $H$ we see that $T_x(H\cdot x)\subset T_x\ca{F}$ is
equivalent to $T_x\Phi^\xi=0$ for all $\xi\in\h$, which proves the
first equality.

\eqref{item;critical}~follows from \eqref{item;fixed-leaf} applied to
the Lie subalgebra spanned by $\xi$.
\end{proof}  

The critical set $X^{[\xi]}$ is unaffected if we perturb $\xi$ in the
direction of the null ideal.  Specifically, if $U$ is an open subset
and $\zeta$ is in the null ideal $\n(U)$, then $d\Phi^\zeta=0$ on $U$.
Therefore
\begin{equation}\label{equation;critical}
X^{[\xi]}\cap U=X^{[\eta]}\cap U\quad\text{and}\quad
T_x^2\Phi^\xi=T_x^2\Phi^\eta
\end{equation}
for all $x\in U$ and for all $\xi$, $\eta\in\g$ satisfying
$\xi-\eta\in\n(U)$.  Similarly,
\begin{equation}\label{equation;critical-algebra}
X^{[\h]}\cap U=X^{[\h+\n(U)]}\cap U
\end{equation}
for all subalgebras $\h$.  The critical set of $\h$ is preserved by
the subgroup $HN(X)$ generated by the Lie subalgebra $\h+\n(X)$.

The next assertion implies that if the $G$-action is clean the
critical set is a submanifold and its normal bundle is symplectic.
This can easily be false without the cleanness assumption.  (See
\ref{alinea;failure} for counterexamples.)

\begin{subproposition}\label{proposition;critical}
Let $\h$ be a Lie subalgebra of $\g$ and let $x\in X^{[\h]}$.  Assume
that the $G$-action on $X$ is clean at $x$.  Then $X^{[\h]}$ is a
submanifold at $x$.  The subspace of $T_xX$ orthogonal to $T_xX^{[\h]}$
with respect to a $G_x$-invariant compatible inner product on $T_xX$
is symplectic.
\end{subproposition}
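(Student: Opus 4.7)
The plan is to reduce the claim to a linear model at a fixed point of a compact subgroup by invoking the equivariant presymplectic Darboux theorem (Corollary~\ref{corollary;linear}). The argument splits naturally into three stages.

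First, I would replace $\h$ by a Lie subalgebra of $\g_x$ without changing $X^{[\h]}$ locally. Since $x\in X^{[\h]}$ we have $\h\subset\g_{\bar x}$, and cleanness gives $\g_{\bar x}=\g_x+\n_x$, so $\h+\n_x\subset\g_x+\n_x$. Setting $\h'=(\h+\n_x)\cap\g_x$, a short computation shows that $\h'$ is a Lie subalgebra of $\g_x$ with $\h'+\n_x=\h+\n_x$. For a neighbourhood $U$ of $x$ small enough that $\n(U)=\n_x$, two applications of \eqref{equation;critical-algebra} give $X^{[\h]}\cap U=X^{[\h']}\cap U$. Passing to the Lie algebra of the closure in $G_x$ of the connected subgroup generated by $\h'$ does not enlarge the critical set either, since $T_y\ca{F}$ is closed in $T_yX$. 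Hence I may assume $\h\subset\g_x$ and that the corresponding group $H$ is a compact connected subgroup of $G_x$ fixing $x$.

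Second, I would linearise. Since $H$ fixes $x$, Corollary~\ref{corollary;linear} applied to the $H$-action identifies an $H$-invariant neighbourhood of $x$ with a neighbourhood of $0$ in $T_xX$, carrying the constant form $\omega_x$ and the quadratic moment map $\xi\mapsto\lambda^\xi+\tfrac12\omega_x(\xi v,v)$. By $H$-invariance of $\omega$ the differential is $d\Phi^\xi_v(u)=\omega_x(\xi v,u)$, so the critical set of $\Phi^\xi$ in the linear model is the linear subspace $\{v\in T_xX:\xi v\in\ker(\omega_x)\}$. Intersecting over $\xi\in\h$,
\[
X^{[\h]}\cap U\;\cong\;W:=\{v\in T_xX:\xi v\in\ker(\omega_x)\text{ for every }\xi\in\h\},
\]
which is a linear subspace, and therefore a submanifold at $x$.

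Finally, I would verify that the orthogonal complement of $T_xX^{[\h]}=W$ is symplectic. Fix a $G_x$-invariant compatible inner product on $T_xX$ (automatically $H$-invariant) and split orthogonally $T_xX=\ker(\omega_x)\oplus F$, so that $F$ is a unitary $H$-module. Since $H$ preserves this splitting, the condition $\xi v\in\ker(\omega_x)$ is equivalent to $\xi v_F=0$ on the $F$-component of $v$; hence $W=\ker(\omega_x)\oplus F^H$. The orthogonal complement of $W$ in $T_xX$ is therefore the Hermitian orthogonal of $F^H$ inside $F$, which as a unitary $H$-submodule of $F$ is automatically symplectic with respect to $\omega_x$. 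The main obstacle is the first stage: verifying the identity $\h+\n_x=\h'+\n_x$, which is where the cleanness hypothesis is genuinely used. Once $\h$ has been pushed inside $\g_x$, the remaining work is essentially the classical symplectic Morse--Bott analysis on the symplectic part of the linear model.
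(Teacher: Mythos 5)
Your proposal is correct and follows essentially the same route as the paper's proof: replace $\h$ by $\lie{f}=(\h+\n_x)\cap\g_x$ using cleanness together with \eqref{equation;critical-algebra}, linearise at $x$ via the equivariant Darboux theorem (Corollary \ref{corollary;linear}), and identify the critical set in the linear model as $\ker(\omega_x)\oplus E_1^{\lie{f}}$, whose orthogonal complement is a Hermitian (hence symplectic) subspace of the unitary part. The only cosmetic difference is your extra step of passing to the closure of the subgroup generated by $\lie{f}$, which the paper sidesteps by working directly with the linear $\lie{f}$-action on the unitary $G_x$-module $E_1$.
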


\begin{proof}
By Lemmas \ref{lemma;clean-x}\eqref{item;stalk} and
\ref{lemma;fixed}\eqref{item;fixed-leaf} the cleanness assumption
implies that $\h\subset\g_{\bar{x}}=\g_x+\n_x$.  That is to say,
$\h+\n_x=\lie{f}+\n_x$, where $\lie{f}$ is the subalgebra
$(\h+\n_x)\cap\g_x$ of $\g_x$.  Choose an open neighbourhood $U$ of
$x$ with $\n(U)=\n_x$.  Then
$$
X^{[\h]}\cap U=X^{[\h+\n_x]}\cap U=X^{[\lie{f}+\n_x]}\cap
U=X^{[\lie{f}]}\cap U
$$
by \eqref{equation;critical-algebra}, so we may just as well replace
$\h$ with $\lie{f}$.  The advantage of the subalgebra $\lie{f}$ is
that it fixes $x$ and therefore acts linearly in a $G_x$-equivariant
Darboux chart centred at $x$.  More precisely, the equivariant Darboux
Theorem, Corollary \ref{corollary;linear}, allows us to replace $X$
with the presymplectic $G_x$-module $E=T_xX$ and the functions
$\Phi^\eta$ for $\eta\in\lie{f}$ with the quadratic forms
$T_x^2\Phi^\eta$.  Choosing a $G_x$-invariant compatible inner product
on $E$, we have that $E=E_0\oplus E_1$ is an orthogonal direct sum of
a $G_x$-module $E_0$ and a unitary $G_x$-module $E_1$.  The critical
set is then $X^{[\lie{f}]}=E_0\oplus E_1^{\lie{f}}$, where
$E_1^{\lie{f}}$ denotes the $\lie{f}$-fixed subspace of $E_1$.  This
shows that $X^{[\lie{f}]}$ is a submanifold.  The orthogonal
complement of $X^{[\lie{f}]}$ is a unitary submodule of $E_1$, and in
particular it is symplectic.
\end{proof}  

Taking $\h=\R\xi$ gives the following result.

\begin{subtheorem}
Let $\xi\in\g$ and let $x\in X^{[\xi]}$ be a critical point of
$\Phi^\xi$.  Assume that the $G$-action on $X$ is clean at $x$.  Then
$x$ is nondegenerate in the sense of Bott.  Choose a $G_x$-invariant
compatible inner product on $T_xX$.  Then the positive and negative
subspaces of $T_x^2\Phi^\xi$ are symplectic subspaces of $T_xX$.  In
particular the index of $\Phi^\xi$ at $x$ is even.
\end{subtheorem}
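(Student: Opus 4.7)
The plan is to apply Proposition \ref{proposition;critical} with $\h=\R\xi$. That proposition already yields two of the three ingredients needed: $X^{[\xi]}$ is a submanifold at $x$, and the orthogonal complement of $T_xX^{[\xi]}$ in $T_xX$ with respect to any $G_x$-invariant compatible inner product is a symplectic subspace. What remains is to identify $\ker(T_x^2\Phi^\xi)$ with $T_xX^{[\xi]}$ and to see that the positive and negative subspaces of $T_x^2\Phi^\xi$ are complex (hence symplectic) subspaces of $T_xX$.

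First I would use cleanness to replace $\xi$ by an element actually fixing $x$, exactly as in the proof of Proposition \ref{proposition;critical}. By Lemma \ref{lemma;fixed}\eqref{item;critical} the hypothesis $x\in X^{[\xi]}$ gives $\xi\in\g_{\bar x}$, and Lemma \ref{lemma;clean-x}\eqref{item;stalk} refines this to $\g_{\bar x}=\g_x+\n_x$, so we can write $\xi=\eta+\zeta$ with $\eta\in\g_x$ and $\zeta\in\n_x$. By \eqref{equation;critical}, both the critical set near $x$ and the Hessian $T_x^2\Phi^\xi$ are unchanged when $\xi$ is replaced by $\eta$, so we may assume $\xi=\eta$ fixes $x$. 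The equivariant presymplectic Darboux theorem (Corollary \ref{corollary;linear}) then lets us linearize: we may model a neighbourhood of $x$ by the presymplectic $G_x$-module $(T_xX,\omega_x)$ with $\Phi^\eta$ equal to the quadratic polynomial $\Phi^\eta(x)+\tfrac12\omega_x(\eta\,\cdot,\cdot)$.

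Fix a $G_x$-invariant compatible inner product on $T_xX$ and decompose it orthogonally as $T_xX=E_0\oplus E_1$ with $E_0=\ker\omega_x$ and $E_1$ a unitary $G_x$-module, whose complex structure $J$ satisfies $\omega_x(Ju,v)=\inner{u,v}$. The Hessian $T_x^2\Phi^\eta(v)=\tfrac12\omega_x(\eta v,v)$ vanishes identically on $E_0$, while on $E_1$ the operator $\eta$ is skew-Hermitian, hence $\C$-diagonalizable with purely imaginary spectrum. This gives an orthogonal weight decomposition $E_1=\bigoplus_k E_1(\lambda_k)$ into complex subspaces on which $\eta$ acts as multiplication by $i\lambda_k$; the identity $\omega_x(iv,v)=\|v\|^2$ then shows the Hessian restricts on $E_1(\lambda_k)$ to $\tfrac{\lambda_k}{2}\|\cdot\|^2$. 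Consequently $\ker(T_x^2\Phi^\xi)=E_0\oplus E_1(0)=T_xX^{[\eta]}=T_xX^{[\xi]}$, which gives Bott nondegeneracy; the positive subspace $\bigoplus_{\lambda_k>0}E_1(\lambda_k)$ and the negative subspace $\bigoplus_{\lambda_k<0}E_1(\lambda_k)$ are $J$-invariant subspaces of $E_1$ and therefore symplectic subspaces of $T_xX$; and since they are complex they have even real dimension, proving that the index is even.

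The main obstacle is conceptual rather than computational: without cleanness we cannot in general split $\xi$ as $\eta+\zeta$ with $\eta\in\g_x$, and then the Darboux linearization is unavailable because $\xi$ need not fix $x$. Once that reduction is made, everything reduces to the classical skew-Hermitian weight decomposition on the unitary summand $E_1$.
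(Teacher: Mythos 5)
Your proposal is correct and follows essentially the same route as the paper's own proof: split $\xi=\eta+\zeta$ with $\eta\in\g_x$, $\zeta\in\n_x$ using cleanness, invoke \eqref{equation;critical} to reduce to $\eta$, linearize via the equivariant Darboux theorem, and read off the Hessian from the weight decomposition of the unitary summand $E_1$. Your write-up merely makes explicit the computation $T_x^2\Phi^\eta=\tfrac{\lambda_k}{2}\norm{\cdot}^2$ on each weight space, which the paper leaves implicit.
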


\begin{proof}
It follows from Proposition \ref{proposition;critical} that
$X^{[\xi]}$ is a submanifold at $x$.  We argue nondegeneracy by
writing $\xi=\eta+\zeta$ with $\eta\in\g_x$ and $\zeta\in\n_x$.  Then
$X^{[\xi]}=X^{[\eta]}$ near $x$ and $T_x^2\Phi^\xi=T_x^2\Phi^\eta$
because of \eqref{equation;critical}, and the vector field $\eta_X$ is
linear in an equivariant Darboux chart at $x$.  With the same notation
as in the proof of the proposition, the subspace of $T_xX$ orthogonal
to $X^{[\eta]}$ is the sum of the nonzero weight spaces of the unitary
module $E_1$.  In other words, $(T_xX^{[\eta]})^\perp=E_1^+\oplus
E_1^-$, where $E_1^+$, resp.\ $E_1^-$, is spanned by all positive,
resp.\ negative, weight vectors, i.e.\ vectors $e\in E_1$ satisfying
$\xi(e)=\sqrt{-1}\alpha e$ for some $\alpha>0$, resp.\ $\alpha<0$.  On
$E_1^+$ the Hessian of $\Phi^\eta$ is positive definite, on $E_1^-$ it
is negative definite.
\end{proof}  

Assuming cleanness at all points of $X$ we obtain the next statement,
which is the main result of this section.

\begin{subtheorem}\label{theorem;morse-bott}
Assume that the $G$-action on $X$ is clean.  Then for every $\xi\in\g$
the component $\Phi^\xi$ of the moment map is a Morse-Bott function.
The positive and negative normal bundles of the critical set
$X^{[\xi]}$, taken with respect to a $G$-invariant compatible
Riemannian metric on $X$, are symplectic subbundles of $TX$ orthogonal
to the subbundle $T\ca{F}$.
\end{subtheorem}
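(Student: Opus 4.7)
The plan is to bootstrap from the previous subtheorem (which gives all the hard linear-algebraic content at a single critical point) to a global statement about $\Phi^\xi$ and its normal bundles.

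First I would observe that the Morse--Bott property is, by definition, a pointwise condition: $\Phi^\xi$ is Morse--Bott iff every point of $\Gamma_{\Phi^\xi}=X^{[\xi]}$ is nondegenerate in the sense of Bott. Under cleanness at every $x\in X$, the preceding subtheorem applies at each critical point and delivers exactly this. So part one of Theorem~\ref{theorem;morse-bott} is immediate.

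Next I would address orthogonality of the normal bundle of $X^{[\xi]}$ to $T\ca{F}$. Fix $x\in X^{[\xi]}$. By Lemma~\ref{lemma;fixed}\eqref{item;critical}, $\xi\in\g_{\bar x}$, so the one-parameter subgroup generated by $\xi$ preserves the leaf $\ca{F}(x)$. Hence for every $y\in\ca{F}(x)$ we have $\xi_X(y)\in T_y\ca{F}$, so $y\in X^{[\xi]}$ again by Lemma~\ref{lemma;fixed}\eqref{item;critical}. Therefore $\ca{F}(x)\subset X^{[\xi]}$, which infinitesimally says $T_x\ca{F}\subset T_xX^{[\xi]}$ at every critical point. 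Taking a $G$-invariant compatible Riemannian metric, the normal bundle $NX^{[\xi]}$ is the orthogonal complement of $TX^{[\xi]}$ inside $TX|_{X^{[\xi]}}$, so $NX^{[\xi]}\perp T\ca{F}$.

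Then I would define the positive and negative normal bundles as the subbundles of $NX^{[\xi]}$ on which the normal Hessian of $\Phi^\xi$, transported via the chosen metric to a self-adjoint endomorphism, has positive, respectively negative, eigenvalues. That these are smooth subbundles is the standard fact that a smooth field of nondegenerate symmetric endomorphisms on a vector bundle splits smoothly according to sign; nondegeneracy of the normal Hessian is the Morse--Bott condition established above, so there is no zero eigenvalue to worry about. It remains only to check that each of these subbundles is symplectic, which is a pointwise property. But at any $x\in X^{[\xi]}$ the preceding subtheorem, applied with the decomposition $\xi=\eta+\zeta$ for $\eta\in\g_x$ and $\zeta\in\n_x$, identifies the positive (respectively negative) eigenspace of $T_x^2\Phi^\xi=T_x^2\Phi^\eta$ with the positive (respectively negative) weight subspace of the linear $\eta$-action on the unitary module $E_1\subset T_xX$; these are complex, hence symplectic, subspaces of $E_1$. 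Thus each fibre is symplectic, and combined with smoothness this upgrades each subbundle to a symplectic subbundle of $TX$.

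The main technical point I would expect to have to handle with care is the splitting $\xi=\eta+\zeta$, which is not canonical (the decomposition $\g_{\bar x}=\g_x+\n_x$ of Lemma~\ref{lemma;clean-x}\eqref{item;stalk} is not direct in general, and $\g_x$ varies with $x$). The way around this is to note that the bundles in question are defined directly from the intrinsic Hessian of $\Phi^\xi$, so the pointwise splittings only serve as a computational device for identifying signatures and symplecticity at each fibre; no global smoothness of the splitting is needed.
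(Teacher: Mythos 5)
Your proposal is correct and follows the same route as the paper, which states Theorem~\ref{theorem;morse-bott} as the immediate globalization of the preceding pointwise subtheorem without further argument. The details you supply --- the inclusion $\ca{F}(x)\subset X^{[\xi]}$ via Lemma~\ref{lemma;fixed}\eqref{item;critical} to get orthogonality to $T\ca{F}$ (which in the paper is implicit in the local computation $X^{[\lie{f}]}=E_0\oplus E_1^{\lie{f}}$ with $E_1=(T_x\ca{F})^\perp$), the smoothness of the sign splitting of the normal Hessian, and the remark that the non-canonical decomposition $\xi=\eta+\zeta$ is only a pointwise computational device --- are exactly the right ones and are consistent with the paper's setup.
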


In the symplectic case this result goes back to Atiyah
\cite{atiyah;convexity-commuting} and Guillemin and Sternberg
\cite{guillemin-sternberg;convexity;;1982}.  For leafwise transitive
Hamiltonian circle actions on K-contact manifolds the result was
proved by Rukimbira \cite{rukimbira;contact-minimal-characteristics}.
His result was extended to leafwise transitive presymplectic
Hamiltonian torus actions by Ishida \cite{ishida;transverse-kaehler}.
\end{alinea}

\begin{alinea}\label{alinea;abelian}
If $G$ is a torus and $X$ is compact symplectic, then the vertices of
the moment polytope $\Phi(X)$ are images of $G$-fixed points.  In the
presymplectic case there may not be any fixed points.  Instead one
needs to consider the critical points of the moment map, or
equivalently the $G$-fixed \emph{leaves}.  Moreover, we can weaken the
assumption that $G$ is abelian to the assumption that the quotient
$G/N(X)$ is abelian, or equivalently that the null ideal $\n(X)$
contains the derived subalgebra of $\g$.  By Proposition
\ref{proposition;affine}, then the moment map image is contained in
$\lie{z}(\g)^*$, the dual of the centre $\lie{z}(\g)$ of $\g$, and we
have $\Delta(X)=\Phi(X)$.

\begin{subtheorem}[abelian convexity]\label{theorem;vertex}
Assume that the null ideal $\n(X)$ contains the derived subalgebra
$[\g,\g]$, that $X$ is compact, and that the $G$-action on $X$ is
clean.  Then $\Phi\bigl(X^{[\g]}\bigr)$ is a finite subset of $\g^*$
and $\Phi(X)$ is the convex hull of $\Phi\bigl(X^{[\g]}\bigr)$.  For
every vertex $\lambda$ of $\Phi(X)$ the fibre $\Phi^{-1}(\lambda)$ is
a connected component of $X^{[\g]}$.
\end{subtheorem}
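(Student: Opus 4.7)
The plan is to reduce to a standard compact-torus situation via the abelian quotient $\bar\g:=\g/\n(X)$, and then use the Morse--Bott property of Theorem \ref{theorem;morse-bott} to pin down the vertices of the polytope.

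First, I would observe that the hypothesis $[\g,\g]\subset\n(X)$ makes $\bar\g$ abelian and forces the coadjoint $G$-action on $\n(X)^\circ$ to be trivial, since for $\eta\in\n(X)^\circ$ and $\xi,\zeta\in\g$ one has $\inner{\ad^*(\xi)\eta,\zeta}=-\inner{\eta,[\xi,\zeta]}=0$. By Proposition \ref{proposition;affine}\eqref{item;affine}, after translating $\Phi$ by a $G$-fixed element we may assume $\Phi(X)\subset\n(X)^\circ\subset(\g^*)^G\subset C$. Consequently $\Phi(X)=\Delta(X)$, and Theorem \ref{theorem;convex} together with compactness of $X$ makes $\Phi(X)$ a convex polytope living naturally in the vector space $\n(X)^\circ\cong\bar\g^*$. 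Next I would verify that $X^{[\g]}$ is a closed submanifold with finitely many connected components, on each of which $\Phi$ is constant: Proposition \ref{proposition;critical} applied with $\h=\g$ (using global cleanness) makes $X^{[\g]}$ a submanifold, compactness of $X$ gives finiteness of components, and the identity $d\Phi^\xi|_{X^{[\g]}}=0$ for every $\xi\in\g$ shows $\Phi$ is locally constant there. This yields that $\Phi(X^{[\g]})$ is finite.

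The central step is to show $\Phi(X)=\func{conv}\,\Phi(X^{[\g]})$. The key observation is that, since $G$ and $X$ are compact, there are only finitely many orbit-type strata and hence only finitely many conjugacy classes of stabilizer algebras $\g_x$; because the adjoint action of $G$ on $\bar\g$ is trivial, the images $\bar\g_x:=(\g_x+\n(X))/\n(X)$ assume only finitely many values, and the union of the proper ones is a closed nowhere-dense cone in $\bar\g$. For $\bar\xi\in\bar\g$ in the dense open complement, Lemma \ref{lemma;fixed}\eqref{item;critical} combined with Corollary \ref{corollary;clean}\eqref{item;clean-global} yields $X^{[\xi]}=X^{[\g]}$, so by Theorem \ref{theorem;morse-bott} the Morse--Bott function $\Phi^\xi$ attains its maximum on $X^{[\g]}$. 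Hence the support number $\sup_{\Phi(X)}\inner{\cdot,\bar\xi}$ equals $\max_{\Phi(X^{[\g]})}\inner{\cdot,\bar\xi}$ for $\bar\xi$ in a dense set; by continuity of support functions this equality extends to all of $\bar\g$, forcing $\Phi(X)=\func{conv}\,\Phi(X^{[\g]})$.

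Finally, for the vertex claim: if $\lambda$ is a vertex of $\Phi(X)$, the directions $\bar\xi\in\bar\g$ for which $\inner{\cdot,\bar\xi}$ attains its maximum on $\Phi(X)$ uniquely at $\lambda$ form a nonempty open cone that spans $\bar\g$. For each such $\xi$ the fibre $\Phi^{-1}(\lambda)$ lies in $X^{[\xi]}$; intersecting over a spanning family of $\bar\xi$'s gives $\Phi^{-1}(\lambda)\subset\bigcap_\xi X^{[\xi]}=X^{[\g]}$. Since $\Phi^{-1}(\lambda)$ is connected by Theorem \ref{theorem;convex}\eqref{item;open}, it coincides with a single connected component of $X^{[\g]}$. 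The principal obstacle is the finiteness argument for $\{\bar\g_x\}$ used in the third paragraph: it is what buys a dense open set of generic $\bar\xi$'s with $X^{[\xi]}=X^{[\g]}$, and it relies essentially on the compactness of $X$ and $G$ combined with the triviality of the adjoint action on the abelian quotient $\bar\g$.
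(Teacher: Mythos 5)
Your proposal is correct, but it reaches the convex-hull identity by a genuinely different route from the paper. The paper extracts everything from the polytope structure supplied by Theorem \ref{theorem;convex}: since $\Phi(X)$ is a compact convex polyhedral set, it is the convex hull of its finitely many vertices, so the entire content of the proof is your fourth paragraph --- for a vertex $\lambda$, every $x\in\Phi^{-1}(\lambda)$ is a global extremum of $\Phi^\xi$ for an open, spanning cone of directions $\xi$, hence critical for a spanning set and therefore in $X^{[\g]}$. This single argument yields both the vertex-fibre claim and the inclusion of the vertex set into $\Phi\bigl(X^{[\g]}\bigr)$, whence $\Phi(X)=\func{conv}(\text{vertices})\subset\func{conv}\Phi\bigl(X^{[\g]}\bigr)\subset\Phi(X)$. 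You instead prove the convex-hull identity directly by matching support functions over a dense set of directions $\bar\xi$ with $X^{[\xi]}=X^{[\g]}$, which requires the extra input that the subspaces $\bar\g_x=(\g_x+\n(X))/\n(X)$ take only finitely many values (finiteness of orbit types together with the triviality of the adjoint action on $\g/\n(X)$). That genericity argument is valid and is closer in spirit to the classical Atiyah/Guillemin--Sternberg proofs; it has the side benefit of identifying $X^{[\g]}$ as the critical set of $\Phi^\xi$ for generic $\xi$, at the cost of an ingredient the paper's shorter derivation does not need. Note also that in your third paragraph Theorem \ref{theorem;morse-bott} is not actually used: the only fact needed is that a maximum of a smooth function on a compact manifold is a critical point.

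One small point in the last step: connectedness of $\Phi^{-1}(\lambda)$ only shows that the fibre is \emph{contained in} a single component of $X^{[\g]}$. To get equality you must also invoke the constancy of $\Phi$ on the components of $X^{[\g]}$, which you established in your second paragraph: the component containing the fibre then maps entirely to $\lambda$, hence is contained in $\Phi^{-1}(\lambda)$, giving the reverse inclusion.
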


\begin{proof}
It follows from Theorem \ref{theorem;convex} that $\Phi(X)$ is a
convex polytope and therefore equal to the convex hull of its
vertices.  It follows from Proposition \ref{proposition;critical} that
$X^{[\g]}$ is a closed submanifold of $X$, and therefore has a finite
number of connected components.  The moment map is constant on each
component, so $\Phi(X^{[\g]})$ is finite.  Let $\lambda$ be a vertex
of $\Phi(X)$ and let $x\in\Phi^{-1}(\lambda)$.  Then there exists an
open subset $\Xi$ of $\t$ with the property that for every $\xi\in\Xi$
the function $\Phi^\xi$ attains its global minimum at $x$.  Hence
$T_x\Phi^\xi=0$ and $T_x^2\Phi^\xi$ is positive semidefinite for all
$\xi\in\Xi$.  Because $\Xi$ spans $\t$, this implies that $T_x\Phi=0$,
i.e.\ $x\in X^{[\g]}$.  By Theorem \ref{theorem;morse-bott}, for all
$\xi\in\Xi$ the Hessian of $\Phi^\xi$ at $x$ is positive definite in
the direction normal to $X^{[\g]}$.  Computing in an equivariant
Darboux chart $U$ at $x$ we see that the portion
$U\cap\Phi^{-1}(\lambda)$ of the fibre $\Phi^{-1}(\lambda)$ is
contained in $X^{[\g]}$.  Therefore the entire fibre is contained in
$X^{[\g]}$.  Since the fibre is connected (Theorem
\ref{theorem;convex}), it is equal to a component of $X^{[\g]}$.
\end{proof}

\end{alinea}

%%%%%%%%%%%%%%%%%%%%%%%%%%%%%%%%%%%%%%%%%%%%%%%%%%%%%%%%%%%%%%%%%%%%%%%%
\section{Examples}\label{section;example}
%%%%%%%%%%%%%%%%%%%%%%%%%%%%%%%%%%%%%%%%%%%%%%%%%%%%%%%%%%%%%%%%%%%%%%%%

\begin{alinea}
In this section $G$ denotes a compact connected Lie group, $\t$ a
Cartan subalgebra of $\g$ and $C$ a chamber in $\t^*$.
\end{alinea}

\begin{alinea}[Failure of convexity]\label{alinea;failure}
Z.~He \cite[Ch.\ 4]{he;odd-dimensional} gave the first example of a
presymplectic Hamiltonian torus action with a nonconvex moment map
image.  Here we show that such examples are ubiquitous.  In particular
presymplectic Hamiltonian actions are typically not clean.  Our
starting point is the following elementary fact, which is implicit in
\cite[Theorem 26.4]{guillemin-sternberg;techniques;;1990}.  (Cf.\ also
Proposition \ref{proposition;coisotropic}.)

\begin{sublemma}
Let $(M,\pi_M)$ and $(N,\pi_N)$ be Poisson manifolds and $\phi\colon M\to
N$ a Poisson morphism.  Let $Y$ be a coisotropic submanifold of $N$
that intersects $\phi$ cleanly.  Then $X=\phi^{-1}(Y)$ is a coisotropic
submanifold of $M$.
\end{sublemma}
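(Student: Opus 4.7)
The plan is to verify the defining condition for a coisotropic submanifold pointwise: at each $x \in X$, the image $\pi_M^\sharp((T_xX)^\circ)$ should lie in $T_xX$. The proof will combine three ingredients: the linear-algebraic consequences of clean intersection, the infinitesimal form of the Poisson-morphism hypothesis, and the coisotropy of $Y$.

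First I would unpack clean intersection. By definition, $X = \phi^{-1}(Y)$ is a submanifold of $M$ with tangent spaces $T_xX = (T_x\phi)^{-1}(T_{\phi(x)}Y)$ for every $x \in X$. A short dimension count (using that $\dim (T_xX)^\circ = \dim f(V) - \dim(K \cap f(V))$ for $f = T_x\phi$, $K = T_{\phi(x)}Y$, and similarly $\dim (T_x\phi)^*(K^\circ) = \dim(K + f(V)) - \dim K$) yields the key identity
\begin{equation*}
(T_xX)^\circ = (T_x\phi)^*\bigl((T_{\phi(x)}Y)^\circ\bigr).
\end{equation*}
In other words, every covector annihilating $T_xX$ comes by pullback from a covector annihilating $T_{\phi(x)}Y$.

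Next I would invoke the Poisson-morphism property in its infinitesimal guise: for every $\beta \in T_{\phi(x)}^*N$,
\begin{equation*}
T_x\phi\bigl(\pi_M^\sharp((T_x\phi)^*\beta)\bigr) = \pi_N^\sharp(\beta).
\end{equation*}
This is equivalent to the bracket identity $\{f\circ\phi, g\circ\phi\}_M = \{f,g\}_N\circ\phi$, since both sides equal the pairing of the stated vector with the differential of a test function. Given $\alpha \in (T_xX)^\circ$, write $\alpha = (T_x\phi)^*\beta$ with $\beta \in (T_{\phi(x)}Y)^\circ$ by the identity above. Because $Y$ is coisotropic, $\pi_N^\sharp(\beta) \in T_{\phi(x)}Y$, so
\begin{equation*}
T_x\phi\bigl(\pi_M^\sharp(\alpha)\bigr) = \pi_N^\sharp(\beta) \in T_{\phi(x)}Y,
\end{equation*}
which is to say $\pi_M^\sharp(\alpha) \in (T_x\phi)^{-1}(T_{\phi(x)}Y) = T_xX$. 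Since $\alpha$ was an arbitrary element of $(T_xX)^\circ$, this proves that $X$ is coisotropic.

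The only step that requires care, and that I would highlight as the main point worth writing out, is the dimension count establishing $(T_xX)^\circ = (T_x\phi)^*((T_{\phi(x)}Y)^\circ)$: the inclusion $\supset$ is immediate from $T_x\phi(T_xX) \subset T_{\phi(x)}Y$, but equality uses clean intersection (which guarantees that the preimage has the expected tangent space, not merely a subspace of it). Everything else is formal manipulation of the Poisson-morphism identity and the definition of coisotropy.
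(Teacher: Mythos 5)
Your proof is correct and follows essentially the same route as the paper's: both reduce the claim to the identity $(T_xX)^\circ=(T_x\phi)^*\bigl((T_{\phi(x)}Y)^\circ\bigr)$ supplied by clean intersection, then push forward through the Poisson-morphism identity $T\phi\circ\pi_M^\sharp\circ T^*\phi=\pi_N^\sharp$ and use the coisotropy of $Y$. The only difference is that you spell out the dimension count behind the annihilator identity, which the paper asserts without proof; your count is accurate.
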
  

\begin{proof}
That $\phi$ is a Poisson morphism means by definition that the square
$$
\xymatrix@H=1.5em{
TM\ar[r]^{T\phi}&\phi^*TN
\\
T^*M\ar[u]^{\pi_M^\sharp}&\phi^*T^*N\ar[l]_{T^*\phi}\ar[u]_{\phi^*\pi_N^\sharp}
}
$$
commutes.  That $Y$ is coisotropic means that $\pi_N^\sharp(T^\circ
Y)$ is contained in $TY$, where $T^\circ Y$ denotes the annihilator of
$TY$ in $T^*M|_Y$.  That $Y$ intersects $\phi$ cleanly means that $X$
is a submanifold with tangent bundle $TX=(T\phi)^{-1}(\phi^*TY)$.
Therefore the annihilator of $TX$ is $T^\circ X=T^*\phi(\phi^*T^\circ
Y)$.  From $\pi_N^\sharp(T^\circ Y)\subset TY$ we infer
$$
T\phi\circ\pi_M^\sharp\circ T^*\phi(\phi^*T^\circ Y)\subset\phi^*TY,
$$
and hence $T\phi(\pi_M^\sharp(T^\circ X))\subset \phi^*TY$.  We
conclude that $\pi_M^\sharp(T^\circ X)\subset TX$.
\end{proof}

Let us apply this result to a symplectic Hamiltonian $G$-manifold $M$
with symplectic form $\omega_M$, taking $N$ to be the linear Poisson
manifold $\g^*$ and $\phi$ to be the moment map.  Any $G$-invariant
submanifold $Y$ of $\g^*$ is coisotropic.  Therefore, if $Y$
intersects $\phi$ cleanly, its preimage $X=\phi^{-1}(Y)$ is a
coisotropic submanifold of $M$.  It follows that the closed $2$-form
$\omega=\omega_M|_X$ has constant corank equal to the codimension of
$X$ in $M$.  The moment map is equivariant, so $X$ is preserved by the
$G$-action, and the $G$-action on $X$ is Hamiltonian with moment map
$\Phi=\phi|_M$.  Thus $X$ is a presymplectic Hamiltonian $G$-manifold.
Its moment map image is
$$\Phi(X)=\phi(\phi^{-1}(Y))=Y\cap\phi(M).$$
It is easy to choose $Y$ in such a manner that $\Delta(X)=\Phi(X)\cap
C$ is not convex.

For a specific example let $\T=\R/2\pi\Z$ be the circle and let
$G=\T^d$ be the $d$-torus acting on $M=\C^d$ in the standard way,
$$
t\cdot x=(t_1,t_2,\dots,t_d)\cdot(x_1,x_2,\dots,x_d)=
\bigl(e^{it_1}x_1,e^{it_2}x_2,\dots,e^{it_d}x_d\bigr).
$$
Then $\g=\R^d$ and $\g^*=(\R^d)^*\cong\R^d$, and the map
$\phi\colon\C^d\to\R^d$ defined by
$\phi(x)=\frac12\bigl(\abs{x_1}^2,\abs{x_2}^2,\dots,\abs{x_d}^2\bigr)$
is a moment map for this action with respect to the standard
symplectic form $\omega_0=\frac1{2i}\sum_jdx_j\wedge d\bar{x}_j$.  Let
$Y$ be a submanifold of $\R^d$ of codimension $k$ which is transverse
to the faces of the orthant $\R^d_{\ge0}$.  Then $Y$ is transverse to
$\phi$, so $X=\phi^{-1}(Y)$ is a submanifold of $\C^d$ of (real)
codimension $k$.  Obviously $Y$ is coisotropic with respect to the
zero Poisson structure, so $X$ is a presymplectic Hamiltonian
$G$-manifold with presymplectic form of corank $k$.  Its moment map
image is the intersection of $Y$ with the orthant,
\begin{equation}\label{equation;orthant}
\Phi(X)=\phi(\phi^{-1}(Y))=Y\cap\R^d_{\ge0},
\end{equation}
which is a $d-k$-manifold with corners, but is of course seldom
convex.

For instance, the following curve in the positive quadrant ($d=2$,
$k=1$) is the moment map image of a $\T^2$-action on a presymplectic
$3$-sphere.

\begin{center}
\includegraphics{nonconvex-1.mps}
\end{center}

This class of examples displays some other phenomena of interest, such
as the existence of nontrivial deformations of presymplectic
Hamiltonian actions.  The equivariant Darboux theorem implies that
symplectic Hamiltonian $G$-manifolds cannot be continuously deformed
locally near any point.  The Moser stability theorem implies that the
same is true globally for compact symplectic Hamiltonian $G$-manifolds
as long as we move the symplectic form within a fixed cohomology
class.  We now show that both these statements are false for
presymplectic Hamiltonian manifolds.  (However, we will prove in
Appendix \ref{section;normal} that there is a presymplectic
equivariant Darboux theorem under the assumption that the action is
clean.)  Take any isotopic family $(Y_t)_{0\le t\le1}$ of compact
submanifolds of $\R^d$, all of which are transverse to $\phi$.  Then
the manifolds $X_t=\phi^{-1}(Y_t)$ form an isotopic family of compact
submanifolds of $\C^d$, each of which is a presymplectic Hamiltonian
$G$-manifold with presymplectic form $\omega_t=\omega_0|_{X_t}$ and
moment map $\Phi_t=\phi|_{X_t}$.  The forms $\omega_t$ on $X_t$ are
exact for all $t$.  The fibres $X_t$ are equivariantly diffeomorphic,
but they are usually not isomorphic as presymplectic $G$-manifolds.
Indeed, if there existed $G$-equivariant diffeomorphisms $f_t\colon
X_0\to X_t$ satisfying $f_t^*\omega_t=\omega_0$, then we would have
$f_t^*\Phi_t=\Phi_0+\lambda_t$ for some $\lambda_t\in\g^*$, and
therefore $\Phi_t(X_t)=\Phi_0(X_0)+\lambda_t$, which by
\eqref{equation;orthant} would imply
$Y_t\cap\R_{\ge0}=Y_0\cap\R_{\ge0}+\lambda_t$.  So by choosing the
manifolds $Y_t$ in such a way that the intersections
$Y_t\cap\R^d_{\ge0}$ are not translates of each other we can guarantee
that the presymplectic $G$-manifolds $X_t$ are not all isomorphic.

For instance, the $3$-sphere represented by the picture above can be
smoothly deformed to an ellipsoid, the moment map image of which is an
interval.  This deformation is not equivariantly presymplectically
trivial.

\begin{center}
\includegraphics{nonconvex-2.mps}
\end{center}

Another feature of these examples is that the components of the moment
map $\Phi$ are often not Morse-Bott functions.  For instance, let us
take $Y$ to be a smooth curve in the plane $\R^2$ with the following
properties: (1)~$Y$ is transverse to the coordinate axes;
(2)~$Y\cap\R^2_{\ge0}$ is compact; and (3)~the set $C$ consisting of
all points $y\in Y$ with horizontal tangent line $T_yY$ is countably
infinite and is contained in the open orthant $\R^2_{>0}$.  Then
$X=\phi^{-1}(Y)$ is a compact real hypersurface in $\C^2$.  The
critical set of the second component of the moment map
$\Phi=(\Phi_1,\Phi_2)\colon X\to\R^2$ is equal to
$\Phi^{-1}(C)=\bigcup_{y\in C}\Phi^{-1}(y)$, an infinite disjoint
union of codimension $1$ submanifolds of $X$, and therefore is not a
submanifold.
\end{alinea}

\begin{alinea}[Prato's toric quasifolds]\label{alinea;prato}
We continue the discussion of the previous subsection, but now we take
the submanifold $Y$ in \eqref{equation;orthant} to be an affine
subspace of $\R^d$ transverse to the orthant.  Then the intersection
$P=\Phi(X)=Y\cap\R^d_{\ge0}$ is a convex polyhedron.  The
transversality to the orthant is equivalent to $P$ being
\emph{simple}, i.e.\ the link of each of its faces being a simplex.
It follows from Proposition \ref{proposition;coisotropic} that the
action of $G=\T^d$ on the coisotropic submanifold $X$ of $\C^d$ is
leafwise transitive and that its null ideal $\n(X)$ is the linear
subspace of $\g=\R^d$ orthogonal to $Y$.  Prato
\cite{prato;non-rational-symplectic} calls the leaf space $X/N(X)$ a
\emph{toric quasifold} associated with $P$.  It carries an action of
the quotient group $\T^d/N(X)$ and a moment map whose image is $P$ and
whose fibres are the $\T^d/N(X)$-orbits.  See
\cite{ratiu-zung;presymplectic} for a classification of toric
quasifolds in terms of simple polyhedra.
\end{alinea}

\begin{alinea}[Orbifolds]\label{alinea;orbifolds}
As an illustration of our convexity theorem we present a new proof of
a result due to Lerman et al.\ \cite[Theorem
  1.1]{lerman-meinrenken-tolman-woodward;nonabelian-convexity}.

\begin{subtheorem}\label{theorem;orbifold}
Let $(M,\omega_M)$ be a symplectic orbifold equipped with a
Hamiltonian $G$-action and a proper moment map $\Phi_M$.  Then
$\Delta(M)=\Phi_M(M)\cap C$ is a rational convex polyhedral set.
\end{subtheorem}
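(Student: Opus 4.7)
The strategy is to reduce Theorem~\ref{theorem;orbifold} to the presymplectic convexity theorem (Theorem~\ref{theorem;convex}) by desingularizing the orbifold. Assume $M$ is connected (otherwise treat each component separately). By standard orbifold theory, $M$ can be realized as $X/H$, where $X$ is a connected smooth manifold, $H$ is a compact connected Lie group acting locally freely on $X$, and $\pi\colon X\to M$ is proper; one concrete choice is the bundle of oriented orthonormal frames of a $G$-invariant orbifold Riemannian metric, with $H=\group{SO}(\dim M)$, after passing to an orientable double cover of $M$ if needed. The pullback $\omega=\pi^*\omega_M$ is a closed $2$-form of constant rank $\dim M$, making $X$ a presymplectic manifold whose null foliation $\ca{F}$ coincides with the foliation by $H$-orbits. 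The $G$-action on $M$ lifts canonically to $X$ commuting with $H$, yielding a Hamiltonian action of $\tilde G=G\times H$ with moment map $\tilde\Phi=(\Phi,\Phi_H)$, where $\Phi=\pi^*\Phi_M$ and $\Phi_H\equiv 0$; the vanishing of $\Phi_H$ is legitimate because for $\eta\in\h$ the vector field $\eta_X$ lies in $\ker\omega$, so $d\Phi_H^\eta=\iota(\eta_X)\omega=0$ trivially, and equivariance of $\tilde\Phi$ holds because $H$ commutes with $G$.

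Next I verify the hypotheses of Theorem~\ref{theorem;convex} for the $\tilde G$-action on $X$. Cleanness at every $x\in X$ is immediate: $T_x\ca{F}=T_x(H\cdot x)\subset T_x(\tilde G\cdot x)$ and $\h\subset\n_x$ (any $\h$-induced vector field is everywhere tangent to $\ca{F}$), so $T_x(N_x\cdot x)\supset T_x(H\cdot x)=T_x(\tilde G\cdot x)\cap T_x\ca{F}$, and together with the standard reverse inclusion this gives \eqref{equation;clean}. The null ideal of the $\tilde G$-action is $\n(X)=\lie{k}\oplus\h$, where $\lie{k}$ is the Lie algebra of the (automatically closed) kernel $K$ of the $G$-action on $M$; hence $N(X)$ is closed in $\tilde G$. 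Properness of $\tilde\Phi=(\Phi,0)$ reduces to properness of $\Phi=\pi^*\Phi_M$, which follows from properness of $\Phi_M$ together with that of $\pi$.

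Theorem~\ref{theorem;convex} now yields that $\Delta_{\tilde G}(X)=\tilde\Phi(X)\cap(C\times C_H)$ is a rational closed convex polyhedral subset of $\t^*\oplus\t_H^*$, where $C_H$ denotes a closed Weyl chamber in the dual of a Cartan subalgebra of $\h$. Since $\Phi_H\equiv 0$ and the origin lies in every closed Weyl chamber, one has $\Delta_{\tilde G}(X)=(\Phi(X)\cap C)\times\{0\}=\Delta(M)\times\{0\}$, from which one immediately reads off that $\Delta(M)$ itself is a rational closed convex polyhedral subset of $\t^*$, as required. The main obstacle is not the convexity argument but the orbifold desingularization in the first step---in particular, the $G$-equivariant realization of $M$ as a global quotient $X/H$ by a compact connected Lie group acting locally freely, and the verification that the resulting $(X,\omega,\tilde\Phi)$ satisfies all the hypotheses of Theorem~\ref{theorem;convex}; once this setup is in place, the presymplectic convexity theorem does the rest of the work.
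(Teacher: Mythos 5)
Your proposal follows essentially the same route as the paper: pass to a frame bundle $X$ of $M$, which is a genuine manifold carrying the pulled-back presymplectic form whose null foliation is the orbit foliation of the structure group $H$, enlarge the symmetry group to $G\times H$ with moment map $(\pi^*\Phi_M,0)$, check that this action is leafwise transitive (hence clean) with closed null subgroup, and invoke Theorem~\ref{theorem;convex}. The paper uses the unitary frame bundle of $TM$ with respect to a compatible invariant metric (so $H=\U(n)$ with $n=\tfrac12\dim M$), which avoids your orientability detour since a symplectic orbifold is already oriented; that difference is cosmetic.

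The one genuine gap is at the very first step. The (orthonormal or unitary) frame bundle of an orbifold is a smooth manifold with locally free $H$-action only when the orbifold is \emph{effective}: if local isotropy groups act trivially on tangent spaces, they act trivially on frames as well, so the frame bundle is again an orbifold rather than a manifold. More generally, whether an arbitrary noneffective orbifold can be presented as a global quotient of a manifold by a locally free compact Lie group action is a nontrivial question (the presentation problem studied by Henriques and Metzler), so the blanket appeal to ``standard orbifold theory'' does not cover it. The fix is cheap and is exactly what the paper does: replace $M$ by its underlying effective orbifold $M_{\textrm{eff}}$, to which the symplectic form, the $G$-action and the moment map descend with the same image, and only then form the frame bundle. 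With that reduction inserted, your argument is complete and agrees with the paper's.
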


\begin{proof}
Let $M_{\textrm{eff}}$ be the effective orbifold which underlies $M$,
as defined e.g.\ in \cite{henriques-metzler;presentations-orbifolds}.
The symplectic structure and the Hamiltonian action descend to
$M_{\textrm{eff}}$, and $M_{\textrm{eff}}$ has the same moment map
image as $M$.  So we may assume without loss of generality that $M$ is
effective.  Choose a $G$-invariant Riemannian metric on $M$ compatible
with the symplectic form $\omega_M$.  This choice endows the tangent
bundle $TM$ with the structure of a Hermitian orbifold vector bundle.
Let $X$ be the unitary frame bundle of $TM$, which is an orbifold
principal bundle over $M$ with structure group $\U(n)$, where
$n=\frac12\dim(M)$.  Then $X$ is a smooth manifold (see
e.g.\ \cite[\S\,2.4]{moerdijk-mrcun;foliations-groupoids}), every
diffeomorphism of $M$ lifts naturally to a $\U(n)$-equivariant
diffeomorphism of $X$, and the $G$-action lifts to a $G$-action on $X$
which commutes with the $\U(n)$-action.  Let $p\colon X\to M$ be the
projection, $\omega=p^*\omega_M$ and $\Phi=p^*\Phi_M$.  Then $X$ is a
presymplectic Hamiltonian $G$-manifold with proper moment map $\Phi$.
The leaves of the null foliation of $X$ are the $\U(n)$-orbits.  The
$G$-action on $X$ is typically not clean (cf.\ Example
\ref{example;clean}), but the action of $\hat{G}=G\times\U(n)$ is
leafwise transitive and has moment map $\hat{\Phi}=\Phi\times0\colon
X\to\hat{\g}^*=\g^*\oplus\lie{u}(n)^*$.  Since the null foliation has
closed leaves, the null subgroup $\hat{N}(X)$ for the $\hat{G}$-action
is a closed subgroup of $\hat{G}$, which contains the subgroup
$\U(n)$.  We conclude from Theorem \ref{theorem;convex} that
$\Delta(M)=\Phi(X)\cap C=\hat{\Phi}(X)\cap C$ is a rational convex
polyhedral set.
\end{proof}

\end{alinea}

\begin{alinea}[Contact manifolds]\label{alinea;contact}
Let $X$ be a compact manifold and let $\alpha$ be a contact $1$-form
on $X$.  The exact $2$-form $\omega=-d\alpha$ is nondegenerate on the
contact hyperplane bundle $\ker(\alpha)$ and therefore is a
presymplectic form of corank $1$.  The null foliation $\ca{F}$ of
$\omega$ is spanned by the Reeb vector field, which is by definition
the unique vector field $\rho$ with the properties
$\iota(\rho)\omega=0$ and $\iota(\rho)\alpha=1$.  Suppose that $G$
acts on $X$ and leaves $\alpha$ invariant.  Then the action is
Hamiltonian with moment map $\Phi\colon X\to\g^*$ defined by
$\Phi^\xi=\iota(\xi_X)\alpha$.  The action is said to be of \emph{Reeb
  type} (see e.g.\ Boyer and Galicki
\cite[\S\,8.4.2]{boyer-galicki;sasakian-geometry}) if there exists a
Lie algebra element $\xi\in\g$ with the property that $\xi_X=\rho$.
Reeb-type actions are leafwise transitive.  The next result follows
immediately from the presymplectic convexity theorem.

\begin{subtheorem}[contact convexity theorem I]
\label{theorem;contact-convex}
Suppose that the $G$-action on $X$ is clean (e.g.\ of Reeb type).
Then $\Delta(X)=\Phi(X)\cap C$ is a convex polytope.
\end{subtheorem}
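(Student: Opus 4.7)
The plan is to apply Theorem \ref{theorem;convex} essentially verbatim to $(X,\omega,\Phi)$, treating the contact setup as a special case of the presymplectic framework developed in Section \ref{section;convex}.

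First I would verify that $(X,\omega,\Phi)$ satisfies the hypotheses of \ref{alinea;presymplectic}. The $2$-form $\omega=-d\alpha$ is closed and of corank $1$ everywhere (its kernel being the Reeb line bundle), hence presymplectic of constant rank. Since the $G$-action preserves $\alpha$, it preserves $\omega$. Using Cartan's magic formula together with $L_{\xi_X}\alpha=0$, I compute
\[
d\Phi^\xi \;=\; d(\iota(\xi_X)\alpha) \;=\; L_{\xi_X}\alpha - \iota(\xi_X)d\alpha \;=\; \iota(\xi_X)\omega,
\]
and equivariance of $\Phi$ follows from $G$-invariance of $\alpha$ together with $\Ad$-equivariance of $\xi \mapsto \xi_X$. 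So $\Phi$ is a genuine presymplectic moment map.

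Next I would observe that compactness of $X$ automatically gives properness of $\Phi$, and cleanness is assumed. Reducing to the case where $X$ is connected component by component (the other hypotheses and cleanness pass to each component), I apply Theorem \ref{theorem;convex}\eqref{item;convex} to conclude that $\Delta(X)=\Phi(X)\cap C$ is a closed convex polyhedral set. Because $X$ is compact, $\Phi(X)$ is compact, so $\Delta(X)$ is bounded, which by the observation recorded in \ref{alinea;convex} upgrades "closed convex polyhedral set" to "convex polytope."

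To justify the parenthetical "e.g.\ of Reeb type," I would record the following. If $\xi\in\g$ satisfies $\xi_X=\rho$, then $\iota(\xi_X)\omega=\iota(\rho)\omega=0$ identically on $X$, so $\xi\in\n(X)$. The leaf of $\ca{F}$ through any $x\in X$ is the integral curve of $\rho$, which coincides with the orbit of the one-parameter subgroup $\exp(\R\xi)\subset N_x$ through $x$. Therefore $\ca{F}(x)=N_x\cdot x$ and the action is leafwise transitive, which by \ref{alinea;clean} implies cleanness. There is no genuine obstacle in this proof; the theorem is an advertisement for, rather than a difficult application of, the general machinery of Section \ref{section;convex}.
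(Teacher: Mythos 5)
Your proposal is correct and takes essentially the same route as the paper, which derives this statement directly from Theorem \ref{theorem;convex} after noting (as you do) that $\omega=-d\alpha$ is presymplectic of corank $1$, that $\Phi^\xi=\iota(\xi_X)\alpha$ satisfies the moment map condition via Cartan's formula, and that Reeb-type actions are leafwise transitive and hence clean. One small caveat: your ``component by component'' reduction does not literally yield convexity of $\Delta(X)$ when $X$ is disconnected (a union of convex polytopes need not be convex), so, as in Theorem \ref{theorem;convex} and implicitly in the paper, one should take $X$ connected.
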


There are many compact contact Hamiltonian $G$-manifolds $X$ for which
$\Delta(X)$ is \emph{not} convex (and hence the action is not clean).
Here are two methods to produce examples of this.  (1)~Start with a
pair $(X,\alpha)$ for which $\Delta(X)$ is convex and then replace
$\alpha$ by a conformally equivalent contact form $e^f\alpha$ for some
$G$-invariant smooth function $f$.  This has the effect of multiplying
the moment map $\Phi$ by $e^f$, which will usually destroy the
convexity of $\Delta(X)$.  (2)~Follow the method of
\ref{alinea;failure}, starting with a hypersurface $Y$ in $\R^d$ which
is transverse to the faces of the orthant $\R^d_{\ge0}$ as well as to
the radial vector field on $\R^d$.  The moment map $\phi$ for the
standard $\T^d$-action on $\C^d$ is quadratic, and therefore maps the
radial vector field on $\C^d$ to twice the radial vector field on
$\R^d$.  It follows that the hypersurface $X=\phi^{-1}(Y)$ is
transverse to the radial vector field on $\C^d$, which is Liouville,
and hence $X$ is of contact type.  Unless $Y$ is an affine hyperplane
(in which case $X$ is a contact ellipsoid and $\Phi(X)$ a simplex),
the image $\Phi(X)=Y\cap\R^d_{\ge0}$ is not convex.

It is instructive to compare and contrast our contact convexity
theorem, Theorem \ref{theorem;contact-convex}, with a theorem of
Lerman \cite{lerman;convexity-torus-contact}, as improved by Chiang
and Karshon \cite{chiang-karshon;convexity-contact}.  Our result
regards the \emph{symplectic leaf space} $X/\ca{F}$ of $X$.  Their
result regards the \emph{symplectization} $M=X\times(0,\infty)$ of
$X$, which carries the symplectic form
$\omega_M=-d(t\alpha)=t\omega-dt\wedge\alpha$, where $t$ is the
coordinate on the interval $(0,\infty)$.  Letting $G$ act trivially on
the second factor, we get a Hamiltonian $G$-action on $M$ with moment
map $\Phi_M(x,t)=t\Phi(x)$.  The image of $\Phi_M$ is the conical set
$\Phi_M(M)=\bigcup_{t>0}t\Phi(X)$.  Intersecting the image with the
chamber $C$ and adding the origin defines a subset
$\Delta(M)=\{0\}\cup\bigl(\Phi_M(M)\cap C\bigr)$ of $C$, which is the
union of all dilatations of $\Delta(X)$,
$$
\Delta(M)=\bigcup_{t\ge0}t\Delta(X).
$$
The Lerman-Chiang-Karshon theorem states that, if $G$ is a torus of
dimension $\ge2$, then $\Delta(M)$ is a rational convex polyhedral
cone.  They make no cleanness hypothesis on the action.  In fact, by
either of the methods discussed in the previous paragraph one can
manufacture examples where $\Delta(M)$ is convex but $\Delta(X)$ is
not.

Another difference between the two contact convexity theorems is that,
as noted above, the image $\Phi(X)$ is highly dependent on the choice
of the contact form $\alpha$.  In contrast, the symplectic cone
$(M,\omega_M)$ is an intrinsic invariant of the contact hyperplane
bundle $\ker(\alpha)$, from which it follows that its moment cone
$\Delta(M)$ depends only on the conformal class of $\alpha$.

Nevertheless, our Theorem \ref{theorem;contact-convex} is not wholly
independent from the Lerman-Chiang-Karshon theorem.  To see the
connection, note that the Reeb vector field $\rho$ satisfies
$$
\iota(\rho)\omega_M=-t\iota(\rho)d\alpha-\iota(\rho)(dt\wedge\alpha)=
0+dt\wedge\iota(\rho)\alpha=dt,
$$
in other words is the Hamiltonian vector field of the radial function
$t$ on the symplectic cone $M$.  Since $\rho$ is $G$-invariant, this
makes $M$ a Hamiltonian $\hat{G}$-manifold for the product
$\hat{G}=G\times\R$ with moment map $\hat{\Phi}\colon
M\to\g^*\times\R$ defined by $\hat{\Phi}(x,t)=(t\Phi(x),t)$.  Putting
$\hat{\Delta}(M)=\{0\}\cup\bigl(\hat{\Phi}(M)\cap(C\times\R)\bigr)\subset
C\times\R$ we find
\begin{equation}\label{equation;cone}
\hat{\Delta}(M)=\bigcup_{t\ge0}\bigl(t\Delta(X)\times\{t\}\bigr).
\end{equation}
The set $\Delta(X)$ is obtained by intersecting $\hat{\Delta}(M)$ with
the hyperplane $t=1$, which is as it should be, because the symplectic
quotient of $M$ at level $1$ with respect to the $\R$-action is the
leaf space $X/\ca{F}$.  The cone $\Delta(M)$ is obtained by
restricting the $\hat{G}$-action to $G$, i.e.\ by projecting
$\hat{\Delta}(M)$ along the $\R$-axis.  Thus we obtain the following
nonabelian extension of the Lerman-Chiang-Karshon theorem, which
appears to be new.

\begin{subtheorem}[contact convexity theorem II]
Suppose that the $G$-action on $X$ is clean.  Then $\hat{\Delta}(M)$
and $\Delta(M)$ are convex polyhedral cones.
\end{subtheorem}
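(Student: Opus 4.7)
The plan is to deduce both convexity assertions directly from Theorem \ref{theorem;contact-convex} (Contact Convexity Theorem~I) together with elementary convex geometry, applied to the explicit description of $\hat\Delta(M)$ given in \eqref{equation;cone}.

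First, since $X$ is compact the moment map $\Phi\colon X\to\g^*$ is automatically proper, so the cleanness hypothesis together with Theorem \ref{theorem;contact-convex} implies that $\Delta(X)$ is a convex polytope. Accordingly, write
$$\Delta(X)=\bigl\{\,y\in\t^*\bigm|\inner{\eta_i,y}\ge a_i,\ i=1,\dots,k\,\bigr\}$$
as a finite intersection of closed half-spaces, including among the pairs $(\eta_i,a_i)$ the (homogeneous) inequalities that cut out the chamber $C$. Because $X$ is compact, $\Delta(X)$ is bounded and therefore has trivial recession cone
$$\bigl\{\,z\in\t^*\bigm|\inner{\eta_i,z}\ge0\ \text{for all }i\,\bigr\}=\{0\}.$$

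Next, I would unfold \eqref{equation;cone}. For each $t>0$ the dilation is $t\Delta(X)=\{\,z\mid\inner{\eta_i,z}\ge ta_i\,\}$, and the recession-cone observation above shows that the contribution of $t=0$ in \eqref{equation;cone} is precisely $(0,0)$. Assembling these slices gives
$$\hat\Delta(M)=\bigl\{\,(z,t)\in\t^*\times\R\bigm|t\ge0,\ \inner{\eta_i,z}-ta_i\ge0\ \text{for all }i\,\bigr\},$$
a finite intersection of closed half-spaces each of which passes through the origin. Hence $\hat\Delta(M)$ is a convex polyhedral cone.

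Finally, comparing \eqref{equation;cone} with the formula $\Delta(M)=\bigcup_{t\ge0}t\Delta(X)$ stated just before it exhibits $\Delta(M)$ as the image of $\hat\Delta(M)$ under the linear projection $(z,t)\mapsto z$. Since the image of a convex polyhedral cone under a linear map is again a convex polyhedral cone (a standard consequence of Fourier--Motzkin elimination), $\Delta(M)$ is a convex polyhedral cone as well. The whole argument is a short corollary of Theorem \ref{theorem;contact-convex} and presents no real obstacle; the only point requiring a moment's thought is the verification that the $t=0$ slice of \eqref{equation;cone} reduces to the single point $(0,0)$, which is precisely where the compactness of $X$ enters.
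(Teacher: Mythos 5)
Your proposal is correct and follows essentially the same route as the paper: express $\Delta(X)$ by finitely many inequalities $\inner{\eta_i,\cdot}\ge a_i$, homogenize them to cut out $\hat\Delta(M)$ as a polyhedral cone in $\t^*\times\R$, and project to get $\Delta(M)$. Your explicit check that the $t=0$ slice collapses to the origin (via the trivial recession cone of the bounded set $\Delta(X)$) is a detail the paper leaves implicit, but it does not change the argument.
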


\begin{proof}
Theorem \ref{theorem;contact-convex} gives that $\Delta(X)$ is a
convex polytope.  Let $\eta_1$, $\eta_2,\dots$, $\eta_k\in\t$ be
inward-pointing normal vectors to the facets of $\Delta(X)$; then
points $v\in\Delta(X)$ are determined by inequalities of the form
$\inner{\eta_i,v}\ge a_i$ with $a_i\in\R$.  It then follows from
\eqref{equation;cone} that points $\hat{v}=(tv,t)\in \hat{\Delta}(M)$
are determined by the homogeneous inequalities
$\inner{\eta_i,tv}-a_it\ge0$,
i.e.\ $\inner{\hat{\eta}_i,\hat{v}}\ge0$, where
$\hat{\eta}_i=(\eta_i,-a_i)\in\t\times\R$.  Hence $\hat{\Delta}(M)$ is
a convex polyhedral cone.  Hence its projection $\Delta(M)$ onto
$\t^*$ is likewise a convex polyhedral cone.
\end{proof}

Unfortunately our proof does not enable us to show that the cone
$\Delta(M)$ is rational, nor that it is convex if the action is not
clean.
\end{alinea}

%%%%%%%%%%%%%%%%%%%%%%%%%%%%%%%%%%%%%%%%%%%%%%%%%%%%%%%%%%%%%%%%%%%%%%%%
\appendix
%%%%%%%%%%%%%%%%%%%%%%%%%%%%%%%%%%%%%%%%%%%%%%%%%%%%%%%%%%%%%%%%%%%%%%%%

%%%%%%%%%%%%%%%%%%%%%%%%%%%%%%%%%%%%%%%%%%%%%%%%%%%%%%%%%%%%%%%%%%%%%%%%
\section{Immersed normal subgroups}\label{section;immersed}
%%%%%%%%%%%%%%%%%%%%%%%%%%%%%%%%%%%%%%%%%%%%%%%%%%%%%%%%%%%%%%%%%%%%%%%%

Let $G$ be a connected compact Lie group.  What immersed connected
normal Lie subgroups $N$ does $G$ have?  ``Not very many'' is the
answer.  There are two basic types of such immersions: (1)~$G$ is
semisimple and simply connected and $N$ is a product of simple factors
of $G$; (2)~$G$ is a torus and $N$ is a product of a torus and a
vector space immersing into $G$.  Type (1) is a closed embedding, but
type (2) may not be.  We have the following straightforward result.

\begin{lemma}
Every immersed connected normal Lie subgroup $N$ of $G$ is, up to
finite covering groups, a product of types \emph{(1)} and \emph{(2)}.
\end{lemma}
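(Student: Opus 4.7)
The plan is to invoke the structure theorem for connected compact Lie groups to reduce, up to a finite cover, to the case $\tilde{G} = \tilde{K} \times T$ with $\tilde{K}$ simply connected compact semisimple and $T$ a torus, and then to decompose $\n = \Lie(N)$ along the splitting $\g = \lie{k} \oplus \t$. Concretely, the universal cover of $G$ is $\tilde{K} \times \lie{z}$, where $\lie{z}$ is the Lie algebra of the centre and $\tilde{K}$ is the simply connected cover of the derived subgroup; by Weyl's theorem $\tilde{K}$ is compact. Since $G$ is compact, the kernel of this cover meets $\lie{z}$ in a lattice $\Lambda$, so dividing out yields a finite cover $\pi\colon \tilde{G} = \tilde{K} \times T \to G$ with $T = \lie{z}/\Lambda$. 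Let $\tilde{N}$ be the connected immersed subgroup of $\tilde{G}$ integrating $\n$. Then $\pi(\tilde{N}) = N$, and the kernel of $\pi|_{\tilde{N}}$ is contained in the finite kernel of $\pi$, so $\tilde{N} \to N$ is a finite covering and it suffices to analyse $\tilde{N}$.

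Next I would show that $\n$ splits as $\n = \n_K \oplus \n_T$, where $\n_K = \n \cap \lie{k}$ and $\n_T = \n \cap \t$. Letting $\pi_{\lie{k}}\colon \g \to \lie{k}$ be the projection, $\pi_{\lie{k}}(\n)$ is an ideal of the semisimple algebra $\lie{k}$, hence perfect: $\pi_{\lie{k}}(\n) = [\pi_{\lie{k}}(\n), \pi_{\lie{k}}(\n)]$. Since $[\lie{k},\t] = [\t,\t] = 0$, for any $\xi_1, \xi_2 \in \n$ we have $[\pi_{\lie{k}}(\xi_1), \pi_{\lie{k}}(\xi_2)] = [\xi_1,\xi_2] \in \n$, which gives $\pi_{\lie{k}}(\n) \subset \n \cap \lie{k}$ and forces the direct sum decomposition. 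The ideal $\n_K$ is then a direct sum of simple ideals of $\lie{k}$, and because $\tilde{K}$ is simply connected it integrates to a closed embedded product of the corresponding simple factors of $\tilde{K}$, which is exactly a subgroup of type~(1). For the torus part, set $\Lambda_T = \lie{X}_*(T) \cap \n_T$ and choose a linear complement $W$ of $\R\Lambda_T$ in $\n_T$; the integrating subgroup of $\n_T$ then splits as the closed subtorus $\R\Lambda_T / \Lambda_T$ times the injectively immersed vector subgroup $\exp(W)$, where injectivity comes from $W \cap \lie{X}_*(T) = 0$. This is a subgroup of type~(2). Since $\tilde{K}$ and $T$ commute and meet trivially in $\tilde{G}$, $\tilde{N}$ is the internal direct product of these two pieces, completing the argument.

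The main obstacle lies in the torus step: one must canonically extract from a possibly irrational subspace $\n_T \subset \t$ a closed rational summand and check that the transverse complement embeds cleanly as a vector subgroup. This is precisely the point where the statement is forced to be only up to finite covers: $N$ itself, living inside the non-simply-connected $G$, need not admit an internal direct product decomposition, even though its finite cover $\tilde{N}$ does. The splitting step in the Lie algebra, by contrast, is a routine but essential use of the perfectness of ideals in compact semisimple Lie algebras, which is what makes a normal subalgebra respect the semisimple-abelian decomposition of $\g$.
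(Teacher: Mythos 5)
Your proof is correct and follows essentially the same route as the paper: pass to the finite cover $G_1\times G_2$ coming from the splitting $\g=[\g,\g]\oplus\lie{z}(\g)$, intersect $\n$ with the two summands, and integrate each piece to a type~(1) and a type~(2) subgroup respectively. The extra details you supply --- the perfectness argument showing $\n=(\n\cap\lie{k})\oplus(\n\cap\t)$, and the lattice-plus-complement splitting of the abelian part --- are points the paper asserts without proof, and your justifications of them are sound.
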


\begin{proof}
The Lie algebra $\g=\Lie(G)$ is the direct sum of the derived
subalgebra $\g_1=[\g,\g]$ and the centre $\g_2=\lie{z}(\g)$.  The
ideal $\n=\Lie(N)$ is the direct sum of the ideals $\n_1=\n\cap\g_1$
and $\n_2=\n\cap\g_2$.  The ideal $\n_1$ is a direct sum of simple
ideals of the semisimple Lie algebra $\g_1$.  (See
e.g.\ \cite[\S~I.6]{bourbaki;groupes-algebres}.)  Letting $G_1$ and
$N_1$ be the corresponding simply connected groups, we have an
embedding $N_1\to G_1$ of type (1).  Letting $G_2$ be the identity
component of the centre $Z(G)$ and $N_2=\exp(\n_2)$, we have an
immersion $N_2\to G_2$ of type (2).  The product $G_1\times G_2$ is a
finite covering group of $G$ and $N_1\times N_2$ is a finite covering
group of the identity component of $N$.
\end{proof}

The following consequence is used in the proof of Theorem
\ref{theorem;convex}\eqref{item;rational}.  As usual, by a rational
subspace of the Lie algebra of a torus we mean a subspace that is
rational with respect to the $\Q$-structure defined by the character
lattice of the torus.

\begin{corollary}\label{corollary;closed-rational}
Let $N$ be an immersed normal Lie subgroup of $G$ and $\t$ a Cartan
subalgebra of $\g$.  The following statements are equivalent:
\begin{enumerate}
\item $N$ is closed;
\item $\n\cap\lie{z}(\g)$ is a rational subspace of $\lie{z}(\g)$;
\item $\n\cap\t$ is a rational subspace of $\t$.
\end{enumerate}
\end{corollary}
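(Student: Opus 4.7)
The approach is to leverage the preceding lemma to split the problem into its semisimple and central parts, then apply the classical criterion for closedness of connected subgroups of a torus.

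First I would write $\n=\n_1\oplus\n_2$ with $\n_1=\n\cap[\g,\g]$ and $\n_2=\n\cap\lie{z}(\g)$, exactly as in the proof of the preceding lemma. That lemma tells us that, up to finite covers, $N$ factors as $N_1\times N_2$, where $N_1$ is the image of a closed embedding of a simply connected compact semisimple group into the semisimple factor and $N_2=\exp(\n_2)$ is the connected immersed subgroup of the central torus $Z(G)^\circ$ with Lie algebra $\n_2$. Since $N_1$ is automatically closed, $N$ is closed in $G$ if and only if $N_2$ is closed in $Z(G)^\circ$. I would then invoke the standard fact that a connected Lie subgroup of a torus $T'$ with Lie algebra $\t'$ and exponential lattice $\Lambda=\ker(\exp\colon\t'\to T')$ is closed if and only if its Lie algebra is rational with respect to $\Q\otimes_\Z\Lambda$. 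Applying this to $N_2\subset Z(G)^\circ$ gives the equivalence (i)~$\iff$~(ii).

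For (ii)~$\iff$~(iii), I would decompose the Cartan subalgebra as $\t=\t_1\oplus\lie{z}(\g)$, where $\t_1=\t\cap[\g,\g]$ is a Cartan subalgebra of the compact semisimple derived algebra $[\g,\g]$. Under this decomposition $\n\cap\t=(\n_1\cap\t_1)\oplus\n_2$. The key observation is that $\n_1$, being a sum of simple ideals of the compact semisimple Lie algebra $[\g,\g]$, has the property that $\n_1\cap\t_1$ is a sum of Cartan subalgebras of compact simple ideals and is therefore automatically rational (its intersection with the coroot lattice already spans it). Consequently $\n\cap\t$ is rational in $\t$ if and only if $\n_2$ is rational in $\lie{z}(\g)$, which is precisely condition~(ii).

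The main technical point to handle carefully is the compatibility of the $\Q$-structures under the decomposition $\t=\t_1\oplus\lie{z}(\g)$: because $G$ need not literally be the product of its central torus and its semisimple cover, the exponential lattice $\lie{X}_*(T)$ need not split as a direct sum of the corresponding lattices on the two factors. However, the finite covering $G_1\times G_2\to G$ furnished by the lemma identifies these lattices up to commensurability, so they induce the same $\Q$-structure on $\t$, and rationality of a subspace is detected summand-by-summand. This is the one place where some care is required, but it is routine once the covering picture is in place.
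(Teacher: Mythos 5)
Your proof is correct and is essentially the argument the paper intends: the corollary is stated as a direct consequence of the preceding structural lemma, and your derivation (splitting $\n$ into its semisimple and central parts, noting the semisimple factor is automatically closed and its intersection with $\t$ is spanned by coroots hence rational, and reducing everything to the classical closedness criterion for connected subgroups of the central torus) is exactly the intended reasoning, with the lattice-commensurability point handled appropriately.
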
  

%%%%%%%%%%%%%%%%%%%%%%%%%%%%%%%%%%%%%%%%%%%%%%%%%%%%%%%%%%%%%%%%%%%%%%%%
\section{Some presymplectic linear algebra}\label{section;presymplectic}
%%%%%%%%%%%%%%%%%%%%%%%%%%%%%%%%%%%%%%%%%%%%%%%%%%%%%%%%%%%%%%%%%%%%%%%%

This appendix lists a few elementary facts referred to in the proof of
the convexity theorem.  Let $G$ be a fixed connected compact Lie
group.  Let $E$ be a finite-dimensional real vector space and $F$ a
linear subspace.  We denote by $F^\circ\subset E^*$ the annihilator of
$F$.  Let $\sigma$ be a presymplectic form on $E$.  We denote by
$$
F^\sigma=\{\,u\in E\mid\text{$\sigma(u,v)=0$ for all $v\in F$}\,\}
$$
the subspace of $E$ orthogonal to $F$ with respect to $\sigma$.  We
call $E$ a \emph{presymplectic $G$-module} if $G$ acts smoothly and
linearly on $E$ and the $G$-action preserves $\sigma$.  We state the
following simple result without proof.

\begin{lemma}\label{lemma;symplectic-module}
A presymplectic $G$-module $(E,\sigma)$ is a presymplectic Hamiltonian
$G$-manifold with moment map $\Phi_E$ given by
$\Phi_E^\xi(e)=\frac12\sigma(\xi(e),e)$, where $e\mapsto\xi(e)$
denotes the action of $\xi\in\g$ on $E$.
\end{lemma}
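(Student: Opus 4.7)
The plan is to verify the four conditions stipulated in \ref{alinea;presymplectic}: that $(E,\sigma)$ is a presymplectic manifold, that $G$ acts smoothly preserving $\sigma$, that $\Phi_E$ is smooth and equivariant, and that $d\Phi_E^\xi=\iota(\xi_E)\sigma$ for all $\xi\in\g$. The first two are essentially given: since $\sigma$ is a constant $2$-form on the vector space $E$, it is automatically closed and of constant rank, and the smoothness and invariance of the action are part of the definition of a presymplectic $G$-module. Smoothness of $\Phi_E$ is obvious because $\Phi_E^\xi$ is a quadratic polynomial in $e\in E$.

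The substantive point is the moment map identity. Under the canonical identification $T_eE\cong E$ the fundamental vector field is $\xi_E(e)=\xi(e)$, so for $v\in T_eE$ we compute
\begin{equation*}
d\Phi_E^\xi(v)=\tfrac12\bigl(\sigma(\xi(v),e)+\sigma(\xi(e),v)\bigr).
\end{equation*}
We want this to equal $\iota(\xi_E)\sigma(v)=\sigma(\xi(e),v)$, which is equivalent to the identity $\sigma(\xi(v),e)=\sigma(\xi(e),v)$. This last is precisely the infinitesimal form of the $G$-invariance of $\sigma$: differentiating the relation $\sigma(g u,g w)=\sigma(u,w)$ along $g=\exp(t\xi)$ at $t=0$ yields $\sigma(\xi(u),w)+\sigma(u,\xi(w))=0$ for all $u,w\in E$, and setting $u=v$, $w=e$ and using antisymmetry of $\sigma$ gives the required equality.

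For equivariance, I would use the standard identity $\xi(ge)=g\cdot(\Ad_{g^{-1}}\xi)(e)$ for a linear action, combined with the $G$-invariance of $\sigma$, to obtain
\begin{equation*}
\Phi_E^\xi(ge)=\tfrac12\sigma\bigl(g\cdot(\Ad_{g^{-1}}\xi)(e),\,ge\bigr)=\tfrac12\sigma\bigl((\Ad_{g^{-1}}\xi)(e),e\bigr)=\Phi_E^{\Ad_{g^{-1}}\xi}(e),
\end{equation*}
which says precisely that $\Phi_E(ge)=\Ad^*(g)\Phi_E(e)$.

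The proof involves no real obstacle; the only step requiring a moment's thought is recognising that the factor of $\tfrac12$ in the definition of $\Phi_E$ is calibrated exactly so that the two terms arising from the product rule in $d\Phi_E^\xi$ collapse to $\sigma(\xi(e),v)$ once the infinitesimal invariance identity is applied.
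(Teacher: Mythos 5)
Your proposal is correct. The paper states this lemma explicitly without proof, and your argument is the standard verification the authors evidently had in mind: the key point, as you identify, is that infinitesimal $G$-invariance of $\sigma$ makes the two product-rule terms in $d\Phi_E^\xi$ equal, which is exactly what the factor $\tfrac12$ is calibrated for, and the equivariance computation via $\xi(ge)=g\cdot(\Ad_{g^{-1}}\xi)(e)$ is likewise the expected one.
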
  

We require the following simple lemma concerning linear $G$-actions.

\begin{lemma}\label{lemma;module-affine}
Let $E$ be finite-dimensional real $G$-module.  Let $F$ be a
$G$-submodule and $e\in E$.  Then the following conditions are
equivalent.
\begin{enumerate}
\item\label{item;decompose-fixed}
$e=e_0+e_1$ for some $e_0\in E^G$ and some $e_1\in F$;
\item\label{item;orbit-affine}
$G\cdot e\subset e+F$;
\item\label{item;affine-invariant}
the affine subspace $e+F$ is preserved by the $G$-action;
\item\label{item;tangent-affine}
$T_e(G\cdot e)\subset F$.
\end{enumerate}
\end{lemma}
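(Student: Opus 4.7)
The plan is to close the cycle \eqref{item;decompose-fixed}$\To$\eqref{item;orbit-affine}$\To$\eqref{item;affine-invariant}$\To$\eqref{item;tangent-affine}$\To$\eqref{item;decompose-fixed}. The first three implications are purely formal consequences of $F$ being a $G$-submodule. For \eqref{item;decompose-fixed}$\To$\eqref{item;orbit-affine}, writing $e=e_0+e_1$ with $e_0\in E^G$ and $e_1\in F$ gives $g\cdot e=e_0+g\cdot e_1\in e_0+F=e+F$ because $F$ is $G$-stable. For \eqref{item;orbit-affine}$\To$\eqref{item;affine-invariant}, the hypothesis gives $g(e+F)=g\cdot e+F\subset e+F$, and the reverse inclusion follows by applying the same bound to $g^{-1}$. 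For \eqref{item;affine-invariant}$\To$\eqref{item;tangent-affine}, the orbit $G\cdot e$ is then contained in the affine subspace $e+F$, whose tangent space at every point is the linear subspace $F$, so $T_e(G\cdot e)\subset F$.

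The substantive direction is \eqref{item;tangent-affine}$\To$\eqref{item;decompose-fixed}, and this is where both the compactness and the connectedness of $G$ enter. Averaging an inner product over $G$ (using compactness) produces a $G$-invariant complement $F'$ to $F$, yielding a direct sum decomposition $E=F\oplus F'$ of $G$-modules. Write $e=e_1+e'$ accordingly, with $e_1\in F$ and $e'\in F'$. The assumption $T_e(G\cdot e)\subset F$ means $\xi\cdot e\in F$ for every $\xi\in\g$; since both summands are $G$-stable, decomposing $\xi\cdot e=\xi\cdot e_1+\xi\cdot e'$ forces $\xi\cdot e'\in F\cap F'=0$. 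Thus $\g$ annihilates $e'$, and connectedness of $G$ upgrades this to $g\cdot e'=e'$ for every $g\in G$, i.e.\ $e'\in E^G$. Setting $e_0=e'$ exhibits the decomposition in \eqref{item;decompose-fixed}.

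The argument is entirely routine and I do not anticipate any real obstacle. The only points worth flagging are that both hypotheses on $G$ are used essentially: compactness to split $E$ as a $G$-module, and connectedness to pass from infinitesimal fixedness to global fixedness. A trivial counterexample without connectedness is $G=\Z/2$ acting on $E=\R$ by $\pm1$ with $F=0$ and $e=1$: the orbit is discrete so \eqref{item;tangent-affine} holds vacuously, while $E^G=0$ so \eqref{item;decompose-fixed} fails.
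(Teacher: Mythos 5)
Your proof is correct. The formal implications (i)$\To$(ii)$\To$(iii)$\To$(iv) are handled the same way the paper treats its "straightforward" implications, but your closing of the cycle differs from the paper's. The paper splits the substantive work into two steps: it proves (iv)$\To$(ii) by observing that the orbit $G\cdot e$ is everywhere tangent to the foliation of $E$ by affine translates of $F$ and invoking connectedness to conclude the orbit stays in the leaf $e+F$; and it proves (ii)$\To$(i) by averaging the point itself, setting $e_0=\int_G g\cdot e\,dg$ and checking $e-e_0\in F$ by pairing against $F^\circ$. You instead prove (iv)$\To$(i) in a single step: compactness gives a $G$-invariant complement $F'$ with $E=F\oplus F'$, the hypothesis forces $\g$ to annihilate the $F'$-component $e'$ of $e$, and connectedness upgrades this to $e'\in E^G$. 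Both arguments are sound and use the two hypotheses on $G$ for the same essential purposes (compactness to average, connectedness to integrate an infinitesimal statement); yours requires choosing an invariant complement, which the paper's averaging of the point avoids, while the paper's route passes through the intermediate condition (ii) and a slightly more geometric tangency argument. Your closing counterexample showing that connectedness is needed for (iv)$\To$(i) is a nice additional check, consistent with the standing assumption in the paper's appendix that $G$ is connected and compact.
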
  

\begin{proof}
First we prove \eqref{item;orbit-affine} $\implies$
\eqref{item;decompose-fixed} (which is the only implication that
requires the compactness of $G$).  Let $dg$ be the normalized Haar
measure on $G$ and put $e_0=\int_Gg\cdot e\,dg$.  Then $e_0$ is fixed
under the action.  Since $g\cdot e\in e+F$ for all $g\in G$, we have
$\phi(g\cdot e)=\phi(e)$ for all $\phi\in F^\circ$.  Therefore
$$
\phi(e_0)=\int_G\phi(g\cdot e)\,dg= \int_G\phi(e)\,dg=\phi(e)
$$
for all $\phi\in F^\circ$.  This shows that $e-e_0\in F$, which proves
\eqref{item;decompose-fixed}.  Next we prove
\eqref{item;tangent-affine} $\implies$ \eqref{item;orbit-affine}
(which is the only implication that requires the connectedness of
$G$).  If $T_e(G\cdot e)\subset F$, then $T_f(G\cdot e)\subset F$ for
all $f\in G\cdot e$, because the action preserves $F$.  Hence the
orbit $G\cdot e$ is everywhere tangent to the foliation of $E$ given
by the affine subspaces parallel to $F$.  Therefore $G\cdot e$ is
contained in the leaf $e+F$.  The other implications are
straightforward.
\end{proof}

\begin{lemma}\label{lemma;coadjoint}
Let $\a$ be an ideal of $\g$ and $\lambda\in\g^*$.  The coadjoint
orbit $G\cdot\lambda$ is contained in the affine subspace
$\lambda+\a^\circ$ if and only if $\a$ is contained in the centralizer
$\g_\lambda$ of $\lambda$.
\end{lemma}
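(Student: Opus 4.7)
The plan is to reduce the claim to Lemma \ref{lemma;module-affine}, applied to the coadjoint $G$-module $E=\g^*$, the subspace $F=\a^\circ$, and the vector $e=\lambda$. The first thing I would verify is that $\a^\circ$ is indeed a $G$-submodule of $\g^*$: since $\a$ is an ideal and $G$ is connected, the adjoint action of $G$ preserves $\a$, hence the dual coadjoint action preserves $\a^\circ$. This is the single place the ideal hypothesis enters.

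With this in hand, the equivalence \eqref{item;orbit-affine}~$\iff$~\eqref{item;tangent-affine} of Lemma \ref{lemma;module-affine} gives
$$G\cdot\lambda\subset\lambda+\a^\circ\quad\iff\quad T_\lambda(G\cdot\lambda)\subset\a^\circ.$$
The tangent space to the coadjoint orbit at $\lambda$ is $T_\lambda(G\cdot\lambda)=\{\,\ad^*_\xi\lambda\mid\xi\in\g\,\}$, so the right-hand condition becomes
$$\inner{\ad^*_\xi\lambda,\zeta}=0\qquad\text{for all }\xi\in\g,\ \zeta\in\a.$$

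Now I would unfold the definition of the coadjoint action, $\inner{\ad^*_\xi\lambda,\zeta}=-\inner{\lambda,[\xi,\zeta]}$. The above condition becomes $\inner{\lambda,[\xi,\zeta]}=0$ for all $\xi\in\g$ and $\zeta\in\a$, which by antisymmetry of the bracket is the same as $\ad^*_\zeta\lambda=0$ for every $\zeta\in\a$, i.e.\ $\a\subset\g_\lambda$. This completes both directions simultaneously. There is no real obstacle: once Lemma \ref{lemma;module-affine} is invoked correctly, the proof is pure bookkeeping between the definitions of $\a^\circ$, $\ad^*$, and $\g_\lambda$.
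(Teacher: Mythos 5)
Your proof is correct and follows the same route as the paper: both apply Lemma \ref{lemma;module-affine} to the coadjoint module $E=\g^*$ with submodule $F=\a^\circ$ and then translate the tangent condition $T_\lambda(G\cdot\lambda)\subset\a^\circ$ into $\a\subset\g_\lambda$. The paper simply quotes the identity $T_\lambda(G\cdot\lambda)=\g_\lambda^\circ$ where you unwind the $\ad^*$ computation by hand, and your explicit check that $\a^\circ$ is a $G$-submodule (using that $\a$ is an ideal and $G$ is connected) is a detail the paper leaves implicit.
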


\begin{proof}
Apply Lemma \ref{lemma;module-affine} to the module $E=\g^*$ and the
submodule $F=\a^\circ$ and use the fact that
$T_\lambda(G\cdot\lambda)=\g_\lambda^\circ$.
\end{proof}

In the next statements $X$ denotes a presymplectic Hamiltonian
$G$-manifold with moment map $\Phi$ and null foliation $\ca{F}$.  The
result holds regardless of any cleanness assumptions on the $G$-action
on $X$.  We denote by $\bar{x}$ the leaf of a point $x\in X$,
considered as a point in the leaf space $X/\ca{F}$, and by
$\g_{\bar{x}}$ its stabilizer subalgebra, which consists of all
$\xi\in\g$ satisfying $\xi_X(x)\in T_x\ca{F}$.

\begin{lemma}\label{lemma;image-kernel}
For all $x\in X$ we have
\begin{enumerate}
\item\label{item;kernel}
$\ker(T_x\Phi)=T_x(G\cdot x)^{\omega_x}$;
\item\label{item;image}
$\im(T_x\Phi)=\g_{\bar{x}}^\circ$ and
  $\coker(T_x\Phi)=\g_{\bar{x}}^*$.
\end{enumerate}
\end{lemma}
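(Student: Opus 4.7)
The plan is to derive both statements from the single identity
\[
\langle T_x\Phi(v),\xi\rangle \;=\; d\Phi^\xi(v) \;=\; \iota(\xi_X)\omega_x(v) \;=\; \omega_x(\xi_X(x),v),
\]
valid for all $v\in T_xX$ and $\xi\in\g$, which is just the definition of a presymplectic moment map evaluated at $x$. With this bilinear pairing between $T_xX$ and $\g$ in hand, both parts of the lemma become exercises in taking the right-hand or left-hand kernel.

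For part \eqref{item;kernel}, I would note that $v\in\ker(T_x\Phi)$ iff $\langle T_x\Phi(v),\xi\rangle=0$ for every $\xi\in\g$, which by the identity above says $\omega_x(\xi_X(x),v)=0$ for every $\xi$. Since the vectors $\xi_X(x)$ as $\xi$ ranges over $\g$ span exactly $T_x(G\cdot x)$, this is the defining condition for $v$ to lie in the $\omega_x$-orthogonal complement $T_x(G\cdot x)^{\omega_x}$.

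For part \eqref{item;image}, I would compute the annihilator of $\im(T_x\Phi)\subset\g^*$ inside $\g$. By the identity, $\xi\in\g$ annihilates $\im(T_x\Phi)$ iff $\omega_x(\xi_X(x),v)=0$ for all $v\in T_xX$, i.e.\ iff $\xi_X(x)\in\ker(\omega_x)=T_x\ca{F}$, which by the definition of $\g_{\bar{x}}$ recalled just before the lemma is precisely the condition $\xi\in\g_{\bar{x}}$. Thus $\im(T_x\Phi)^\circ=\g_{\bar{x}}$, and taking annihilators again in the finite-dimensional space $\g^*$ gives $\im(T_x\Phi)=\g_{\bar{x}}^\circ$. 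The cokernel identification $\coker(T_x\Phi)=\g^*/\g_{\bar{x}}^\circ\cong\g_{\bar{x}}^*$ is then the standard duality between annihilators and dual quotients.

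I do not anticipate a genuine obstacle: the proof is a one-line reformulation followed by two applications of annihilator duality, and it does not require any cleanness hypothesis — exactly as the paper advertises. The only thing to be mildly careful about is keeping straight which annihilator lives in which space (the annihilator of a subspace of $T_xX$ sits in $T_x^*X$, while the annihilator of a subspace of $\g^*$ sits in $\g$); this is automatic once the bilinear pairing above is written explicitly.
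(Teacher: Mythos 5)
Your proposal is correct and follows essentially the same route as the paper: both parts are read off from the pairing $\langle T_x\Phi(v),\xi\rangle=\omega_x(\xi_X(x),v)$, with part \eqref{item;kernel} obtained by fixing $v$ and part \eqref{item;image} by computing $\im(T_x\Phi)^\circ=\g_{\bar{x}}$ and dualizing. The only cosmetic difference is that the paper identifies the condition $\xi_X(x)\in T_x\ca{F}$ with $\xi_X(x)\in T_x(G_{\bar{x}}\cdot x)$ via \eqref{equation;stabilizer}, whereas you invoke the definition of $\g_{\bar{x}}$ directly; these are the same observation.
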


\begin{proof}
\eqref{item;kernel}~The moment map condition
$d\Phi^\xi=\iota(\xi_X)\omega$ implies that $v\in\ker(T_x\Phi)$ if and
only if $\omega_x(\xi_X(x),v)=0$, i.e.\ $v\in T_x(G\cdot
x)^{\omega_x}$.

\eqref{item;image}~Similarly, a vector $\xi\in\g$ is in
$\im(T_x\Phi)^\circ$ if and only if $\omega_x(\xi_X(x),v)=0$ for all
$v\in T_xX$, i.e.\ $\xi_X(x)\in\ker(\omega_x)=T_x\ca{F}$.  This is
equivalent to $\xi_X(x)\in T_x(G\cdot x)\cap
T_x\ca{F}=T_x(G_{\bar{x}}\cdot x)$, where we used
\eqref{equation;stabilizer}.  Thus $\im(T_x\Phi)^\circ=\g_{\bar{x}}$.
In other words, $\im(T_x\Phi)=\g_{\bar{x}}^\circ$ and
$\coker(T_x\Phi)=\g^*/\g_{\bar{x}}^\circ=\g_{\bar{x}}^*$.
\end{proof}

Recall that any smooth map of manifolds $f\colon A\to B$ has an
intrinsically defined Hessian or second derivative
$T^2_af\colon\ker(T_af)\to\coker(T_af)$ at every $a\in A$.

\begin{corollary}\label{corollary;hessian}
For every $x\in X$ the second derivative of the moment map at $x$ is a
linear map $T^2_x\Phi\colon T_x(G\cdot
x)^{\omega_x}\to\g_{\bar{x}}^*$.  In particular, if the leaf $\bar{x}$
is $G$-fixed, the second derivative is a linear map $T^2_x\Phi\colon
T_xX\to\g^*$.
\end{corollary}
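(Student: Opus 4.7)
The plan is to deduce this as a direct consequence of Lemma \ref{lemma;image-kernel} together with the general fact that any smooth map of manifolds has a well-defined intrinsic Hessian on the kernel of its first derivative with values in the cokernel of the first derivative.

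First I would recall the construction of the intrinsic second derivative: for a smooth map $f\colon A\to B$ and a point $a\in A$, the Hessian $T^2_a f\colon \ker(T_a f)\to\coker(T_a f)$ may be defined by choosing any local chart on $B$ near $f(a)$ and any extension of an element of $\ker(T_a f)$ to a vector field on $A$, taking the second derivative in the chart, and observing that the result is independent of the chart and the extension modulo $\im(T_a f)$. This is a routine construction that requires no hypotheses on $f$ beyond smoothness. Applied to the moment map $\Phi\colon X\to\g^*$ at a point $x\in X$, it produces a linear map $T^2_x\Phi\colon\ker(T_x\Phi)\to\coker(T_x\Phi)$.

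Next I would substitute the identifications supplied by Lemma \ref{lemma;image-kernel}: namely $\ker(T_x\Phi)=T_x(G\cdot x)^{\omega_x}$ and $\coker(T_x\Phi)=\g_{\bar{x}}^*$. This immediately gives the first assertion.

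For the second assertion, I would assume $\bar{x}$ is $G$-fixed, i.e.\ $G\cdot\bar{x}=\bar{x}$. By Lemma \ref{lemma;fixed}\eqref{item;fixed-leaf} applied to $\h=\g$ this is equivalent to $\g_{\bar{x}}=\g$, which at once identifies $\g_{\bar{x}}^*=\g^*$. Moreover, fixedness of the leaf means $T_x(G\cdot x)\subset T_x\ca{F}=\ker(\omega_x)$, so every vector in $T_x(G\cdot x)$ is $\omega_x$-orthogonal to every vector of $T_xX$; therefore $T_x(G\cdot x)^{\omega_x}=T_xX$. Substituting these two identifications into the general formula yields $T^2_x\Phi\colon T_xX\to\g^*$, as claimed. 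There is no real obstacle here; the entire content of the corollary is just a repackaging of Lemma \ref{lemma;image-kernel} in the language of intrinsic Hessians, with the fixed-leaf specialization being a one-line consequence of Lemma \ref{lemma;fixed}.
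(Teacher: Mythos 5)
Your proposal is correct and is exactly the argument the paper intends: the corollary is an immediate combination of the intrinsic Hessian $T^2_xf\colon\ker(T_xf)\to\coker(T_xf)$ with the identifications $\ker(T_x\Phi)=T_x(G\cdot x)^{\omega_x}$ and $\coker(T_x\Phi)=\g_{\bar{x}}^*$ from Lemma \ref{lemma;image-kernel}. Your treatment of the fixed-leaf case, using $\g_{\bar{x}}=\g$ and $T_x(G\cdot x)\subset\ker(\omega_x)$ to get $T_x(G\cdot x)^{\omega_x}=T_xX$, is likewise the intended one-line specialization.
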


%%%%%%%%%%%%%%%%%%%%%%%%%%%%%%%%%%%%%%%%%%%%%%%%%%%%%%%%%%%%%%%%%%%%%%%%
\section{The local normal form}\label{section;normal}
%%%%%%%%%%%%%%%%%%%%%%%%%%%%%%%%%%%%%%%%%%%%%%%%%%%%%%%%%%%%%%%%%%%%%%%%

\begin{alinea}
This appendix, the results of which are not used in the main body of
the paper, but which develops a theme touched upon in Section
\ref{alinea;failure}, contains a local normal form theorem for clean
presymplectic Hamiltonian Lie group actions, which is a refinement of
the slice theorem, Theorem \ref{theorem;transversal}.  It is a type of
equivariant Darboux-Weinstein theorem, which extends results
established in the symplectic case by Guillemin and Sternberg
\cite[\S\,41]{guillemin-sternberg;techniques;;1990} and Marle
\cite{marle;modele-action}.  It says that up to isomorphism an
invariant neighbourhood of a point $x$ in a presymplectic Hamiltonian
$G$-manifold is entirely determined by infinitesimal data, namely the
stabilizer subgroup $G_x$, the image of $x$ under the moment map, and
two finite-dimensional representations of $G_x$, which describe the
relevant information ``orthogonal'' to the orbit of $x$.  These
$G_x$-modules, which are subquotients of the tangent space at $x$ and
which we call the \emph{symplectic slice} and the \emph{null slice},
capture respectively the symplectic directions and the null directions
complementary to the orbit.  As shown in Section \ref{alinea;failure},
the equivariant Darboux-Weinstein theorem is false without a cleanness
assumption on the point~$x$.  For a fixed point $x$ the theorem
reduces to Corollary \ref{corollary;linear}.

As in Section \ref{section;convex} we denote by $X$ a manifold with
presymplectic form $\omega$ and by $G$ a connected compact Lie group
acting on $M$ in a Hamiltonian fashion with moment map $\Phi$.  (See
\ref{alinea;presymplectic}.)  We denote the null foliation of $\omega$
by $\ca{F}=\ca{F}_X$, the null ideal sheaf by
$\tilde{\n}=\tilde{\n}_X$ and the null subgroup sheaf by
$\tilde{N}=\tilde{N}_X$.  (See \ref{alinea;null}.)  In
\ref{alinea;reduction}--\ref{alinea;slice} we introduce the slice
modules; in \ref{alinea;model} we describe the local model and in
\ref{alinea;darboux} we prove the equivariant Darboux-Weinstein
theorem.
\end{alinea}

\begin{alinea}\label{alinea;reduction}
Let $E$ be a finite-dimensional real vector space and $\sigma$ a
presymplectic form on $E$.  We denote by $E^\natural$ the largest
symplectic quotient space of $E$, i.e.\ $E^\natural=E/\ker(\sigma)$,
and by $\sigma^\natural$ the symplectic form on $E^\natural$ induced
by $\sigma$.  If $F$ is a linear subspace of $E$ equipped with the
presymplectic form $\sigma_F=\sigma|_F$, then $\ker(\sigma_F)=F\cap
F^\sigma$, where $F^\sigma$ denotes the subspace of $E$ orthogonal to
$F$ with respect to $\sigma$.  Therefore the largest symplectic
quotient space of $F$ is
$$F^\natural=F/(F\cap F^\sigma)\cong(F+F^\sigma)/F^\sigma.$$
We have $(F^\sigma)^\sigma=F+\ker(\sigma)$ and so
$$
\ker(\sigma_{F^\sigma})=F^\sigma\cap(F+\ker(\sigma))=F^\sigma\cap
F+\ker(\sigma).
$$
Hence the largest symplectic quotient space of $F^\sigma$ is
\begin{equation}\label{equation;reduction}
(F^\sigma)^\natural=F^\sigma/(F^\sigma\cap
F+\ker(\sigma))\cong(F+F^\sigma)/(F+\ker(\sigma)).
\end{equation}
Let $H$ be a Lie group and suppose that $E$ is a presymplectic
$H$-module.  Suppose also that $F$ is an $H$-submodule of $E$.  Then
the vector spaces $E^\natural$, $F^\natural$ and $(F^\sigma)^\natural$
are symplectic $H$-modules in a natural way.  We omit the proof of the
following elementary assertion.

\begin{sublemma}\label{lemma;natural}
Let $(E_1,\sigma_1)$ and $(E_2,\sigma_2)$ be presymplectic $H$-modules.
\begin{enumerate}
\item\label{item;surjective}
Let $f\colon E_1\to E_2$ be an $H$-equivariant surjective linear map
satisfying $f^*\sigma_2=\sigma_1$.  Then $f$ descends to an
isomorphism of symplectic $H$-modules
$$
f^\natural\colon E_1^\natural\overset\cong\longto E_2^\natural.
$$
Let $F_1$ be an $H$-submodule of $E_1$ and put $F_2=f(F_1)$.  Then
$F_2^{\sigma_2}=f(F_1^{\sigma_1})$.  Hence $f$ descends to
isomorphisms of symplectic $H$-modules
$$
f^\natural\colon F_1^\natural\overset\cong\longto F_2^\natural,\qquad
f^\natural\colon\bigl(F_1^{\sigma_1}\bigr)^\natural\overset\cong\longto
\bigl(F_2^{\sigma_2}\bigr)^\natural.
$$
\item\label{item;sum}
Let $F_1$ be an $H$-submodule of $E_1$ and $F_2$ an $H$-submodule of
$E_2$.  Let $E=E_1\oplus E_2$ and $F=F_1\oplus F_2$.  Then
$E^\natural\cong E_1^\natural\oplus E_2^\natural$, $F^\natural\cong
F_1^\natural\oplus F_2^\natural$, and $(F^\sigma)^\natural\cong
(F_1^{\sigma_1})^\natural\oplus(F_2^{\sigma_2})^\natural$.
\end{enumerate}  
\end{sublemma}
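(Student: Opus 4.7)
The plan is to deduce both parts from two elementary facts: (a)~if $f\colon E_1\to E_2$ is a surjective $H$-equivariant linear map with $f^*\sigma_2=\sigma_1$, then $\ker(\sigma_1)=f^{-1}(\ker(\sigma_2))$; and (b)~for presymplectic direct sums everything splits compatibly. Fact (a) follows immediately from the identity $\sigma_1(v,w)=\sigma_2(f(v),f(w))$: if $f(v)\in\ker(\sigma_2)$ then $\sigma_1(v,w)=0$ for every $w\in E_1$, so $v\in\ker(\sigma_1)$, and conversely if $v\in\ker(\sigma_1)$ then the surjectivity of $f$ yields $\sigma_2(f(v),u)=0$ for every $u\in E_2$.

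For part \eqref{item;surjective}, fact (a) shows that $f$ descends to an injection $E_1^\natural\to E_2^\natural$, which is surjective because $f$ is, intertwines $\sigma_1^\natural$ and $\sigma_2^\natural$ by construction, and is $H$-equivariant since $f$ is. For the submodule assertion the next step is to establish $F_2^{\sigma_2}=f(F_1^{\sigma_1})$. The inclusion $\supset$ is immediate from $f^*\sigma_2=\sigma_1$ together with $f(F_1)=F_2$. For $\subset$ one lifts $u\in F_2^{\sigma_2}$ via surjectivity to some $v\in E_1$, and then $\sigma_2(u,f(w))=0$ for all $w\in F_1$ translates through $f^*\sigma_2=\sigma_1$ into $v\in F_1^{\sigma_1}$. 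Once this equality is in hand, the restrictions $f|_{F_1}\colon F_1\to F_2$ and $f|_{F_1^{\sigma_1}}\colon F_1^{\sigma_1}\to F_2^{\sigma_2}$ are $H$-equivariant surjections that pull back the restricted presymplectic forms, so applying the first part of \eqref{item;surjective} to these two maps yields the remaining isomorphisms.

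For part \eqref{item;sum}, the kernel of a direct sum of presymplectic forms splits as $\ker(\sigma_1\oplus\sigma_2)=\ker(\sigma_1)\oplus\ker(\sigma_2)$, which produces $E^\natural\cong E_1^\natural\oplus E_2^\natural$ at once, and the identical argument applied to $F_1\subset E_1$ and $F_2\subset E_2$ gives $F^\natural\cong F_1^\natural\oplus F_2^\natural$. Finally, for $F=F_1\oplus F_2$ inside $E=E_1\oplus E_2$ equipped with $\sigma=\sigma_1\oplus\sigma_2$, the two summands are $\sigma$-orthogonal, so $F^\sigma=F_1^{\sigma_1}\oplus F_2^{\sigma_2}$; applying the reduction $(\cdot)^\natural$ once more yields the last isomorphism.

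No real obstacle is expected; the author terms the assertion elementary and omits it. The only point that deserves care is the verification of fact (a) and the companion identity $F_2^{\sigma_2}=f(F_1^{\sigma_1})$, after which the remaining claims reduce to bookkeeping with direct sums and the reduction functor $E\mapsto E^\natural$.
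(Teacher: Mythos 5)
Your proof is correct and complete; the paper explicitly omits the proof of this lemma as elementary, and your argument is the natural one it has in mind. The two key identities you isolate --- $\ker(\sigma_1)=f^{-1}(\ker(\sigma_2))$, which gives injectivity of the descended map, and $F_2^{\sigma_2}=f(F_1^{\sigma_1})$, proved by lifting through the surjection --- are exactly the points that need checking, and the reduction of the remaining claims to applications of part (i) to the restricted maps $f|_{F_1}$ and $f|_{F_1^{\sigma_1}}$ and to the splitting $F^\sigma=F_1^{\sigma_1}\oplus F_2^{\sigma_2}$ is sound.
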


\end{alinea}

\begin{alinea}\label{alinea;slice}
Let $x\in X$.  Applying the observations of \ref{alinea;reduction} to
the Lie group $H=G_x$, the $G_x$-module $E=T_xX$, the presymplectic
form $\sigma=\omega_x$, and the submodule $F=T_x(G\cdot x)$, we arrive
at a symplectic $G_x$-module
$$
S_x(X)=\bigl(T_x(G\cdot x)^{\omega_x}\bigr)^\natural=T_x(G\cdot
x)^{\omega_x}\big/\bigl(T_x(G\cdot x)^{\omega_x}\cap T_x(G\cdot
x)+T_x\ca{F})\bigr),
$$
which we call the \emph{symplectic slice} of $X$ at $x$.

\begin{sublemma}\label{lemma;slice-ideal}
The symplectic slice $S_x(X)$ is naturally isomorphic to a submodule
of the module $V_1=T_xX\big/\bigl(T_x(G\cdot x)+T_x\ca{F}\bigr)$ of
Theorem \ref{theorem;transversal}.  The ideal $\g_x\cap\n_x$ of $\g_x$
acts trivially on $S_x(X)$.
\end{sublemma}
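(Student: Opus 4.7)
The plan is to handle the two claims separately. For the embedding $S_x(X) \hookrightarrow V_1$, I would apply the general reduction identity \eqref{equation;reduction} to the presymplectic $G_x$-module $E = T_xX$ with form $\sigma = \omega_x$ (so $\ker(\sigma) = T_x\ca{F}$), and the $G_x$-submodule $F = T_x(G\cdot x)$. This yields, as a canonical $G_x$-equivariant isomorphism,
$$ S_x(X) \;\cong\; \bigl(T_x(G\cdot x) + T_x(G\cdot x)^{\omega_x}\bigr) \Big/ \bigl(T_x(G\cdot x) + T_x\ca{F}\bigr). $$
Since $T_x\ca{F} = \ker(\omega_x)$ is automatically contained in $T_x(G\cdot x)^{\omega_x}$, the numerator on the right is a $G_x$-submodule of $T_xX$ containing the denominator $T_x(G\cdot x) + T_x\ca{F}$, so the quotient embeds naturally as a $G_x$-submodule of $V_1 = T_xX/(T_x(G\cdot x) + T_x\ca{F})$.

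For the triviality assertion, I fix $\xi \in \g_x \cap \n_x$ and will prove that $\xi$ acts as zero on the ambient module $V_1$; the conclusion for $S_x(X)$ then follows from the first part. Because $\xi \in \g_x$, the fundamental vector field $\xi_X$ vanishes at $x$, so its linearisation at $x$ is an intrinsically defined endomorphism of $T_xX$ which, up to sign, equals the infinitesimal action of $\xi$ on $T_xX$. Because $\xi \in \n_x$, there is a $G$-invariant neighbourhood of $x$ on which $\xi_X$ is a smooth section of the subbundle $T\ca{F} \subset TX$. I would then invoke the elementary fact that the derivative at a zero of any smooth local section of a subbundle $\mathcal{D} \subset TX$ takes values in $\mathcal{D}_x$; in a trivialisation of $\mathcal{D}$ this is immediate, since the components of the section transverse to $\mathcal{D}$ are identically zero. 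Applied to $\mathcal{D} = T\ca{F}$, this shows that the linear action of $\xi$ on $T_xX$ has image in $T_x\ca{F}$; in particular it descends to the zero map on $T_xX/T_x\ca{F}$, and \emph{a fortiori} on its further quotient $V_1$.

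Neither step is genuinely delicate: the first is purely formal symplectic linear algebra via \eqref{equation;reduction}, and the second is a one-line linearisation argument. The only place that deserves care is the observation that $\xi \in \n_x$ is a \emph{stalk-level} statement, i.e.\ it really gives $\xi_X \in \Gamma(T\ca{F})$ on some neighbourhood of $x$ rather than only at the point $x$; this is exactly what the sheafification in \ref{alinea;null} provides, and it is what makes the linearisation land in $T_x\ca{F}$.
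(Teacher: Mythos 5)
Your proof is correct. The first half --- the identification $S_x(X)\cong\bigl(T_x(G\cdot x)+T_x(G\cdot x)^{\omega_x}\bigr)\big/\bigl(T_x(G\cdot x)+T_x\ca{F}\bigr)$ via \eqref{equation;reduction} and the resulting embedding into $V_1$ --- is exactly the paper's argument. For the second half you take a genuinely different route. The paper observes that $\Phi^\eta$ is locally constant for $\eta\in\g_x\cap\n_x$, so its Hessian $T_x^2\Phi^\eta$ vanishes; by the equivariant Darboux theorem (Corollary \ref{corollary;linear}) applied to the $G_x$-action, this Hessian is the $\eta$-component of the moment map of the linearized action on $T_xX$, and its vanishing forces the linearization of $\eta_X$ to take values in $\ker(\omega_x)=T_x\ca{F}$, hence to act as zero on $T_xX/T_x\ca{F}$ and therefore on its subquotient $S_x(X)$. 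You reach the same intermediate fact --- the linearization of $\xi_X$ at its zero $x$ has image in $T_x\ca{F}$ --- by the purely differential-geometric observation that the intrinsic derivative at a zero of a section of a subbundle lands in the fibre of that subbundle; this applies because $\xi\in\n_x$ makes $\xi_X$ a section of $T\ca{F}$ on a whole neighbourhood of $x$ (the stalk-level point you rightly single out, which is equally the crux of the paper's argument, where it guarantees that $\Phi^\eta$ is constant \emph{near} $x$ rather than merely critical at $x$). Your version is more elementary and does not consume Corollary \ref{corollary;linear} or any symplectic linear algebra; the paper's version stays inside the moment-map formalism and reuses machinery it has already built. Both conclude by noting that a map acting as zero on $V_1$ (or on $T_xX/T_x\ca{F}$) acts as zero on the submodule (resp.\ subquotient) $S_x(X)$, so both are complete.
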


\begin{proof}
It follows from \eqref{equation;reduction} that
$$
S_x(X)\cong\bigl(T_x(G\cdot x)+T_x(G\cdot
x)^{\omega_x}\bigr)\big/\bigl(T_x(G\cdot x)+T_x\ca{F}\bigr),
$$
which is a submodule of $V_1$.  Let $\eta\in\g_x\cap\n_x$.  Then the
function $\Phi^\eta$ is constant near $x$, and therefore its Hessian
$T_x^2\Phi^\eta\colon T_xX\to\R$ is $0$.  By the equivariant Darboux
theorem, Corollary \ref{corollary;linear}, applied to the fixed point
$x$ of the $G_x$-action, the Hessian $T_x^2\Phi^\eta$ is the
$\eta$-component of the moment map of the linear $G_x$-action on
$T_xX$.  Therefore the linearization at $x$ of the vector field
$\eta_X$ is tangent to the leaves of the constant presymplectic form
$\omega_x$ on $T_xX$.  It follows that $\eta$ acts trivially on the
quotient module $(T_xX)^\natural=T_xX/T_x\ca{F}$.  Hence $\eta$ acts
trivially on the subquotient $S_x(X)$ of $(T_xX)^\natural$.
\end{proof}

Let $\bar{x}=\ca{F}(x)$ be the leaf of $x$, considered as a point in
the leaf space $X/\ca{F}$, and let $G_{\bar{x}}$ the stabilizer of
$\bar{x}$.  The \emph{null slice} of $X$ at $x$ is the $G_x$-module
$$
V_x(X)=\bigl(T_x(G\cdot x)+T_x\ca{F}\bigr)\big/T_x(G\cdot x)\cong
T_x\ca{F}\big/\bigl(T_x(G\cdot x)\cap T_x\ca{F}\bigr)=
T_x\ca{F}/T_x(G_{\bar{x}}\cdot x),
$$
where the last equality follows from \eqref{equation;stabilizer}.
This is the module denoted by $V_0$ in Theorem
\ref{theorem;transversal}.  Note that $V_x(X)=0$ if and only if the
leaf $\ca{F}(x)$ is contained in the $G$-orbit of $x$.  The next
result describes how the symplectic slice and the null slice behave
under presymplectic submersions and under symplectization.

\begin{sublemma}\label{lemma;slice}
Let $x\in X$.
\begin{enumerate}
\item\label{item;transverse-slice}
Let $(Y,\omega_Y)$ be a presymplectic Hamiltonian $G$-manifold and let
$p\colon X\to Y$ be an equivariant submersion with the property
$p^*\omega_Y=\omega$.  Let $y=p(x)$ and assume $G_y=G_x$.  Then $p$
induces an isomorphism of symplectic $G_x$-modules
$$p^\natural\colon S_x(X)\overset\cong\longto S_x(Y)$$
and a short exact sequence of $G_x$-modules
$$
0\longto\ker(T_xp)\longto V_x(X)\overset{T_xp}\longto
V_y(Y)\longto0.
$$
\item\label{item;symplectization-slice}
Let $M=T^*\ca{F}$ be the symplectization of $X$.  Then $V_x(M)=0$ and
there is an isomorphism of symplectic $G_x$-modules 
$$S_x(M)\cong S_x(X)\oplus V_x(X)\oplus V_x(X)^*.$$
In particular, $S_x(M)\cong S_x(X)$ if $\ca{F}(x)\subset G\cdot x$.
\end{enumerate}
\end{sublemma}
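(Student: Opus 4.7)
The plan is to handle the two parts separately: part (i) by a formal manipulation with $T_xp$, and part (ii) by computing $\Omega_x$ explicitly in the splitting $T_xM = T_xX \oplus T_x^*\ca{F}$ afforded by the zero section.

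\medskip

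For part (i), the first observation will be that the identity $(T_xp)^*\omega_{Y,y} = \omega_{X,x}$ together with the surjectivity of $T_xp$ force the equivalence $u \in T_x\ca{F}_X$ if and only if $T_xp(u) \in T_y\ca{F}_Y$. This yields $\ker(T_xp) \subset T_x\ca{F}_X$, the surjection $T_xp\colon T_x\ca{F}_X \to T_y\ca{F}_Y$, and the equality $\g_{\bar x} = \g_{\bar y}$. Combining with the hypothesis $G_y = G_x$ gives $G_x$-equivariant isomorphisms $T_xp\colon T_x(G\cdot x) \longiso T_y(G\cdot y)$ and $T_xp\colon T_x(G_{\bar x}\cdot x) \longiso T_y(G_{\bar y}\cdot y)$. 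Applying Lemma \ref{lemma;natural}\eqref{item;surjective} to $f = T_xp$ with $F_1 = T_x(G\cdot x)$ then produces the isomorphism $p^\natural\colon S_x(X) \to S_y(Y)$. For the null-slice sequence, $T_xp$ induces a surjection $V_x(X) \to V_y(Y)$ whose kernel is the image of $\ker(T_xp)$ in $V_x(X)$, and this image is isomorphic to $\ker(T_xp)$ because $\ker(T_xp) \cap T_x(G_{\bar x}\cdot x) = 0$.

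\medskip

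For part (ii), $V_x(M) = 0$ is immediate, since $\Omega$ is symplectic in a neighbourhood of the zero section. To identify $S_x(M)$, I would compute from $\Omega = \pr^*\omega + j^*\omega_0$ that on $T_xM = T_xX \oplus T_x^*\ca{F}$,
$$
\Omega_x\bigl((u_1,\alpha_1),(u_2,\alpha_2)\bigr) = \omega_x(u_1,u_2) + \alpha_2(\pi_{\ca{F}}(u_1)) - \alpha_1(\pi_{\ca{F}}(u_2)),
$$
where $\pi_{\ca{F}}\colon T_xX \to T_x\ca{F}$ is the orthogonal projection. Writing $K = T_x(G\cdot x)$, $L = T_x\ca{F}$, and $K' = K \cap L = T_x(G_{\bar x}\cdot x)$, short verifications give $K \cap K^\Omega = (K \cap K^\omega) \oplus 0$; that $(u,\alpha) \in K^\Omega$ forces $\alpha \in (K')^\circ$; and that the second-factor projection sends $K^\Omega$ onto $(K')^\circ \cong V_x(X)^*$ with kernel $K^\omega \oplus 0$. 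This produces a $G_x$-equivariant filtration $\tilde V \subset \tilde V^{\bar\Omega} \subset S_x(M)$, where $\tilde V$ is the image of $L \oplus 0$ and $\tilde V^{\bar\Omega}$ the image of $K^\omega \oplus 0$ under the quotient $K^\Omega \to S_x(M)$. By the modular law (using $L \subset K^\omega$) the successive quotients are canonically $V_x(X)$, $S_x(X) = K^\omega/(L + K \cap K^\omega)$, and $V_x(X)^*$, and a direct computation shows that the induced pairing $\bar\Omega\colon \tilde V \otimes \bigl(S_x(M)/\tilde V^{\bar\Omega}\bigr) \to \R$ is the canonical duality pairing.

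\medskip

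The main technical obstacle will be converting this filtration into a symplectic direct-sum decomposition equivariantly. The strategy is: choose a $G_x$-invariant complement $C$ of $\tilde V^{\bar\Omega}$ in $S_x(M)$ (available by averaging, $G_x$ being compact), then correct it by a $G_x$-equivariant linear map $C \to \tilde V$ so that the corrected subspace $\tilde V^*$ is isotropic and remains dual to $\tilde V$ under $\bar\Omega$; such a correction exists because the obstruction lies in a $G_x$-module annihilated by an equivariant splitting of $\bar\Omega|_{C \otimes C}$. The symplectic orthogonal of the resulting hyperbolic pair $\tilde V \oplus \tilde V^*$ is then a $G_x$-invariant symplectic subspace isomorphic to $S_x(X)$, yielding the decomposition $S_x(M) \cong S_x(X) \oplus V_x(X) \oplus V_x(X)^*$. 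The special case $\ca{F}(x) \subset G\cdot x$ reduces to $L \subset K$, whence $V_x(X) = 0$ and $S_x(M) \cong S_x(X)$.
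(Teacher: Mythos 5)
Your proposal is correct, and while part \eqref{item;transverse-slice} essentially reproduces the paper's argument, part \eqref{item;symplectization-slice} takes a genuinely different route. For \eqref{item;transverse-slice} the paper, like you, applies Lemma \ref{lemma;natural}\eqref{item;surjective} to $f=T_xp$ with $F_1=T_x(G\cdot x)$ and reads off the null-slice sequence from the surjection $F_1+T_x\ca{F}\to F_2+T_y\ca{F}_Y$; your additional observations (that $\ker(T_xp)\subset T_x\ca{F}$ and that $\ker(T_xp)\cap T_x(G\cdot x)=0$, so the kernel of $V_x(X)\to V_y(Y)$ really is all of $\ker(T_xp)$) are points the paper leaves implicit. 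For \eqref{item;symplectization-slice} the paper instead uses the invariant metric to write $T_xM$ as a symplectic direct sum $E_2\oplus(E_0\oplus E_0^*)$ with $E_0=T_x\ca{F}$ and $E_2\cong T_xX/T_x\ca{F}$, splits $T_x(G\cdot x)$ as $F_2\oplus F_0$, and invokes the direct-sum clause Lemma \ref{lemma;natural}\eqref{item;sum}. That clause requires $T_x(G\cdot x)$ to be the sum of a subspace of $E_2$ and a subspace of $E_0\oplus E_0^*$, i.e.\ it requires the orthogonal projection of $T_x(G\cdot x)$ onto $T_x\ca{F}$ to land in $T_x(G\cdot x)\cap T_x\ca{F}$; this can fail (already for $T_xX=\R^4$ with $\omega=e_1^*\wedge e_2^*$ and orbit direction $e_1+e_3$), and since the symplectic complement of $E_0\oplus E_0^*$ in $T_xM$ is forced to be $E_2$, no alternative choice of splitting repairs it. Your argument --- the explicit formula for $\Omega_x$ on $T_xX\oplus T_x^*\ca{F}$, the isotropic filtration $\tilde V\subset\tilde V^{\bar\Omega}\subset S_x(M)$ with graded pieces $V_x(X)$, $S_x(X)$, $V_x(X)^*$, the identification of the outer pairing with the duality pairing, and the equivariant Witt-type splitting (averaged complement corrected by half the skew pairing) --- needs no such compatibility, so it is the more robust of the two at exactly the delicate point; what it costs is the extra linear algebra required to promote an equivariant isotropic filtration to an equivariant symplectic splitting, which the paper's intended direct sum would have delivered for free. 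All the individual computations you cite ($K\cap K^\Omega=(K\cap K^\omega)\oplus 0$, the surjection of $K^\Omega$ onto $(K')^\circ$ with kernel $K^\omega\oplus 0$, and $\tilde V^{\bar\Omega}$ being the image of $K^\omega\oplus 0$) check out against your formula for $\Omega_x$.
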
  

\begin{proof}
\eqref{item;transverse-slice}~Let $H=G_x=G_y$, $E_1=T_xX$, and
$E_2=T_yY$.  Then $E_1$ and $E_2$ are presymplectic $H$-modules with
presymplectic forms $\sigma_1=\omega_x$,
resp.\ $\sigma_2=\omega_{Y,y}$.  The tangent spaces to the orbits
$F_1=T_x(G\cdot x)$ and $F_2=T_y(G\cdot y)$ are submodules of $E_1$,
resp.\ $E_2$.  The tangent map $p_*=T_xp\colon E_1\to E_2$ satisfies
$p^*\sigma_2=\sigma_1$ and $p_*(F_1)=F_2$.  By definition the
symplectic slices are $S_x(X)=(F^{\sigma_1})^\natural$ and
$S_y(Y)=(F^{\sigma_2})^\natural$.  The statement that the two are
isomorphic now follows from the third isomorphism in Lemma
\ref{lemma;natural}\eqref{item;surjective}.  The restriction of $p_*$
to the subspace $F_1+T_x\ca{F}$ has image $F_2+T_y\ca{F}_Y$.  Hence
$p_*$ descends to a surjection from $V_x(X)=(F_1+T_x\ca{F})/F_1$ to
$V_y(Y)=(F_2+T_y\ca{F}_Y)/F_2$ with kernel $\ker(p_*)$.

\eqref{item;symplectization-slice}~Since $M$ is symplectic, we have
$V_x(M)=0$.  Let $E_1=T_xX$ and $E_2=E_1^\natural$.  On $E_1$ we have
the presymplectic form $\sigma_1=\omega_x$ and on $E_2$ we have the
symplectic form $\sigma_2=\sigma_1^\natural$.  Let $\pi\colon E_1\to
E_2$ be the quotient map and $E_0=\ker(\pi)$ its kernel.  Then
$E_0=\ker(\sigma_1)$ and $\pi^*\sigma_2=\sigma_1$.  Let
$F_1=T_x(G\cdot x)\cong\g/\g_x$, let $F_2=\pi(F_1)\subset E_2$ be the
image of $F_1$, and let $F_0=F_1\cap E_0$ be the kernel of $\pi\colon
F_1\to F_2$.  It follows from \eqref{equation;stabilizer} that
$$
F_0=T_x(G\cdot x)\cap T_x\ca{F}=T_x(G_{\bar{x}}\cdot
x)\cong\g_{\bar{x}}/\g_x.
$$  
Therefore $F_2=F_1/F_0\cong\g/\g_{\bar{x}}$.  From the third
isomorphism in Lemma \ref{lemma;natural}\eqref{item;surjective} we
obtain
\begin{equation}\label{equation;slice}
S_x(X)=\bigl(F_1^{\sigma_1}\bigr)^\natural\cong
\bigl(F_2^{\sigma_2}\bigr)^\natural=
F_2^{\sigma_2}\big/\bigl(F_2^{\sigma_2}\cap F_2\bigr).
\end{equation}
We can express the relationships among the various presymplectic
$G_x$-modules as a commutative diagram with exact rows:
\begin{equation}\label{equation;slices}
\vcenter{\xymatrix@H=1.5em{
0\ar[r]&\g_{\bar{x}}/\g_x\ar[r]\ar[d]_{\cong}&\g/\g_x\ar[r]\ar[d]_{\cong}&
\g/\g_{\bar{x}}\ar[r]\ar[d]_{\cong}&0
\\
0\ar[r]&F_0\ar[r]\ar@{^{(}->}[d]&F_1\ar[r]\ar@{^{(}->}[d]&
F_2\ar[r]\ar@{^{(}->}[d]&0
\\
0\ar[r]&E_0\ar[r]&E_1\ar[r]&E_2\ar[r]&0
}}
\end{equation}
Recall from \ref{alinea;coisotropic} that the symplectic form $\Omega$
on $M=T^*\ca{F}$ depends on a choice of a $G$-invariant Riemannian
metric on $X$.  Given such a choice, we obtain compatible splittings
of the rows of the diagram \eqref{equation;slices}, namely by
identifying the middle terms with orthogonal direct sums $E_1\cong
E_2\oplus E_0$ and $F_1\cong F_2\oplus F_0$.  Let $E=T_xM$ and
$\sigma=\Omega_x$.  Then $E$ is a direct sum, $E=E_2\oplus E_0\oplus
E_0^*$, and the symplectic form on $E$ is
$\sigma=\sigma_2\oplus\sigma_0$, where $\sigma_0$ is the standard
symplectic form on $E_0\oplus E_0^*$.  Applying Lemma
\ref{lemma;natural}\eqref{item;sum} to the subspace $F_1=F_2\oplus
F_0$ of $E$ we obtain
\begin{equation}\label{equation;bigslice}
S_x(M)=F_2^{\sigma_2}\big/\bigl(F_2^{\sigma_2}\cap F_2\bigr)\oplus
F_0^{\sigma_0}\big/\bigl(F_0^{\sigma_0}\cap F_0\bigr).
\end{equation}
Now $F_0^{\sigma_0}=E_0\oplus F_0^\circ$, where $F_0^\circ$ is the
annihilator of $F_0$ in $E_0^*$, so $F_0^{\sigma_0}\cap F_0=F_0$ and
$$
F_0^{\sigma_0}\big/\bigl(F_0^{\sigma_0}\cap F_0\bigr)=E_0/F_0\oplus
F_0^\circ=V\oplus V^*,
$$
where $V=V_x(X)$.  Substituting this and \eqref{equation;slice} into
\eqref{equation;bigslice} gives $S_x(M)\cong S_x(X)\oplus V\oplus
V^*$.  If $\ca{F}(x)\subset G\cdot x$, then $V=0$, so $S_x(M)\cong
S_x(X)$.
\end{proof}

\end{alinea}  

\begin{alinea}\label{alinea;model}
We now describe the local model for clean presymplectic Hamiltonian
actions.  First a quick review of the symplectic case.  (See
\cite[\S\,41]{guillemin-sternberg;techniques;;1990} or
\cite{marle;modele-action} for a complete exposition.)  The symplectic
local model has a list of four ingredients $(\lambda,H,\theta,S)$,
consisting of
\begin{enumerate}
\renewcommand\theenumi{\arabic{enumi}}
\item\label{item;covector}
a covector $\lambda\in\g^*$,
\item\label{item;subgroup}
a closed subgroup $H$ of the coadjoint stabilizer $G_\lambda$ of
$\lambda$,
\item\label{item;splitting}
an $H$-equivariant splitting
$\theta\colon\g_\lambda/\h\to\g_\lambda$ of the quotient map
$\g_\lambda\to\g_\lambda/\h$,
\item\label{item;slice}
a finite-dimensional symplectic $H$-module $S$.
\end{enumerate}
We denote the $H$-module $\g_\lambda/\h$ by $\m$.  We use the
splitting $\theta$ to identify the $H$-module $\m$ with a direct
summand of $\g_\lambda$ and the $H$-module $\h^*$ with a direct
summand of $\g_\lambda^*$.  The homogeneous bundle
\begin{equation}\label{equation;symplectic-model}
\lie{M}=\lie{M}(\lambda,H,\theta,S)=G\times^H(\m^*\times S)
\end{equation}
carries a closed $2$-form $\omega_{\lie{M}}$ which is nondegenerate in
a neighbourhood of the zero section.  The left multiplication action
of $G$ on $\lie{M}$ is Hamiltonian with moment map $\Phi_{\lie{M}}$
given by
$$\Phi_{\lie{M}}([g,a,s])=\Ad_g^*\bigl(\lambda+a+\Phi_S(s)\bigr),$$
where $\Phi_S^\eta(s)=\frac12\omega_S(\eta_S(s),s)$ for $\eta\in\h$.
The formula for $\Phi_{\lie{M}}$ is to be interpreted as follows.  The
inclusion $\g_\lambda\to\g$ has a unique $G_\lambda$-equivariant left
inverse, which we use to identify $\g_\lambda^*$ with a direct summand
of $\g^*$.  This allows us to identify $\g^*$ with a product
\begin{equation}\label{equation;gstar}
\g^*\cong\g_\lambda^\circ\times\g_\lambda^*\cong
\g_\lambda^\circ\times\m^*\times\h^*,
\end{equation}
and to regard $a\in\m^*$ and $\Phi_S(s)\in\h^*$ as elements of $\g^*$.
Then for $g\in G$ we let $\Ad_g^*$ act on the element
$\lambda+a+\Phi_S(s)\in\g^*$.

The presymplectic local model requires six ingredients
$(\lambda,H,\theta,S,V,\a)$, where $\lambda$, $H$, $\theta$, $S$ are
as in \eqref{item;covector}--\eqref{item;slice} and in addition we
have
\begin{enumerate}
\renewcommand\theenumi{\arabic{enumi}}
\addtocounter{enumi}{4}
\item\label{item;module}
an $H$-module $V$,
\item\label{item;ideal}
an ideal $\a$ of $\g$ with the properties that
$\lie{k}\subset\a\subset\g_\lambda$ and that the ideal $\a\cap\h$ of
$\h$ acts trivially on $S$.
\end{enumerate}
Here $\lie{k}$ denotes the kernel of the infinitesimal $G$-action
$\g\to\Gamma(T\lie{M})$ on the symplectic Hamiltonian $G$-manifold
$\lie{M}$ defined in \eqref{equation;symplectic-model}.  The ideal
$\lie{k}$ of $\g$ is determined by the data $\lambda$, $H$ and $S$.
The quotient $\p=(\a+\h)/\h\cong\a/(\a\cap\h)$ is an $H$-submodule of
$\m=\g_\lambda/\h$.  We require the splitting
$\theta\colon\m\to\g_\lambda$ to be \emph{compatible} with the ideal
$\a$ in the sense that $\theta(\p)$ is contained in $\a$.  Because the
$H$-action on $\g_\lambda$ preserves the ideal $\a$, such a splitting
always exists.  We form the quotient module $\q=\m/\p$ and the
homogeneous vector bundle
$$
\lie{X}=\lie{X}(\lambda,H,\theta,S,V,\a)=G\times^H(\q^*\times S\times
V).
$$
We define an equivariant vector bundle map $f\colon\lie{X}\to\lie{M}$
by
$$f([g,b,s,v])=[g,i(b),s],$$
where $i\colon\q^*\to\m^*$ is the natural inclusion.  Let $\lie{Y}$ be
the direct summand
$$\lie{Y}=\lie{X}(\lambda,H,\theta,S,0,\a)=G\times^H(\q^*\times S)$$
of the vector bundle $\lie{X}$.  Then $f=j\circ p$, where
$$
\lie{X}\overset{p}\longto\lie{Y}\overset{j}\longto\lie{M}
$$
are defined by $p([g,b,s,v])=[g,b,s]$ and $j([g,b,s])=[g,i(b),s]$.  We
write $\omega_{\lie{X}}=f^*\omega_{\lie{M}}$,
$\Phi_{\lie{X}}=f^*\Phi_{\lie{M}}$,
$\omega_{\lie{Y}}=j^*\omega_{\lie{M}}$, and
$\Phi_{\lie{Y}}=j^*\Phi_{\lie{M}}$.  Note that $j$ is an equivariant
embedding of the vector bundle $\lie{Y}$.  We identify $\lie{Y}$ with
the subbundle $j(\lie{Y})$ of $\lie{M}$.  Let $x_0\in\lie{X}$ denote
the basepoint $[1,0,0,0]$ and let $A$ be the connected immersed normal
subgroup of $G$ generated by the ideal $\a$.  Here are the relevant
properties of the model $\lie{X}$.

\begin{sublemma}\label{lemma;model}
\begin{enumerate}
\item\label{item;model-coisotropic}
$\Phi_{\lie{M}}\colon\lie{M}\to\g^*$ intersects the affine subspace
  $\lambda+\a^\circ$ cleanly and
  $\lie{Y}=\Phi_{\lie{M}}^{-1}(\lambda+\a^\circ)$.  Hence near the
  zero section $\lie{Y}$ is a coisotropic submanifold of $\lie{M}$ and
  $\lie{M}$ is the symplectization of $\lie{Y}$.  The leaves of the
  null foliation of $\omega_{\lie{Y}}$ are the orbits of the
  $A$-action on $\lie{Y}$.
\item\label{item;model-presymplectic}
Near the zero section $\lie{X}$ is a presymplectic Hamiltonian
$G$-manifold with presymplectic form $\omega_{\lie{X}}$ and moment map
$\Phi_{\lie{X}}$.  The $G$-action on $\lie{X}$ is clean at $x_0$.
Near the zero section the null ideal sheaf $\tilde{\n}_{\lie{X}}$ is
constant with stalk $\a$.
\item\label{item;model-stabilizer}
The stabilizer of the basepoint is $G_{x_0}=H$, its moment map value
is $\Phi_{\lie{X}}(x_0)=\lambda$, the symplectic slice is
$S_{x_0}(\lie{X})\cong S$, and the null slice is
$V_{x_0}(\lie{X})\cong V$.
\end{enumerate}  
\end{sublemma}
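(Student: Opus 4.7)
The plan is to prove part \eqref{item;model-coisotropic} directly from the explicit formula for $\Phi_{\lie{M}}$, and then bootstrap parts \eqref{item;model-presymplectic} and \eqref{item;model-stabilizer} from the submersion $p\colon\lie{X}\to\lie{Y}$ by appealing to Proposition \ref{proposition;coisotropic} and Lemma \ref{lemma;slice}.

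For \eqref{item;model-coisotropic} I would first use the compatibility of $\theta$ with $\a$ to obtain the $H$-module decomposition $\a=(\a\cap\h)\oplus\theta(\p)$; the hypothesis that $\a\cap\h$ acts trivially on $S$ then forces $\Phi_S$ to take values in $(\a\cap\h)^\circ\subset\h^*$. Since $\a\subset\g_\lambda$, the affine subspace $\lambda+\a^\circ$ is coadjoint-invariant by Lemma \ref{lemma;coadjoint}, so the condition $\Phi_{\lie{M}}([g,a,s])\in\lambda+\a^\circ$ reduces to $a+\Phi_S(s)\in\a^\circ$. Pairing with a general $\xi_\h+\theta(\bar\xi)\in(\a\cap\h)\oplus\theta(\p)$ under the identification \eqref{equation;gstar} gives $\inner{a,\bar\xi}+\inner{\Phi_S(s),\xi_\h}=0$; the second summand vanishes automatically and the first forces $a\in\p^\circ\cap\m^*=\q^*$, so that $\Phi_{\lie{M}}^{-1}(\lambda+\a^\circ)=\lie{Y}$. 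For cleanness of the intersection I would compute $T_{[1,0,0]}\Phi_{\lie{M}}$ explicitly and verify $(T\Phi_{\lie{M}})^{-1}(\a^\circ)=T\lie{Y}$ at the basepoint; $G$-equivariance covers the rest of the zero section and the implicit function theorem propagates cleanness to a neighbourhood. Proposition \ref{proposition;coisotropic} then gives coisotropy of $\lie{Y}$, leafwise transitivity, $\n(\lie{Y})=\a$, and the description of its leaves as $A$-orbits. Finally, $\lie{M}\to\lie{Y}$ is a $G$-equivariant vector bundle of rank $\codim_{\lie{M}}\lie{Y}=\dim\p$ equal to the corank of $\omega_{\lie{Y}}$, so the equivariant version of Gotay's coisotropic embedding theorem identifies $\lie{M}$ near $\lie{Y}$ with the symplectization $T^*\ca{F}_{\lie{Y}}$.

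For \eqref{item;model-presymplectic}, the factorization $f=j\circ p$ gives $\omega_{\lie{X}}=p^*\omega_{\lie{Y}}$ and $\Phi_{\lie{X}}=p^*\Phi_{\lie{Y}}$; since $p$ is a submersion, $\omega_{\lie{X}}$ has constant rank near the zero section and $\lie{X}$ is presymplectic Hamiltonian as in Theorem \ref{theorem;transversal}\eqref{item;restrict}. Writing $y_0=[1,0,0]$, the relation $p^*\omega_{\lie{Y}}=\omega_{\lie{X}}$ yields $T_{x_0}\ca{F}_{\lie{X}}=(T_{x_0}p)^{-1}(T_{y_0}\ca{F}_{\lie{Y}})=V\oplus\p$ in the decomposition $T_{x_0}\lie{X}=T_{y_0}\lie{Y}\oplus V$. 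Since $T_{x_0}(G\cdot x_0)=\g/\h\supset\p$, we get $T_{x_0}(G\cdot x_0)\cap T_{x_0}\ca{F}_{\lie{X}}=\p=T_{x_0}(A\cdot x_0)$. For $\xi\in\g$, the fundamental vector field $\xi_{\lie{X}}$ is tangent to $\ca{F}_{\lie{X}}$ near $x_0$ iff its $p$-projection is tangent to $\ca{F}_{\lie{Y}}$ near $y_0$, iff $\xi\in\n_{\lie{Y},y_0}=\a$; hence $\n_{\lie{X},x_0}=\a$, $N_{\lie{X},x_0}=A$, and cleanness at $x_0$ follows.

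For \eqref{item;model-stabilizer}, $G_{x_0}=H$ and $\Phi_{\lie{X}}(x_0)=\lambda$ are immediate from the definitions. Lemma \ref{lemma;slice}\eqref{item;transverse-slice} applied to $p$ (with $G_{x_0}=G_{y_0}=H$) yields $S_{x_0}(\lie{X})\cong S_{y_0}(\lie{Y})$ and a short exact sequence $0\to V\to V_{x_0}(\lie{X})\to V_{y_0}(\lie{Y})\to0$; since $T_{y_0}\ca{F}_{\lie{Y}}=\p\subset T_{y_0}(G\cdot y_0)$, we have $V_{y_0}(\lie{Y})=0$ and hence $V_{x_0}(\lie{X})\cong V$. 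Applying Lemma \ref{lemma;slice}\eqref{item;symplectization-slice} to $\lie{Y}$ and its symplectization $\lie{M}$ from \eqref{item;model-coisotropic}, and using $V_{y_0}(\lie{Y})=0$, gives $S_{y_0}(\lie{Y})\cong S_{y_0}(\lie{M})$, and $S_{y_0}(\lie{M})\cong S$ is the defining property of the symplectic Marle--Guillemin--Sternberg model. The main obstacle I expect is the clean-intersection claim in \eqref{item;model-coisotropic}: beyond the set-theoretic preimage identity, verifying $(T\Phi_{\lie{M}})^{-1}(\a^\circ)=T\lie{Y}$ at the basepoint requires a derivative computation that genuinely uses all of $\theta(\p)\subset\a$, the triviality of the $\a\cap\h$-action on $S$, the identification \eqref{equation;gstar}, and the coadjoint-orbit symplectic structure on the tangent directions complementary to $\g_\lambda/\h$.
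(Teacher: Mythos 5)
Your proposal is essentially the paper's proof: reduce part (i) to the fibrewise condition $a+\Phi_S(s)\in\a^\circ$, use the triviality of the $\a\cap\h$-action on $S$ to force $\Phi_S$ into $(\a\cap\h)^\circ$ so that the condition becomes $a\in\q^*$, invoke Proposition \ref{proposition;coisotropic} for coisotropy, leafwise transitivity and $\n(\lie{Y})=\a$, transport everything to $\lie{X}$ along the submersion $p$ for part (ii), and read off part (iii) from Lemma \ref{lemma;slice} together with $V_{y_0}(\lie{Y})=0$. The one step where you diverge is precisely the one you flag as the expected obstacle, the clean-intersection claim, and there the paper's argument is both simpler and stronger than what you propose. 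Writing $\Phi_{\lie{M}}([g,a,s])=\Ad_g^*(\lambda+\phi(a,s))$ with $\phi(a,s)=a+\Phi_S(s)$, the key observation is that $\Phi_S$ takes values in the \emph{fixed linear subspace} $(\a\cap\h)^\circ$ of $\h^*$, hence so does its derivative $T_s\Phi_S$ at \emph{every} $s\in S$; therefore $(T_{(a,s)}\phi)^{-1}(\a^\circ)=\q^*\times S$ at every point of $\phi^{-1}(\a^\circ)$, and cleanness holds globally with no basepoint computation and no propagation argument. Your alternative (verify the tangent condition on the zero section, then let the ``implicit function theorem'' propagate it) does yield the statement near the zero section, which is all that is ultimately used, but as phrased it has a soft spot: cleanness is not an open condition on a level set, so what you actually need is lower semicontinuity of $\rank(\pi\circ T_z\Phi_{\lie{M}})$, where $\pi\colon\g^*\to\g^*/\a^\circ$, combined with the a priori upper bound $\rank\le\dim\p$ at all $z\in\lie{Y}$ (which holds because $T_z\lie{Y}$ is already known to lie in the kernel). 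With that half of the argument supplied your route is valid; the rest of the proposal, including the appeal to Gotay's theorem for the symplectization claim (which the paper leaves implicit) and the direct derivation of the constancy of $\tilde{\n}_{\lie{X}}$ from $p$-relatedness of the fundamental vector fields, is correct and matches the paper.
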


\begin{proof}
\eqref{item;model-coisotropic}~Let $[g,a,s]\in\lie{M}$.  Writing
$\Phi_{\lie{M}}([g,a,s])=\Ad_g^*(\lambda+\phi(a,s))$, where $\phi$ is
the $H$-equivariant map $\m^*\times S\to\g^*$ defined by $(a,s)\mapsto
a+\Phi_S(s)$, we have
$$
\Phi_{\lie{M}}([g,a,s])\in\lambda+\a^\circ\iff\phi(a,s)\in\a^\circ.
$$
Under the identification \eqref{equation;gstar} we have
$\a^\circ\cong\g_\lambda^\circ\times\q^*\times(\a\cap\h)^\circ$, where
$\g_\lambda^\circ$ denotes the annihilator of $\g_\lambda$ in $\g^*$
and $(\a\cap\h)^\circ$ the annihilator of $\a\cap\h$ in $\h^*$.  By
assumption $\a\cap\h$ acts trivially on $S$, so the $H$-moment map
$\Phi_S$ maps $S$ into $(\a\cap\h)^\circ$.  Therefore
$\phi(a,s)\in\a^\circ$ is equivalent to $a\in\q^*$,
i.e.\ $\phi^{-1}(\a^\circ)=\q^*\times S$.  Therefore
$\lie{Y}=G\cdot\phi^{-1}(\a^\circ)=\Phi_{\lie{M}}^{-1}(\lambda+\a^\circ)$.
Moreover, $\phi$ intersects the linear subspace $\a^\circ$ cleanly,
which implies that $\Phi_{\lie{M}}$ intersects $\lambda+\a^\circ$
cleanly.  The remaining assertions now follow from Proposition
\ref{proposition;coisotropic}.

\eqref{item;model-presymplectic}~The first assertion follows from
\eqref{item;model-coisotropic} and the fact that
$p\colon\lie{X}\to\lie{Y}$ is an equivariant surjective submersion.
By Proposition \ref{proposition;coisotropic} the action on $\lie{Y}$
is leafwise transitive and the null ideal is $\n(\lie{Y})=\a$.
Therefore the action on $\lie{X}$ is clean at $x_0$ and, by Corollary
\ref{corollary;clean}, the sheaf $\n_{\lie{X}}$ is constant near $x_0$
with stalk $\a$.

\eqref{item;model-stabilizer}~The space $\lie{X}$ is a homogeneous
bundle over $G/H$ and the basepoint $x_0$ is the identity coset in
$G/H$, so its stabilizer is $G_{x_0}=H$.  We have
$\Phi_{\lie{X}}(x_0)=\Phi_{\lie{M}}([1,0,0])=\lambda$.  The
equivariant surjection $p$ induces an isomorphism
$S_{x_0}(\lie{X})\cong S_{x_0}(\lie{Y})$ by Lemma
\ref{lemma;slice}\eqref{item;transverse-slice}.  The fact that the
action on $\lie{Y}$ is leafwise transitive implies
$V_{x_0}(\lie{Y})=0$, and hence $S_{x_0}(\lie{Y})\cong
S_{x_0}(\lie{M})\cong S$ by Lemma
\ref{lemma;slice}\eqref{item;symplectization-slice}.  This shows
$S_{x_0}(\lie{X})\cong S$.  Moreover,
$V_{x_0}(\lie{X})\cong\ker(T_{x_0}p)=V$ by Lemma
\ref{lemma;slice}\eqref{item;transverse-slice}.
\end{proof}

\end{alinea}

\begin{alinea}\label{alinea;darboux}
The local normal form theorem is as follows.

\begin{subtheorem}\label{theorem;normal}
Let $x\in X$ and assume that the $G$-action on $X$ is clean at $x$.
Let
$$
\lambda=\Phi(x),\quad H=G_x,\quad S=S_x(X),\quad V=V_x(X),\quad
\a=\n_x.
$$
Choose an $H$-equivariant splitting
$\theta\colon\g_\lambda/\h\to\g_\lambda$ of the quotient map
$\g_\lambda\to\g_\lambda/\h$ which is compatible with $\a$.  Then a
$G$-invariant neighbourhood of $x$ in $X$ is isomorphic as a
presymplectic Hamiltonian $G$-manifold to a $G$-invariant
neighbourhood of $x_0$ in the local model
$\lie{X}=\lie{X}(\lambda,H,\theta,S,V,\a)$.
\end{subtheorem}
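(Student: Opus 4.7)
\emph{Plan.} The strategy is to reduce to the classical symplectic local normal form of Guillemin--Sternberg and Marle by first slicing to a leafwise transitive transversal and then symplectizing.

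First I would apply Theorem \ref{theorem;transversal} at $x$ to obtain a $G$-invariant tubular neighbourhood $U = \chi(E)$ of $x$ together with a $G$-invariant presymplectic transversal $Y = \chi(E_1)$ through $x$ on which the $G$-action is leafwise transitive and whose null ideal sheaf is constant with stalk $\a = \n_x$. The projection $p \colon U \to Y$ is a $G$-equivariant vector bundle with fibre $V_0 \cong V_x(X) = V$, and satisfies $p^*\omega_Y = \omega|_U$ and $p^*\Phi_Y = \Phi|_U$. Since both the presymplectic form and the moment map on $U$ are pulled back from $Y$, and since the model $\lie{X} = G \times^H (\q^* \times S \times V)$ is by construction pulled back from $\lie{Y} = G \times^H (\q^* \times S)$ via the analogous projection $\lie{X} \to \lie{Y}$, it suffices to produce an equivariant presymplectic isomorphism between neighbourhoods of $x \in Y$ and $x_0 \in \lie{Y}$ and then cover it by an equivariant isomorphism of the two homogeneous $V$-bundles; such a cover exists because both bundles have the same fibre $H$-module $V$.

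To identify $Y$ with $\lie{Y}$, I would symplectize: let $M = T^*\ca{F}_Y$ as in \ref{alinea;coisotropic}, with symplectic moment map $\Psi$. By Proposition \ref{proposition;symplectization}, $Y = \Psi^{-1}(\lambda + \a^\circ)$ inside $M$ and this intersection is clean. Applying the classical symplectic Marle--Guillemin--Sternberg normal form to $(M, x)$ with the given splitting $\theta$ produces a $G$-equivariant symplectic, moment-map-preserving isomorphism between a neighbourhood of $x$ in $M$ and a neighbourhood of the basepoint of $\lie{M}(\lambda, H, \theta, S_x(M))$. I must identify $S_x(M)$ with $S = S_x(X)$: leafwise transitivity at $x$ forces $V_x(Y) = 0$, so Lemma \ref{lemma;slice}\eqref{item;symplectization-slice} gives $S_x(M) \cong S_x(Y)$, and then Lemma \ref{lemma;slice}\eqref{item;transverse-slice} applied to $p$ yields $S_x(Y) \cong S_x(U) = S_x(X) = S$. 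Under the resulting symplectic isomorphism, the coisotropic submanifold $Y = \Psi^{-1}(\lambda + \a^\circ)$ corresponds to $\lie{Y} = \Phi_{\lie{M}}^{-1}(\lambda + \a^\circ)$ by Lemma \ref{lemma;model}\eqref{item;model-coisotropic}, so restriction gives the desired equivariant presymplectic isomorphism $Y \cong \lie{Y}$ near the basepoints.

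The technical heart of the argument is this slice identification, and the role of cleanness is precisely that Theorem \ref{theorem;transversal} then produces a leafwise transitive transversal $Y$ with null ideal $\a$, which forces $V_x(Y) = 0$ and reduces $S_x(M)$ to $S$; without cleanness, $S_x(M)$ would acquire extraneous summands $V_x(Y) \oplus V_x(Y)^*$ that are not accommodated by the model. Along the way one must also verify the compatibility conditions on the ingredients of \ref{alinea;model}: $H = G_x \subset G_\lambda$ by equivariance of $\Phi$; $\a \subset \g_\lambda$ because elements of $\n_x$ have locally constant moment map component at $x$; $\lie{k} \subset \a$ because the kernel of the $G$-action on $\lie{M}$ is contained in the null ideal of $Y$, which equals $\a$ by Theorem \ref{theorem;transversal}\eqref{item;null}; and $\a \cap \h$ acts trivially on $S$ by Lemma \ref{lemma;slice-ideal}. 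Once these checks are in place, the reduction to the symplectic Marle--Guillemin--Sternberg theorem is complete, and the remaining extension of $Y \cong \lie{Y}$ to $U \cong \lie{X}$ by an equivariant bundle cover is routine, as both presymplectic Hamiltonian structures are pulled back from the respective bases.
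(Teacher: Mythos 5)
Your proposal is correct and follows essentially the same route as the paper's proof: verify the compatibility of the data $(\lambda,H,\theta,S,V,\a)$, pass to the leafwise transitive transversal $Y$ of Theorem \ref{theorem;transversal}, symplectize and invoke the Marle--Guillemin--Sternberg normal form together with the slice identifications of Lemmas \ref{lemma;slice} and \ref{lemma;model}, then restrict to $\Psi^{-1}(\lambda+\a^\circ)$ and lift back through the $V$-bundle. Your explicit remark that the final extension from $Y\cong\lie{Y}$ to $U\cong\lie{X}$ needs an equivariant identification of the two homogeneous $V$-bundles is a point the paper leaves implicit, but the argument is the same.
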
  

\begin{proof}
First we verify that the list $(\lambda,H,\theta,S,V,\a)$ satisfies
the conditions imposed in
\ref{alinea;model}\eqref{item;covector}--\eqref{item;ideal}.  That the
subgroup $H$ is contained in $G_\lambda$ follows from the equivariance
of the moment map $\Phi$.  That the null ideal $\a$ contains $\lie{k}$
follows from \eqref{equation;null}.  That $\a$ is contained in
$\g_\lambda$ follows from Lemma \ref{lemma;coadjoint}.  That
$\a\cap\h$ acts trivially on the symplectic slice module $S$ is Lemma
\ref{lemma;slice-ideal}.  Now choose a leafwise transitive transversal
$Y$ at $x$ as in Theorem \ref{theorem;transversal} and let $M$ be the
symplectization of $Y$.  Let us denote by $\gamma(X)$ and $\gamma(M)$
the germs of $X$ and $M$ at the orbit $G\cdot x$.  Similarly, let us
denote by $\gamma(\lie{X})$ and $\gamma(\lie{M})$ the germs of
$\lie{X}$ and $\lie{M}=\lie{M}(\lambda,H,S)$ at the orbit $G\cdot
x_0$.  Then
$$S_x(M)\cong S_x(Y)\cong S_x(X)=S\cong S_{x_0}(\lie{M}),$$
where the first two isomorphisms follow from Lemma \ref{lemma;slice}
and the last from Lemma \ref{lemma;model}.  It now follows from the
symplectic local normal form theorem (see
\cite[\S\,41]{guillemin-sternberg;techniques;;1990} or
\cite{marle;modele-action}) that $\gamma(M)$ and $\gamma(\lie{M})$ are
isomorphic as germs of symplectic Hamiltonian $G$-manifolds.
Isomorphisms intertwine moment maps, so from Proposition
\ref{proposition;symplectization} we get that
$\gamma(Y)=\gamma\bigl(\Psi^{-1}(\lambda+\a^\circ)\bigr)$ is
isomorphic to
$$
\gamma\bigl(\Phi_{\lie{M}}^{-1}(\lambda+\a^\circ)\bigr)=
\gamma(\lie{Y}).
$$
Theorem \ref{theorem;transversal} states that $\gamma(X)$ is
isomorphic to the equivariant bundle over $\gamma(Y)$ with fibre $V$,
that is to say the bundle $\gamma\bigl(G\times^H(\q^*\times S\times
V)\bigr)=\gamma(\lie{X})$.
\end{proof}  

\end{alinea}

%%%%%%%%%%%%%%%%%%%%%%%%%%%%%%%%%%%%%%%%%%%%%%%%%%%%%%%%%%%%%%%%%%%%%%%%

\bibliographystyle{amsplain}

\def\cprime{$'$}
\providecommand{\bysame}{\leavevmode\hbox to3em{\hrulefill}\thinspace}
\providecommand{\MR}{\relax\ifhmode\unskip\space\fi MR }
% \MRhref is called by the amsart/book/proc definition of \MR.
\providecommand{\MRhref}[2]{%
  \href{http://www.ams.org/mathscinet-getitem?mr=#1}{#2}
}
\providecommand{\href}[2]{#2}

%%%%%%%%%%%%%%%%%%%%%%%%%%%%%%%%%%%%%%%%%%%%%%%%%%%%%%%%%%%%%%%%%%%%%%%%

\end{document}